\newtheorem{twr}{Theorem}[section]
\newtheorem{lem}[twr]{Lemma}
\newtheorem{exam}[twr]{Example}
\newtheorem{cor}[twr]{Corollary}
\theoremstyle{definition}
\newtheorem{remark}[twr]{Remark}
\DeclareMathOperator{\conv}{conv}
\DeclareMathOperator{\bd}{bd}
\DeclareMathOperator{\lin}{lin}
\DeclareMathOperator{\tr}{Tr}
\DeclareMathOperator{\inte}{int}
\newcommand\R{\mathbb{R}}
\newcommand\B{\mathbb{B}}
\newcommand\N{\mathbb{N}}
\newcommand\CC{\mathcal{C}}
\newcommand\CP{\mathcal{P}}
\newcommand\CU{\mathcal{U}}
\newcommand\CY{\mathcal{Y}}
\newcommand\CZ{\mathcal{Z}}
\newcommand\ms[1]{\mathcal{M}_s^{#1}}
\def\ds{1.5/\scale pt}
\newcommand\cemph[1]{\emph{\color{red}#1}}
\newcommand\extrafootertext[1]{%
    \bgroup
    \renewcommand\thefootnote{\fnsymbol{footnote}}%
    \renewcommand\thempfootnote{\fnsymbol{mpfootnote}}%
    \footnotetext[0]{#1}%
    \egroup
}
\providecommand{\keywords}[1]{%
\extrafootertext{\textit{Key words and phrases.} #1.}
}
\providecommand{\subjclass}[1]{%
\extrafootertext{2020 \textit{Mathematics Subject Classification.} #1.}
}
\begin{document}

\title{On Certain Extremal Banach-Mazur Distances and Ader's Characterization of Distance Ellipsoids}
\author{Florian Grundbacher \MakeLowercase{and} Tomasz Kobos}
\subjclass{Primary 52A40; Secondary 46B20, 52A21}
\keywords{Banach-Mazur distance, John ellipsoid, Distance ellipsoid, 1-Symmetric convex body}

\maketitle

\begin{abstract}
A classical consequence of the John Ellipsoid Theorem is the upper bound $\sqrt{n}$ on the Banach-Mazur distance between the Euclidean ball and any symmetric convex body in $\mathbb{R}^n$. Equality is attained for the parallelotope and the cross-polytope. While it is known that they are unique with this property for $n=2$ but not for $n \geq 4$, no proof of the characterization of the three-dimensional equality case seems to have ever been published. We fill this gap by showing that the parallelotope and the cross-polytope are the unique maximizers for $n=3$. Our proof is based on an extension of a characterization of distance ellipsoids due to Ader from $1938$, which predates the John Ellipsoid Theorem. Ader's characterization turns out to provide a decomposition similar to the John decomposition, which leads to a proof of the aforementioned $\sqrt{n}$ estimate that bypasses the concept of volumes and reveals precise information about the equality case. We highlight further consequences of Ader's characterization, including a proof of an unpublished result attributed to Maurey related to the uniqueness of distance ellipsoids. Additionally, we investigate more closely the role of the parallelogram as a maximizer in various problems related to the distance between planar symmetric convex bodies. We establish the stability of the parallelogram as the unique planar symmetric convex body with the maximal distance to the Euclidean disc with the best possible (linear) order. This uniqueness extends to the setting of pairs of planar $1$-symmetric convex bodies, where we show that the maximal possible distance between them is again $\sqrt{2}$, together with a characterization of the equality case involving the parallelogram.
\end{abstract}

\maketitle

\section{Introduction}

A \cemph{convex body} in $\R^n$ is a compact, convex subset with non-empty interior. It is called \cemph{(origin) symmetric} if it has a center of symmetry (at the origin). By $\| \cdot \|$ we shall understand the \cemph{Euclidean norm} on $\R^n$ with associated \cemph{Euclidean unit ball} $\B^n$ and \cemph{standard inner product} $\langle \cdot, \cdot \rangle$. The norm induced by an origin symmetric convex body $K \subseteq \R^n$ is denoted by $\|\cdot\|_K$. For two convex bodies $K, L \subseteq \R^n$, their \cemph{Banach-Mazur distance} is defined as 
\begin{equation}
\label{eq:dbm_def}
d_{BM}(K,L) = \inf \{\rho >0 : K+ u \subseteq T(L+v) \subseteq \rho (K+u) \},
\end{equation}
where the infimum is taken over all invertible linear operators $T : \R^n \to \R^n$ and vectors $u, v \in \R^n$. Here, $X + v = v + X = \{ x + v : x \in X \}$ and $\rho X = \{ \rho x : x \in X \}$ denote the \cemph{$v$-translation} and \cemph{$\rho$-dilatation} of $X \subseteq \R^n$ for $v \in \R^n$ and $\rho \in \R$, respectively. We write shortly $(-1) X = - X$.

The infimum in \eqref{eq:dbm_def} is in fact a minimum, and the vectors $u,v$ can be omitted if $K$ and $L$ are origin symmetric, i.e., the minimum occurs for $u = v = 0$. The Banach-Mazur distance can also be defined for any two normed spaces $X, Y$ of the same dimension as $\inf \|T\|_{X \to Y} \cdot \|T^{-1}\|_{Y \to X}$, where the infimum (actually a minimum) is taken over all invertible linear operators $T: X \to Y$ and the considered norms are the respective operator norms. The unit ball of an $n$-dimensional real normed space is an origin symmetric convex body in $\R^n$ and, vice versa, every origin symmetric convex body is the unit ball of exactly one norm on $\R^n$. It can be easily checked that the Banach-Mazur distance between two normed spaces is the distance between their unit balls, so these two approaches lead to the same notion. It is more convenient for us to use the language of symmetric convex bodies in the present paper, but all of our results could be immediately translated to the language of real normed spaces of a given dimension as well.

Perhaps the most well-known estimate of the Banach-Mazur distance follows from the famous John Ellipsoid Theorem. In $1948$, John published his seminal paper ``Extremum problems with inequalities as subsidiary conditions''~\cite{john}, where he characterized the unique maximal volume ellipsoid contained in a given convex body (see also \cite[Theorem~$15.3$]{tomczak}). We write $\bd(X)$ for the \cemph{boundary} of a set $X \subseteq \R^n$.

\begin{twr}[John Ellipsoid Theorem for symmetric convex bodies]
\label{thm:john}
Let $K \subseteq \R^n$ be an origin symmetric convex body such that $\B^n \subseteq K$. Then $\B^n$ is the unique ellipsoid of maximal volume contained in $K$ if and only if there exist contact points $u^1, \ldots, u^N \in \bd(K) \cap \bd(\B^n)$ and weights $\lambda_1, \ldots, \lambda_N>0$ such that for any $x \in \R^n$ we have
$$ x = \sum_{i=1}^{N} \lambda_i \langle x, u^i \rangle u^i.$$
In this case, $\sum_{i=1}^N \lambda_i = n$ and there exists a choice with $N \leq \frac{n(n+1)}{2}$.
\end{twr}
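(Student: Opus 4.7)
For the \emph{sufficiency} direction, my plan is to use the decomposition to constrain any ellipsoid inscribed in $K$. By symmetrizing via $E \mapsto \tfrac{1}{2}(E - E)$, which preserves volume of ellipsoids and remains inside $K$ when $K$ is symmetric, it suffices to consider centered ellipsoids $E = T(\B^n)$ with $T$ positive definite symmetric. Since $\B^n \subseteq K$ and each $u^i \in \bd(K) \cap \bd(\B^n)$, the unique tangent hyperplane $\{x : \langle x, u^i\rangle = 1\}$ of $\B^n$ at $u^i$ also supports $K$ there, so $T(\B^n) \subseteq K$ forces $\|Tu^i\| = \sup_{y \in T(\B^n)} \langle y, u^i\rangle \le 1$. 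Rewriting the decomposition in matrix form as $\sum_i \lambda_i u^i (u^i)^T = I$ and taking the trace against $T^2$ yields $\tr(T^2) = \sum_i \lambda_i \|Tu^i\|^2 \le \sum_i \lambda_i$, while taking the trace against the identity gives $\sum_i \lambda_i = n$. AM-GM on the eigenvalues of $T^2$ then delivers $\det(T)^{2/n} \le \tr(T^2)/n \le 1$, with equality forcing all eigenvalues of $T^2$ to equal $1$, hence $T = I$.

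For the \emph{necessity} direction, I would argue contrapositively via Hahn-Banach separation in the $n(n+1)/2$-dimensional real vector space of symmetric $n \times n$ matrices. If no such decomposition exists, then $I$ lies outside the closed convex cone generated by the rank-one matrices $\{uu^T : u \in \bd(K) \cap \bd(\B^n)\}$, so there is a symmetric matrix $A$ with $\tr(A) > 0$ and $u^T A u \le 0$ at every contact point. Replacing $A$ by $A - \eta I$ for a small $\eta > 0$ preserves positivity of the trace while sharpening the contact-point bound to $\le -\eta < 0$, and by continuity and compactness there is an open neighborhood $V$ of the contact set in $S^{n-1}$ on which $w^T(A - \eta I) w \le -\eta/2$.

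The main obstacle, and the technical heart of the argument, is to upgrade this linear algebra into a strictly larger ellipsoid inside $K$. I would consider $E_\varepsilon = (I + \varepsilon(A - \eta I))^{1/2}(\B^n)$, well-defined for small $\varepsilon > 0$, whose volume $\mathrm{vol}(\B^n)\sqrt{\det(I + \varepsilon(A - \eta I))}$ strictly exceeds $\mathrm{vol}(\B^n)$ since $\tr(A - \eta I) > 0$, and whose radial function in direction $w \in S^{n-1}$ equals $1 + \tfrac{\varepsilon}{2} w^T(A - \eta I) w + O(\varepsilon^2)$ uniformly in $w$. On $V$ this is strictly below $1$ for small $\varepsilon$, so $E_\varepsilon$ stays inside $\B^n \subseteq K$ there; on the compact complement $S^{n-1} \setminus V$ the radial function of $K$ is bounded below by some value strictly greater than $1$, so the $O(\varepsilon)$ perturbation is absorbed. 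Thus $E_\varepsilon \subseteq K$ with $\mathrm{vol}(E_\varepsilon) > \mathrm{vol}(\B^n)$, contradicting the uniqueness of $\B^n$. Finally, the bound $N \le n(n+1)/2$ is immediate from the conical Carathéodory theorem in the $n(n+1)/2$-dimensional space of symmetric matrices: since $I$ lies in the conic hull of $\{uu^T : u \in \bd(K) \cap \bd(\B^n)\}$, it lies in the conic hull of some linearly independent, hence at most $n(n+1)/2$-element, subfamily.
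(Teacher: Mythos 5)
The paper does not actually prove Theorem~\ref{thm:john}: it quotes it as a classical result, citing \cite{john} and \cite[Theorem~$15.3$]{tomczak}, so there is no in-paper argument to compare against. Your proof is the standard one (symmetrization plus the trace/AM--GM computation for sufficiency; separation of $I_n$ from the conic hull of the contact matrices $uu^T$ in $\ms{n}$, followed by a first-order perturbation of $\B^n$, for necessity), and the main steps are sound: the identity $\sup_{y \in T(\B^n)} \langle y, u^i \rangle = \|Tu^i\|$ for symmetric positive definite $T$, the strict separation (which requires the conic hull to be closed --- true here because the contact set is compact and every $uu^T$ has trace $1$, hence the generators stay away from $0$; worth saying explicitly), the uniform expansion $\rho_{E_\varepsilon}(w) = 1 + \frac{\varepsilon}{2} w^T(A - \eta I_n)w + O(\varepsilon^2)$ of the radial function, and the conical Carath\'eodory bound $N \leq \frac{n(n+1)}{2}$. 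The one point you must tighten is uniqueness among \emph{non-centered} ellipsoids in the sufficiency direction: your symmetrization shows that any maximal-volume inscribed ellipsoid is a translate $c + \B^n$, but you never rule out $c \neq 0$. This is a one-line fix: $c + \B^n \subseteq K \subseteq \{x : \langle x, u^i \rangle \leq 1\}$ forces $\langle c, u^i \rangle \leq 0$, the origin symmetry of $K$ gives the reverse inequality, and the vectors $u^i$ span $\R^n$ because $\sum_i \lambda_i u^i (u^i)^T = I_n$ is invertible, so $c = 0$. With that addition the argument is complete.
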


By a \cemph{John decomposition} we understand a decomposition of the form $x = \sum_{i=1}^{N} \lambda_i \langle x, u^i \rangle u^i$ as in the above theorem. An easy and well-known corollary of this result is that $K \subseteq \sqrt{n} \B^n$ if $K$ is an origin symmetric convex body for which the above conditions apply. Indeed, for any $x \in K$ we have
\begin{equation}
\label{eq:john_ineq}
\|x\|^2 = \langle x, x \rangle = \sum_{i=1}^N \lambda_i \langle x, u^i \rangle^2 \leq \sum_{i=1}^N \lambda_i = n,
\end{equation}
where we used that $| \langle x, u^i \rangle | \leq 1$ for all $i \in \{1, \ldots, N\}$, as $K$ and $\B^n$ share a common supporting hyperplane at the contact point $u^i \in \bd(K) \cap \bd(\B^n)$ that is perpendicular to $u^i$. In the language of the Banach-Mazur distance, this reads as $d_{BM}(K, \B^n) \leq \sqrt{n}$.
\newpage

This estimate plays a fundamental role in the study of Banach-Mazur distances and has profound implications in the local theory of Banach spaces. In particular, combined with a simple application of the triangle inequality, it implies that $d_{BM}(K, L) \leq n$ for any symmetric convex bodies $K, L \subseteq \R^n$. Much later, it turned out that this upper bound on the diameter of the symmetric Banach-Mazur compactum is of the right order, as a highly influential random construction of Gluskin~\cite{gluskin} provides random symmetric polytopes in $\R^n$ with a Banach-Mazur distance of at least $cn$, for some absolute constant $c>0$.

Given the fundamental role played by the estimate $d_{BM}(K, \B^n) \leq \sqrt{n}$, it comes as no surprise that characterizing the equality case of this inequality has gained considerable attention and that there are several papers on this topic (see for example \cite{tomczakstructure,lewis,milmanwolfson} and \cite[Chapter~$7$]{tomczak}). It is worth noting here that if $K \subseteq \R^n$ is a general convex body (not necessarily symmetric), then the John Ellipsoid Theorem for the general case easily implies that $d_{BM}(K, \B^n) \leq n$. It was proved by Leichtweiss already in $1959$~\cite{leichtweiss} (and rediscovered later by Palmon~\cite{palmon}) that equality holds in this estimate if and only if $K$ is a simplex. Moreover, the stability of the simplex as the unique convex body with the maximal Banach-Mazur distance to the Euclidean ball has been established in \cite{kobos}. Thus, the non-symmetric case has a clear equality condition, but the symmetric one turns out to be much less straightforward.

The characterization of the equality case in the estimate $d_{BM}(K, \B^n) \leq \sqrt{n}$ for a symmetric convex body $K \subseteq \R^n$ is complicated by the fact that if $n \geq 3$, there are at least two affinely non-equivalent symmetric convex bodies for which equality is achieved, namely $\CP^n = [-1,1]^n$, the $n$-dimensional \cemph{unit cube}, and its dual $\CC^n = (\CP^n)^\circ$, the $n$-dimensional \cemph{unit cross-polytope}. Here, we denote by $X^\circ = \{ y \in \R^n : \langle x, y \rangle \leq 1 \text{ for all } x \in X$\} the \cemph{polar} of a set $X \subseteq \R^n$. More generally, it is easy to see that if $K \subseteq \R^n$ is a symmetric convex body such that the maximal volume ellipsoid contained in $K$ (which we shortly call the \cemph{John ellipsoid} of $K$) and the minimal volume ellipsoid containing $K$ (which we refer to as the \cemph{Loewner ellipsoid} of $K$ for traditional reasons) are homothetic with ratio $\sqrt{n}$, then $d_{BM}(K, \B^n)=\sqrt{n}$. It turns out that in general, $\CP^n$ and $\CC^n$ are not the only symmetric convex bodies with the distance $\sqrt{n}$ to $\B^n$ and such constructions are widely known. For all dimensions $n \geq 4$, there exist infinitely many affinely non-equivalent symmetric convex bodies $K \subseteq \R^n$ such that $d_{BM}(K, \B^n)=\sqrt{n}$ (see Remark~\ref{rem:higher_dim} for details).

This leaves the cases of dimensions two and three, for which it is surprisingly hard to find any information in the literature. Let us provide some historical background to put our work in context. It turns out that the only maximizers in dimensions two and three are the obvious ones: it is only the parallelogram in dimension two, whereas there are only the parallelotope and the cross-polytope in dimension three. The former characterization can be traced back to independent works of John~\cite{johninert} from $1936$ and Behrend~\cite{behrend} from $1937$. Both proved that $d_{BM}(K, \B^2) \leq d_{BM}(\CP^2, \B^2) = \sqrt{2}$ for any symmetric convex body $K \subseteq \R^2$, with equality if and only if $K$ is a parallelogram. Neither used the language of Banach-Mazur distances and their proofs are somewhat convoluted from a modern point of view. Their works predate the John Ellipsoid Theorem, though Behrend essentially established the existence and uniqueness of the John ellipse in the symmetric planar case. Much later, in $1979$, Lewis~\cite{lewis} noted that the two-dimensional case is also a consequence of a more general result about Banach ideal norms (see Remark after Theorem~$2.2$ there). He attributed this observation to Figiel and Davis, seemingly unaware of the previous works of John and Behrend. The three discussed papers appear to be the only ones where an argument for the two-dimensional case is given.

To the best of our knowledge, no proof of the characterization of the three-dimensional equality case has ever been published. The only mention of the equality case in more recent literature appears to be the paper \cite{tomczakstructure} of Anisca, Tcaciuc, and Tomczak-Jaegermann from $2005$. They state that some experts in the field were aware of its characterization at the time, but no proof or reference is provided. They further point to a result reportedly proved by Maurey, which in particular implies that a symmetric convex body $K$ with $d_{BM}(K,\B^n) = \sqrt{n}$ in any dimension $n$ has a unique distance ellipsoid (see Theorem~\ref{thm:maurey} below for details). With such a result at hand, it would be quite simple to prove that the parallelotope and the cross-polytope are indeed the only symmetric convex bodies in $\R^3$ with the distance $\sqrt{3}$ to $\B^3$. We note that generally, the distance ellipsoid does not have to be unique (see \cite[Lemma~$2.2$]{praetorius}), but it is always unique in the planar case (see Corollary~\ref{cor:unique} below for details). The result of Maurey has been mentioned also in \cite[Remark~$1.2$]{arias} and \cite{praetorius}, but to the best of our best knowledge, no proof has been published until this day.

Surprisingly, the problem of characterizing the three-dimensional equality case was considered already in $1938$ in a largely forgotten paper by Ader \cite{ader}, which again predates the John Ellipsoid Theorem. Ader, who was a student of John in Kentucky, established the inequality $d_{BM}(K, \B^3) \leq \sqrt{3}$ for any three-dimensional symmetric convex body $K$. He noted that his method of proof would lead to a characterization of the cube and the cross-polytope as the only convex bodies for which equality occurs, but did not provide any further details. Ader's paper contains the remarkable condition~(ii) below, describing ellipsoids that realize the Banach-Mazur distance to a given symmetric convex body in terms of the contact points of the boundaries. He established this condition only in dimension three, seemingly relying on the dimension for geometric arguments in some parts of his proof, but his ideas are straightforward to extend to general dimension $n$. Moreover, it is immediate to see from a modern point of view that this characterization can be stated in the form of a decomposition on the contact points, akin to the John decomposition. The theorem below provides the extended result of Ader. We denote by $\ms{n}$ the $\frac{n(n+1)}{2}$-dimensional linear space of \cemph{real symmetric $n \times n$ matrices}.

\begin{twr}[Ader's characterization of distance ellipsoids]
\label{thm:ader_cond}
Let $K \subseteq \R^n$ be an origin symmetric convex body and let $R \geq r > 0$ be such that $r \B^n \subseteq K \subseteq R \B^n$. Then the following are equivalent:
\begin{enumerate}[(i)]
\item $d_{BM}(K,\B^n)=\frac{R}{r}$.
\item For every $A \in \ms{n}$, there exist an outer contact point $y \in \bd(K) \cap \bd(R \B^n)$ and an inner contact point $z \in \bd(K) \cap \bd(r \B^n)$ such that $\left \langle \frac{y}{R}, A \frac{y}{R} \rangle \geq \langle \frac{z}{r}, A \frac{z}{r} \right  \rangle$.
\item There exist integers $N, M \geq 1$, outer contact points $y^1, \ldots, y^N \in \bd(K) \cap \bd(R\B^n)$, inner contact points $z^1, \ldots, z^M \in \bd(K) \cap \bd(r\B^n)$, as well as weights $\lambda_1, \ldots, \lambda_N$, $\mu_1, \ldots, \mu_M > 0$, such that for any $x \in \R^n$ we have
$$ \sum_{i=1}^N \lambda_i \langle x, y^i \rangle y^i = \sum_{i=1}^M \mu_i \langle x, z^i \rangle z^i. $$
\end{enumerate}
In this case, $R^2 \sum_{i=1}^N \lambda_i = r^2 \sum_{i=1}^M \mu_i$ and there exists a choice with $N+M \leq \frac{n(n+1)}{2} + 1$.
\end{twr}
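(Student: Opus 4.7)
The plan is to establish the cycle (iii) $\Rightarrow$ (i) $\Rightarrow$ (ii) $\Rightarrow$ (iii). The trace identity $R^2 \sum_i \lambda_i = r^2 \sum_j \mu_j$ is immediate from taking traces of both sides of the decomposition and using $\|y^i\| = R$, $\|z^j\| = r$.

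For (iii) $\Rightarrow$ (i), I would mimic the John argument in \eqref{eq:john_ineq}. The inequality $d_{BM}(K, \B^n) \leq R/r$ is immediate from $r\B^n \subseteq K \subseteq R\B^n$. For the reverse, given any invertible $T$, conjugating the matrix identity $\sum_i \lambda_i y^i (y^i)^T = \sum_j \mu_j z^j (z^j)^T$ by $T$ and taking traces yields $\sum_i \lambda_i \|Ty^i\|^2 = \sum_j \mu_j \|Tz^j\|^2$. Bounding $\|Ty^i\|^2 \leq \max_{x \in K}\|Tx\|^2$ on the left and $\|Tz^j\|^2 \geq \min_{x \in \bd(K)}\|Tx\|^2$ on the right and combining with the trace identity gives $\max_{x \in K}\|Tx\| / \min_{x \in \bd(K)}\|Tx\| \geq R/r$, hence $d_{BM}(K, \B^n) \geq R/r$.

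For (i) $\Rightarrow$ (ii), I would argue contrapositively via a first-order perturbation. If (ii) fails for some $A \in \ms{n}$, then $\max_y \langle y/R, Ay/R\rangle < \min_z \langle z/r, Az/r\rangle$ over the respective contact points. Consider the family of ellipsoids defined by the quadratic forms $\langle (I + \epsilon A)x, x\rangle \leq 1$, and set $R_\epsilon = \max_{x \in K}\|(I + \epsilon A)^{-1/2}x\|$, $r_\epsilon = \min_{x \in \bd(K)}\|(I + \epsilon A)^{-1/2}x\|$. A Danskin-type computation of the one-sided derivatives of these max/min quantities gives
\[
\frac{d}{d\epsilon}\bigg|_{\epsilon = 0^+} \frac{R_\epsilon^2}{r_\epsilon^2} = \frac{R^2}{r^2}\Big[\max_z \langle z/r, Az/r\rangle - \min_y \langle y/R, Ay/R\rangle\Big],
\]
which becomes strictly negative after replacing $A$ by $-A$. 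Hence $R_\epsilon/r_\epsilon < R/r$ for small $\epsilon > 0$, contradicting (i). This perturbation step is the main technical obstacle, since $R_\epsilon$ and $r_\epsilon$ are only one-sided differentiable in general, and careful bookkeeping of the sign of $A$ is needed to link the one-sided derivative to the symmetric form of (ii).

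For (ii) $\Rightarrow$ (iii), I would recast (ii) as a convex-separation statement in $\ms{n}$. The inequality $\langle y/R, Ay/R\rangle \geq \langle z/r, Az/r\rangle$ reads $\tr(A(P - Q)) \geq 0$ with $P = yy^T/R^2$ and $Q = zz^T/r^2$. Setting $\CY = \{yy^T/R^2 : y \in \bd(K) \cap \bd(R\B^n)\}$ and $\CZ = \{zz^T/r^2 : z \in \bd(K) \cap \bd(r\B^n)\}$, (ii) precludes strict separation of $0$ from $\conv(\CY) - \conv(\CZ)$ by any hyperplane, so $0 \in \conv(\CY) - \conv(\CZ)$. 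Any common point of $\conv(\CY)$ and $\conv(\CZ)$ produces the decomposition of (iii) after rescaling. For the bound $N + M \leq \frac{n(n+1)}{2} + 1$, interpret the decomposition as a positive linear relation among the elements of $\CY \cup (-\CZ)$ in $\ms{n}$; since $\dim \ms{n} = \frac{n(n+1)}{2}$, any such relation with more than $\frac{n(n+1)}{2} + 1$ terms admits an auxiliary linear dependence, and a standard Carathéodory-style elimination removes one term while preserving positivity. Neither $N$ nor $M$ can drop to zero during this process, since any positive combination of elements of $\CY$ (or of $\CZ$) has strictly positive trace.
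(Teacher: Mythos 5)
Your proposal is correct, and while two of its three implications track the paper closely, the overall architecture differs in a worthwhile way. The separation argument for (ii) $\Rightarrow$ (iii) (non-separability of $\conv(\CY)$ and $\conv(\CZ)$ in $\ms{n}$ via the Frobenius pairing) and the perturbation argument linking (i) and (ii) are essentially the paper's; the paper runs the perturbation forward as a direct proof of (i) $\Rightarrow$ (ii) with an explicit $\varepsilon \to 0$ compactness argument on the extremizers $y^\varepsilon, z^\varepsilon$, whereas you run it contrapositively via a Danskin-type one-sided derivative --- your sign bookkeeping (replacing $A$ by $-A$) is in fact consistent with your $(I+\epsilon A)^{-1/2}$ convention, so this is fine, though you should state Danskin's theorem for one-sided derivatives of parametric maxima/minima explicitly if you write this up. The genuine divergence is how you close the cycle: the paper proves (iii) $\Rightarrow$ (ii) by reversing the separation steps and (ii) $\Rightarrow$ (i) by exhibiting, from a better ellipsoid $E \subseteq K \subseteq dE$, a matrix $A = S - \frac{1}{r^2}I_n$ violating (ii); you instead prove (iii) $\Rightarrow$ (i) directly by conjugating the matrix identity by an arbitrary invertible $T$ and taking traces, which is the exact analogue of the classical John-decomposition computation \eqref{eq:john_ineq} and arguably the more illuminating route (it makes transparent why an Ader decomposition certifies optimality against \emph{every} competing ellipsoid). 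Finally, for the cardinality bound you replace the paper's appeal to Kirchberger's theorem (applied relative to the trace-one affine subspace) by a Carath\'eodory-type elimination on the positive linear relation among $\CY \cup (-\CZ)$; both yield $N+M \leq \frac{n(n+1)}{2}+1$, and your observation that the trace obstruction prevents $N$ or $M$ from vanishing during the elimination correctly handles the one point where that approach could fail. Your elimination is more self-contained, while the paper's Kirchberger route packages the same reduction into a citable classical theorem.
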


It is hard to believe that such a clear characterization of distance ellipsoids never gained any prominence and remained unnoticed for so many years. This is even more surprising when one considers that, according to the historical account \cite{kuhn}, John was actually more interested in the estimate $d_{BM}(K, \B^n) \leq \sqrt{n}$ for symmetric convex bodies $K \subseteq \R^n$ itself than in the ellipsoid of maximal volume and was strongly influenced by the earlier work of his student Ader. It should be emphasized that Ader worked exclusively with distance ellipsoids in his paper and never considered ellipsoids maximizing or minimizing the volume. As we demonstrate in Corollary~\ref{cor:estimate}, the decomposition form of Ader's condition also yields a proof of the $\sqrt{n}$~estimate. Although the alternative proof is less immediate than the classical one based on John decompositions \eqref{eq:john_ineq}, it can still be considered fairly simple. Interestingly, this shows that the optimal upper bound on the order of the diameter of the symmetric Banach-Mazur compactum can be obtained entirely without mention of the notion of volumes.

While the decomposition due to John became highly popular (his paper has been cited around $2000$ times), Ader's paper barely gets any mention in the published literature. It appears that, despite a substantial body of research focusing on the Banach-Mazur distance to the Euclidean ball, the condition of Ader was never recognized and it was also not rediscovered independently. The only closely related result we could find is the proof of \cite[Theorem~$2.1$]{lewis} in the previously mentioned paper of Lewis, which we discuss in detail in Remark~\ref{rem:general} below. To honor Ader's insights, which were truly ahead of their time, we call any decomposition as in Theorem~\ref{thm:ader_cond}~(iii) an \cemph{Ader decomposition}.

Our aim in the first part of this paper is to bring attention to Ader's forgotten result and to draw some consequences from it. We shall hereby focus largely on results concerning extreme situations for the distance to the Euclidean ball. The full proof of Theorem~\ref{thm:ader_cond} and its consequences are presented in Section~\ref{sec:ader}. This section also serves the purpose of clarifying and systematizing certain knowledge about the Banach-Mazur distance to the Euclidean ball, including proofs of some results that have never been published before. In particular, we present the aforementioned alternative proof of the inequality $d_{BM}(K, \B^n) \leq \sqrt{n}$ for symmetric convex bodies $K \subseteq \R^n$, which allows us to easily characterize the equality case in dimensions two and three (Theorem~\ref{thm:eqcase}). Moreover, we combine the Ader decomposition with additional insights about means of ellipsoids (Lemma~\ref{lem:mean_ellpsoid}) to obtain a proof of the unpublished result of Maurey related to the uniqueness of distance ellipsoids (Theorem~\ref{thm:maurey}).

In the second part of this paper, we focus more closely on the role of the parallelogram as a unique maximizer in several problems concerning the Banach-Mazur distance between certain classes of planar symmetric convex bodies. Most prominently and as discussed above, the parallelogram is the only planar symmetric convex body with distance $\sqrt{2}$ to the Euclidean disc. In Section~\ref{sec:2d}, we provide the following stability improvement of this characterization.

\begin{twr}
\label{thm:stability}
Let $K \subseteq \R^2$ be a symmetric convex body with $d_{BM}(K,\B^2) \geq \sqrt{2} - \varepsilon$ for some $\varepsilon > 0$. Then
$$ d_{BM}(K,\CP^2) < 1 + c \varepsilon, $$
where $c = 5 \sqrt{2} \approx 7.071$.
\end{twr}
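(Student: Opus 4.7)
The plan is to apply Ader's Theorem~\ref{thm:ader_cond} in the plane to produce an inscribed parallelogram $P_{\mathrm{in}}\subseteq K$ and a circumscribing parallelogram $P_{\mathrm{out}}\supseteq K$, and then to show that these two parallelograms are close in the Banach--Mazur sense. After an affine normalization, I may assume $\B^2\subseteq K\subseteq R\B^2$ with $R=d_{BM}(K,\B^2)\geq \sqrt{2}-\varepsilon$, and (setting the trivial case $K=\B^2$ aside) that the Ader decomposition has $N=M=2$: both sides of the identity $\sum_i a_i u^i(u^i)^T=\sum_j b_j v^j(v^j)^T$ must have rank~$2$, which combined with $N+M\leq 4$ forces exactly two linearly independent outer contact unit vectors $u^1,u^2\in\bd\B^2$ and two linearly independent inner contact unit vectors $v^1,v^2\in\bd K\cap\bd\B^2$. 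I then set $P_{\mathrm{in}}=\conv\{\pm Ru^1,\pm Ru^2\}$, contained in~$K$ by convexity, and $P_{\mathrm{out}}=\{x:|\langle x,v^j\rangle|\leq 1,\ j=1,2\}$, containing~$K$ since the tangent hyperplane of~$\B^2$ at each $v^j$ must support~$K$.

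The key geometric input is the four pointwise inequalities $|\langle u^i,v^j\rangle|\leq 1/R$, which follow from $Ru^i\in K\subseteq P_{\mathrm{out}}$. In the extremal case $R=\sqrt{2}$ these are all tight at $\pm 1/\sqrt{2}$ and force $\{u^1,u^2\}$ and $\{v^1,v^2\}$ to be two orthonormal bases of~$\R^2$ differing by a rotation of~$\pi/4$, so that $P_{\mathrm{in}}=P_{\mathrm{out}}=K$ is a parallelogram. For $R\geq\sqrt{2}-\varepsilon$ one has $1/R\leq 1/\sqrt{2}+O(\varepsilon)$, so each inner product deviates from its extremal value by at most~$O(\varepsilon)$; combined with additional relations extracted from the Ader matrix identity (its determinant component yields $a_1a_2\sin^2\phi=b_1b_2\sin^2(2\gamma)$, where $\phi$ and $2\gamma$ are the angles between $u^1,u^2$ and between $v^1,v^2$, while its off-diagonal component links the global rotation between the two pairs) this should pin down the configuration to within~$O(\varepsilon)$ of its extremal form.

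To conclude I use $d_{BM}(K,\CP^2)\leq d_{BM}(K,P_{\mathrm{in}})\leq \sup_{x\in K}\|x\|_{P_{\mathrm{in}}}$, so it suffices (since $K\subseteq P_{\mathrm{out}}$) to show $\|w\|_{P_{\mathrm{in}}}\leq 1+c\varepsilon$ at each of the four vertices~$w$ of~$P_{\mathrm{out}}$. Parametrizing $u^1=e_1$, $u^2=(\cos\phi,\sin\phi)$, $v^j=(\cos\beta_j,\sin\beta_j)$ with $\mu=(\beta_1+\beta_2)/2$ and $\gamma=(\beta_2-\beta_1)/2$, the vertices of $P_{\mathrm{out}}$ take the explicit form $\pm(\cos\mu,\sin\mu)/\cos\gamma$ and $\pm(\sin\mu,-\cos\mu)/\sin\gamma$, and $\|w\|_{P_{\mathrm{in}}}$ reduces to a concrete trigonometric ratio in $\phi,\mu,\gamma$ (for instance, $(|\sin(\phi-\mu)|+|\sin\mu|)/(R\cos\gamma\sin\phi)$ for the first vertex). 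The main obstacle will be obtaining \emph{linear} stability. The naive trace consequence $\sin^2(2\gamma)\geq 2-2/R^2$ only yields $|\gamma-\pi/4|=O(\sqrt{\varepsilon})$, and similarly for $\phi$, which after substitution into the Minkowski functional would give merely $O(\sqrt{\varepsilon})$ stability. Obtaining the theorem's $1+c\varepsilon$ bound requires exploiting the four pointwise inequalities $|\langle u^i,v^j\rangle|\leq 1/R$ individually (rather than through their sum) and the first-order relations linking the deviations of $\phi$, $\mu$, and $\gamma$ via the determinant and off-diagonal components of the matrix identity, so that the apparent $O(\sqrt{\varepsilon})$ terms in the trigonometric expression cancel; the worst-case combination of the remaining contributions should then produce the stated constant $c=5\sqrt{2}$.
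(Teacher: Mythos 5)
Your setup is sound as far as it goes: in the plane the Ader decomposition (Theorem~\ref{thm:ader_cond}) does reduce to $N=M=2$ for non-ellipsoidal $K$, the parallelograms $P_{\mathrm{in}}\subseteq K\subseteq P_{\mathrm{out}}$ are correctly constructed, the four inequalities $|\langle u^i,v^j\rangle|\leq 1/R$ are valid, and the identification of the extremal configuration is right. But the proof stops exactly where the theorem begins. The entire quantitative content --- that the configuration is within $O(\varepsilon)$ of extremal \emph{linearly}, and that the worst case yields the constant $5\sqrt{2}$ --- is asserted with ``should'' rather than proved. Worse, your own diagnosis shows why the tools you have assembled are insufficient: the trace/determinant consequences of the matrix identity (equivalently, the Frobenius-norm argument from Corollary~\ref{cor:estimate}) give only $\|A-\tfrac12 I_2\|_F=O(\sqrt{\varepsilon})$, hence $O(\sqrt{\varepsilon})$ control on the angles. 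The pointwise bounds $|\langle u^i,v^j\rangle|\leq 1/R$ do rescue linearity \emph{when $v^1\perp v^2$} (since then the two squared inner products sum to $1$), but once the angle between $v^1$ and $v^2$ deviates by $\delta$ from $\pi/2$, that Pythagorean identity degrades by $O(\delta)$ and the constraint becomes vacuous; and $\delta$ itself is only controlled at order $\sqrt{\varepsilon}$ by the matrix identity. The claimed ``first-order cancellation'' is therefore not a routine computation but the actual heart of the matter, and no mechanism for it is exhibited. A decomposition of the form (iii) is only a first-order optimality certificate, so it is unsurprising that it yields second-order ($\sqrt{\varepsilon}$) information about perturbations; some genuinely variational input is needed.

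For comparison, the paper obtains linear stability by a different route: after normalizing so that $\B^2$ is the John ellipse, Lemma~\ref{lem:orth} converts the existence of a single far point $v\in K$ into the inclusion $K\subseteq\frac{\sqrt2}{r}\CC^2$, and then an explicit \emph{competitor ellipse} $T(\B^2)$ (with carefully tuned semi-axes $\alpha,\beta$) is used to show that $K$ must also contain a point $w$ with $w_1\geq x(r)$ --- otherwise $T$ would improve the distance to $\B^2$, a contradiction. This competitor argument is what upgrades the $O(\sqrt{\varepsilon})$ information to a linear-order lower bound on an inscribed cross-polytope, and the constant $5\sqrt2$ then comes out of the technical polynomial inequality of Lemma~\ref{lem:stab_poly_ineq}. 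Your outline contains no analogue of this step. Two smaller omissions: you do not treat the regime where $\varepsilon$ is not small (your near-extremal perturbation analysis says nothing there; the paper invokes Stromquist's bound $d_{BM}(K,\CP^2)\leq\tfrac32$ for $\varepsilon>\tfrac{\sqrt2}{20}$), and the final claim that the worst case produces \emph{exactly} $c=5\sqrt2$ is unsupported --- that constant is an artifact of a specific construction, not something your setup predicts.
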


Clearly, for any symmetric convex body $K \subseteq \R^2$ with $d_{BM}(K, \B^2) = \sqrt{2} - \varepsilon$, we have by the triangle inequality $d_{BM}(K, \CP^2) \geq \frac{\sqrt{2}}{\sqrt{2}-\varepsilon} \geq 1 + \frac{\varepsilon}{\sqrt{2}}$, so the linear order in the above estimate is optimal. As a corollary, we obtain that the symmetric Banach-Mazur compactum can be covered by two balls with radius $\frac{11 \sqrt{2}}{10 + \sqrt{2}} < 1.363$ and centers at $\B^2$ and $\CP^2$ (see Corollary~\ref{cor:BM_cover}).
\newpage

The distance problem for the Euclidean ball is not the only situation where the parallelogram is known to be part of the only maximizing pair of convex bodies. A well-known result by Stromquist~\cite{stromquist} states that the distance between any two planar symmetric convex bodies is at most $\frac{3}{2}$, where equality occurs precisely for the parallelogram and the affine-regular hexagon. In Section~\ref{sec:1sym}, we move to another setting, which can be seen as an intermediate step between the two aforementioned problems, namely the problem of estimating the distance between planar $1$-symmetric convex bodies (defined below). This means that instead of bounding the distance to the Euclidean disc, we estimate the distance between pairs of arbitrary $1$-symmetric convex bodies (also restricting the second convex body since the distance between a $1$-symmetric convex body and an arbitrary symmetric one can still be maximal possible).

A convex body $K \subseteq \R^n$ is called \cemph{$1$-symmetric} if for any point $x=(x_1, x_2, \ldots, x_n) \in K$, we have that $(\sigma_1 x_{\pi(1)}, \sigma_2 x_{\pi(2)}, \ldots, \sigma_n x_{\pi(n)}) \in K$ for any choice of signs $\sigma \in \{-1, 1\}^n$ and any permutation $\pi: \{1, 2, \ldots, n\} \to \{1, 2, \ldots, n\}$. $1$-symmetric convex bodies (or $1$-symmetric normed spaces) include the unit balls of the $\ell^n_p$-norms and have been studied extensively, as Banach spaces with $1$-symmetric bases are a topic of great interest even in infinite dimensions (see, e.g., \cite{tomczak}). Tomczak-Jaegermann proved in \cite{tomczaksymm} that the maximal distance between two $1$-symmetric convex bodies in $\R^n$ is of much smaller order than the maximal possible distance between two arbitrary symmetric convex bodies: It is not greater than $C \sqrt{n}$, where $C = \frac{2^{25/2}}{(\sqrt[4]{2}-1)^2}$. However, with the constant $C$ being quite large, this estimate does not imply anything in the case of small dimensions, which thus require a more detailed analysis. We study the maximal distance between two $1$-symmetric convex bodies in dimension two and prove that in this case, it actually coincides with the maximal distance between the Euclidean disc and planar symmetric convex bodies. Interestingly, the equality condition is not a simple one, and it again involves $\CP^2$ as indicated above.

\begin{twr}
\label{thm:1sym}
Let $K, L \subseteq \R^2$ be $1$-symmetric convex bodies. Then
$$d_{BM}(K, L) \leq \sqrt{2}.$$
Moreover, equality holds if and only if one of the convex bodies $K, L$ is a square (let it be $L$) and the second one satisfies for every $x \in \R^2$ the condition
\begin{equation}
\label{eq:thm_1sym_cond}
\|x \|_K \| x \|_{\varphi(K^\circ)} \geq \| x\|^2,
\end{equation}
where $\varphi \colon \R^2 \to \R^2$ is a rotation by $45^\circ$.
\end{twr}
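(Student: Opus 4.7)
Proof proposal. My plan hinges on the fact that the rotation $\varphi$ by $45^\circ$ normalizes the dihedral group $D_4$ that acts as the symmetry group of any $1$-symmetric body: conjugation by $\varphi$ cyclically permutes the four reflection axes of $D_4$, so $\varphi(K)$ is again $1$-symmetric whenever $K$ is. This picks out two natural candidate linear operators for realizing $d_{BM}(K,L)$: the identity, and a scaling of $\varphi$. The strategy is to show that at least one of these two operators already achieves distance at most $\sqrt{2}$, which will suffice.

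After rescaling I may assume $\|e_1\|_K=\|e_1\|_L=1$, which by $1$-symmetry gives $\mathcal{C}^2\subseteq K,L\subseteq \mathcal{P}^2$. Write $\rho_M(\theta):=\|(\cos\theta,\sin\theta)\|_M$; the $D_4$-symmetry determines $\rho_M$ by its restriction to the fundamental wedge $[0,\pi/4]$, with $\rho_M(0)=1$ and $\rho_M(\pi/4)=\beta_M\in[1/\sqrt 2,\sqrt 2]$. The distances via the identity and via $\varphi$ equal $\max u/\min u$ and $\max v/\min v$ respectively, where $u(\theta)=\rho_K(\theta)/\rho_L(\theta)$ and $v(\theta)=\rho_K(\pi/4-\theta)/\rho_L(\theta)$. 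The endpoint values are $u(0)=1,\;u(\pi/4)=\beta_K/\beta_L,\;v(0)=\beta_K,\;v(\pi/4)=1/\beta_L$, and the two functions satisfy the identity
$$u(\theta)\,u(\pi/4-\theta)\;=\;v(\theta)\,v(\pi/4-\theta)\qquad\text{for all }\theta\in[0,\pi/4],$$
which is immediate because both sides equal $\rho_K(\theta)\rho_K(\pi/4-\theta)/[\rho_L(\theta)\rho_L(\pi/4-\theta)]$.

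At the level of endpoints a short case check on $\beta_K,\beta_L\in[1/\sqrt 2,\sqrt 2]$ rules out the simultaneous occurrence of $|\log(\beta_K/\beta_L)|>\log\sqrt 2$ and $|\log(\beta_K\beta_L)|>\log\sqrt 2$ (one quickly derives a contradiction with the constraint $\beta_M\in[1/\sqrt 2,\sqrt 2]$ in each of the four sign combinations). The main obstacle is to lift this endpoint statement to a bound on the full oscillation of $u$ or $v$ on $[0,\pi/4]$, since $\rho_M$ need not be monotone on the wedge --- a regular $1$-symmetric octagon provides such a non-monotone example. I intend to control the interior behaviour via the displayed symmetry identity, which forces $u$ and $v$ to constrain one another: if $u$ oscillates strongly at some interior point, the identity pushes $v$ to compensate symmetrically across $\pi/8$, and the convexity of $K, L$ (together with the inclusions $\mathcal{C}^2\subseteq K, L\subseteq\mathcal{P}^2$) prevents both oscillations from exceeding $\log\sqrt 2$.

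For the equality case, if $d_{BM}(K,L)=\sqrt 2$ then the endpoint analysis and the symmetry identity both saturate. Tracing saturation shows that one of the inclusions $\mathcal{C}^2\subseteq M\subseteq\mathcal{P}^2$ must be realized at every angle for one of the bodies; by symmetry of the claim in $K$ and $L$, assume this body is $L$, so that $L=\mathcal{P}^2$. The condition on the remaining body $K$ becomes the requirement that the $\varphi$-candidate map saturates the bound $\sqrt 2$ \emph{pointwise} in $\theta$, which via the identification $\|x\|_{\varphi(K^\circ)}=h_K(\varphi^{-1}x)$ reads as $\|x\|_K\cdot\|x\|_{\varphi(K^\circ)}\geq\|x\|^2$ for all $x\in\R^2$, matching the characterization in the statement.
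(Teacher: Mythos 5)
Your proposal contains genuine gaps at the two places where the actual work happens. First, the inequality itself: your endpoint computation with $\beta_K,\beta_L$ is correct as far as it goes (writing $a=\log\beta_K$, $b=\log\beta_L$, one of $|a-b|$, $|a+b|$ equals $\bigl||a|-|b|\bigr|\leq\tfrac12\log 2$), but it only controls $u$ and $v$ at $\theta=0$ and $\theta=\pi/4$. You explicitly acknowledge that lifting this to the full oscillation on $[0,\pi/4]$ is ``the main obstacle'' and then only state an intention to overcome it. The symmetry identity $u(\theta)u(\pi/4-\theta)=v(\theta)v(\pi/4-\theta)$ cannot do this job on its own: it constrains products across $\pi/8$, so $u$ can be large at $\theta_0$ and small at $\pi/4-\theta_0$ with product $1$, and $v$ can do the same; nothing in the identity prevents both from oscillating by more than $\sqrt{2}$. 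The missing ingredient is a quantitative use of convexity. The paper supplies exactly this: after rotating so that both bodies have their ``short'' directions on the axes and normalizing so that $L\subseteq K$ with touching boundaries, each body is sandwiched in the eighth-sector between the triangle $\conv\{0,v^1,v^2\}$ and the quadrilateral $\conv\{0,v^1,v^2,v\}$ cut out by the supporting lines at the axis and diagonal contact points. This reduces everything to three parameters $\rho,\eta,\tau$ plus the constraint coming from $\bd(K)\cap\bd(L)\neq\emptyset$ (your set-up never uses that the bodies touch, which is essential), and the conclusion then rests on a genuinely nontrivial algebraic inequality (the paper's Lemma~\ref{lem:stab_ineq}) whose analogue is entirely absent from your argument.

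Second, the equality case. Your claim that ``tracing saturation'' forces one body to be $\CP^2$ does not follow from anything you have established; in the paper this comes out of the sharp equality condition of Lemma~\ref{lem:stab_ineq}, which pins $(\rho,\eta)$ to exactly two values, each forcing one of the bodies to be a square. More seriously, the characterization \eqref{eq:thm_1sym_cond} is an ``if and only if,'' and the ``if'' direction requires showing that \emph{no} parallelogram $P$ with $P\subseteq K\subseteq dP$ achieves $d<\sqrt{2}$ --- not merely that your two candidate maps saturate the bound. The paper's Lemma~\ref{lem:1symm_equal_cond} handles this by inscribing the square $P_x=\conv\{x,\varphi^2(x),\varphi^4(x),\varphi^6(x)\}$ for $x\in\bd(K)$, deducing the constraint \eqref{eq:P_x_cond} on every point of $K$, and then running the estimate for an arbitrary optimal parallelogram $P=\conv\{\pm x,\pm p\}$. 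Your proposal does not engage with arbitrary competing parallelograms at all, so both directions of the equality characterization remain unproven.
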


Let us point out that the rotation direction of $\varphi$ above does not matter, as planar $1$-symmetric convex bodies are invariant under rotation by $90^\circ$. Therefore, rotating $K^\circ$ by $45^\circ$ in either direction yields the same result. We further note that the condition \eqref{eq:thm_1sym_cond} is a rather unusual one for characterizing the equality of some Banach-Mazur distance. It should be compared with the fact that for an arbitrary origin symmetric convex body $K \subseteq \R^n$, one has $\|x \|_K \| x \|_{K^\circ} \geq \| x\|^2$ for any $x \in \R^n$. In particular, if $K$ is a $1$-symmetric convex body such that $\varphi(K)=K$ (i.e., $K$ is invariant under rotation by $45^\circ$), then $d_{BM}(K, \CP^2)=\sqrt{2}$. This generalizes a result of Lassak in \cite{lassak}, where it is proved that every regular $8j$-gon is at distance $\sqrt{2}$ to $\CP^2$. However, there are also examples of $1$-symmetric convex bodies satisfying this condition \eqref{eq:thm_1sym_cond} that are not invariant under rotation by $45^\circ$ (see Example~\ref{ex:stab_equal_cond_bodies}).

Throughout the paper, we write $\inte(X)$, $\conv(X)$, $\lin(X)$ for the \cemph{interior}, \cemph{convex hull} and \cemph{linear span} of $X \subseteq \R^n$, respectively. The \cemph{segment} connecting two points $v, w \in \R^n$ is denoted by $[v, w]$. If $K \subseteq \R^n$ is a convex body, then we say that a hyperplane $H \subseteq \R^n$ \cemph{supports} $K$ at $x \in K$ if $x \in H$ and $H$ does not intersect the interior of $K$. We write $U^\perp$ for the \cemph{orthogonal complement} of a linear subspace $U \subseteq \R^n$.

\section{Ader's characterization and its consequences}
\label{sec:ader}

We begin this section with the proof of Theorem~\ref{thm:ader_cond}. The proof of the implication from (i) to (ii) very closely follows Ader's original reasoning in \cite{ader}. The reverse implication is also based on his idea, which we present in a more abstract form to make it easier to generalize to the $n$-dimensional case. The decomposition form (iii) was not stated in his paper. This might be a reason why Ader did not prove the inequality $d_{BM}(K,\B^n) \leq \sqrt{n}$ for general dimensions. To prove this inequality, the decomposition form seems to be more convenient to work with than the non-separation condition, i.e., condition (ii). We denote by $I_n \in \ms{n}$ the \cemph{$n \times n$ identity matrix}. For matrices $A,B \in \ms{n}$, we write $\|A\|$ for the usual operator norm when $A$ is considered as an operator from $\R^n$ to $\R^n$ equipped with the Euclidean norm, and, with $\tr$ denoting the \cemph{trace}, further $\langle A, B \rangle_F = \tr(A B)$ for their \cemph{Frobenius inner product}. Note for $x \in \R^n$ that
\begin{equation}
\label{eq:frobenius}
\langle x, A x \rangle = \tr(x^T A x) = \tr(A x x^T) = \langle A, x x^T \rangle_F.
\end{equation}

\begin{proof}[Proof of Theorem~\ref{thm:ader_cond}]
We begin with the implication from (i) to (ii). Let us fix some matrix $A \in \ms{n} \setminus \{0\}$. For $\varepsilon \in \left (0,\frac{1}{\|A\|} \right )$, we define a linear operator $T_\varepsilon \colon \R^n \to \R^n$ by the relation $T_\varepsilon(x) := x + \varepsilon A x = (I_n + \varepsilon A) x$. Since $\|\varepsilon A\| < 1$, the Neumann series show that $T_\varepsilon$ is invertible. Thus, there exist $y^\varepsilon, z^\varepsilon \in \bd(K)$ such that $\|T_\varepsilon(y^\varepsilon)\| = \max \{ \|y'\| : \ y' \in \bd(T_\varepsilon(K)) \}$ and $\|T_\varepsilon(z^\varepsilon)\| = \min \{ \|z'\| : \ z' \in \bd(T_\varepsilon(K)) \} > 0$. Since $y^\varepsilon, z^\varepsilon \in \bd(K)$, we clearly have $\|y^\varepsilon\| \leq R$ and $\|z^\varepsilon\| \geq r$. The assumption $d_{BM}(K,\B^n) = \frac{R}{r}$ now implies that
\begin{equation}
\label{eq:Ader_cond_ineq}
1 \leq \frac{r}{R} \cdot \frac{\max \{ \|y'\| : \ y' \in \bd(T_\varepsilon(K)) \}}{\min \{ \|z'\| : \ z' \in \bd(T_\varepsilon(K))\}}
= \frac{r \|y^\varepsilon\|}{R \|z^\varepsilon\|} \cdot \frac{\left\| (I_n + \varepsilon A) \frac{y^\varepsilon}{\|y^\varepsilon\|}\right\|}{\left\| (I_n + \varepsilon A) \frac{z^\varepsilon}{\|z^\varepsilon\|}\right\|} \leq \frac{\left\| (I_n + \varepsilon A) \frac{y^\varepsilon}{\|y^\varepsilon\|}\right\|}{\left\| (I_n + \varepsilon A) \frac{z^\varepsilon}{\|z^\varepsilon\|}\right\|}.
\end{equation}
For any $x \in \R^n \setminus \{0\}$, we have that
$$ \left\| (I_n + \varepsilon A) \frac{x}{\|x\|} \right\|^2 = 1 + 2 \varepsilon \left\langle \frac{x}{\|x\|}, A \frac{x}{\|x\|} \right\rangle + \varepsilon^2 \left\langle A \frac{x}{\|x\|}, A \frac{x}{\|x\|} \right\rangle $$
and for any $u,v \in \bd(\B^n)$ that $|\langle Au, Au \rangle - \langle Av, Av \rangle| \leq 2 \|A\|^2$. Hence, we obtain from \eqref{eq:Ader_cond_ineq} that
$$ \left\langle \frac{y^\varepsilon}{\|y^\varepsilon\|}, A \frac{y^\varepsilon}{\|y^\varepsilon\|} \right\rangle \geq \left\langle \frac{z^\varepsilon}{\|z^\varepsilon\|}, A \frac{z^\varepsilon}{\|z^\varepsilon\|} \right\rangle - \varepsilon \|A\|^2. $$
\newpage

By compactness of $\bd(K)$, we can choose a sequence $\{ \varepsilon_m \}_{m \geq 1}$ in $\left( 0,\frac{1}{\|A\|} \right)$ with $\varepsilon_m \to 0$ for $m \to \infty$ such that $y^{\varepsilon_m} \to y$ and $z^{\varepsilon_m} \to z$ for some $y,z \in \bd(K)$. Now,
\begin{align*}
\left\langle \frac{y}{\|y\|}, A \frac{y}{\|y\|} \right\rangle
& = \lim_{m \to \infty} \left\langle \frac{y^{\varepsilon_m}}{\|y^{\varepsilon_m}\|}, A \frac{y^{\varepsilon_m}}{\|y^{\varepsilon_m}\|} \right\rangle
\\
& \geq \lim_{m \to \infty} \left\langle \frac{z^{\varepsilon_m}}{\|z^{\varepsilon_m}\|}, A \frac{z^{\varepsilon_m}}{\|z^{\varepsilon_m}\|} \right\rangle - \varepsilon_m \|A\|^2
= \left\langle \frac{z}{\|z\|}, A \frac{z}{\|z\|} \right\rangle.
\end{align*}
Finally, we have $\|y\| = \lim_{m \to \infty} \|y^{\varepsilon_m}\| = \lim_{m \to \infty} \max \{ \|T_{\varepsilon_m}(y')\| : y' \in \bd(K) \} = R$ and similarly $\|z\| = r$. Altogether, $y$ and $z$ are possible choices of the required contact points.

Next, we prove that (ii) implies (i) by contrapositive. Suppose that $\frac{R}{r} > d :=d_{BM}(K, \B^n)$. There exists an origin symmetric ellipsoid $E \subseteq \R^n$ such that $E \subseteq K \subseteq d E$. Let $S \in \ms{n}$ be a matrix with $\|x\|_E^2 = \langle x, S x \rangle$ for all $x \in \R^n$ and define the matrix $A := S - \frac{1}{r^2} I_n \in \ms{n}$. For $y \in \bd(K) \cap \bd(R\B^n)$, it follows from $y \in K \subseteq d E$ that $$ \langle y, A y \rangle = \langle y, S y \rangle - \frac{\langle y, y \rangle}{r^2} = \|y\|_E^2 - \frac{\|y\|^2}{r^2} \leq d^2 - \frac{R^2}{r^2} < 0. $$ For $z \in \bd(K) \cap \bd(r \B^n)$, we further have $\|z\|_E \geq 1$ since $z \in \bd(K)$ and $E \subseteq K$. Thus,
$$ \langle z, A z \rangle = \langle z, S z \rangle - \frac{\langle z, z \rangle}{r^2} = \|z\|_E^2 - \frac{\|z\|^2}{r^2} \geq 1 - \frac{r^2}{r^2}=0.$$
Clearly, the same inequalities are true when $y$ is replaced with $\frac{y}{R}$ and $z$ is replaced with $\frac{z}{r}$. Therefore, the matrix $A$ violates (ii) as desired.

We move to the implication from (ii) to (iii). Let us define two sets of rank-one matrices by $\CY := \{ \tilde{y} \tilde{y}^T : R \tilde{y} \in \bd(K) \cap \bd(R\B^n) \} \subseteq \ms{n}$ and $\CZ := \{ \tilde{z} \tilde{z}^T : r \tilde{z} \in \bd(K) \cap \bd(r\B^n) \} \subseteq \ms{n}$. Both of them are compact by the compactness of $\bd(K) \cap \bd(R\B^n)$ and $\bd(K) \cap \bd(r\B^n)$. By the assumption (ii) and \eqref{eq:frobenius}, there exists no $A \in \ms{n}$ such that all $y \in \bd(K) \cap \bd(R\B^n)$ and $z \in \bd(K) \cap \bd(r\B^n)$ satisfy
$$ \left\langle A, \frac{y y^T}{R^2} \right\rangle_F = \left\langle \frac{y}{R}, A \frac{y}{R} \right\rangle < \left\langle \frac{z}{r}, A \frac{z}{r} \right\rangle = \left\langle A, \frac{z z^T}{r^2} \right\rangle_F. $$
Consequently, the compact sets $\conv(\CY)$ and $\conv(\CZ)$ cannot be strongly separated. They thus intersect, so there exist integers $N,M \geq 1$, points $\tilde{y}^1, \ldots, \tilde{y}^N \in \bd(\frac{K}{R}) \cap \bd(\B^n)$, $\tilde{z}^1, \ldots, \tilde{z}^M \in \bd(\frac{K}{r}) \cap \bd(\B^n)$, and weights $\tilde{\lambda}_1, \ldots, \tilde{\lambda}_N, \tilde{\mu}_1, \ldots, \tilde{\mu}_M > 0$ with $\sum_{i=1}^N \tilde{\lambda}_i = \sum_{i=1}^M \tilde{\mu}_i = 1$ such that
\begin{equation}
\label{eq:decomp}
\sum_{i=1}^N \tilde{\lambda}_i \tilde{y}^i (\tilde{y}^i)^T = \sum_{i=1}^M \tilde{\mu}_i \tilde{z}^i (\tilde{z}^i)^T.
\end{equation}
Equivalently, we have for all $x \in \R^n$ that
$$ \sum_{i=1}^N \tilde{\lambda}_i \langle x, \tilde{y}^i \rangle \tilde{y}^i = \sum_{i=1}^M \tilde{\mu}_i \langle x, \tilde{z}^i \rangle \tilde{z}^i. $$
For the desired decomposition, we simply define $y^i := R \tilde{y}^i$, $z^i := r \tilde{z}^i$, $\lambda_i = \frac{\tilde{\lambda}_i}{R^2}$, and $\mu_i := \frac{\tilde{\mu}_i}{r^2}$.
\newpage

For the implication from (iii) to (ii), we first note that comparing traces of both sides of the decomposition in (iii) yields
$$ \omega := R^2 \sum_{i=1}^N \lambda_i = r^2 \sum_{i=1}^M \mu_i > 0. $$
Therefore, it is straightforward to deduce (ii) from (iii) by simply reversing all of the steps above. The only additional observation needed once one arrives at the equality \eqref{eq:decomp} is that all weights need to be rescaled by $\frac{1}{\omega}$ to obtain a point in $\conv(\CY) \cap \conv(\CZ)$.

Lastly, we get the estimate $N+M \leq \frac{n (n+1)}{2} + 1$ from Kirchberger's theorem \cite[Theorem~$1.3.11$]{schneider}: It states that if two compact sets in $\R^k$ cannot be strongly separated, then one can choose two subsets of these sets with at most $k+2$ points in total such that already these subsets cannot be strongly separated. In our case, this would give us $N+M \leq \frac{n (n+1)}{2} + 2$ points that satisfy \eqref{eq:decomp} since $\dim(\ms{n}) = \frac{n (n+1)}{2}$. To further reduce the upper bound on $N+M$ by $1$, we note that all matrices in $\CY$ and $\CZ$ have trace $1$, i.e., they all live in the affine subspace $\CU := \{ A \in \ms{n} : \langle I_n, A \rangle_F = \tr(A) = 1 \}$. Since $\CY$ and $\CZ$ intersect, they also cannot be strongly separated within $\CU$. Applying Kirchberger's Theorem relative to $\CU$ gives the claimed bound on $N+M$ for an appropriate choice.
\end{proof}

\begin{remark}
\label{rem:general}
In the process of proving \cite[Theorem~$2.1$]{lewis} (more details on the result itself in Remark~\ref{rem:contact_pairs} below), Lewis established conditions that are close to the necessity of the non-separation condition (ii) and decomposition condition (iii) in the general case. For origin symmetric convex bodies $K,L \subseteq \R^n$ with $r L \subseteq K \subseteq R L$ in optimal Banach-Mazur distance position, the non--separation condition should read as follows: for every real $n \times n$~matrix $A$, there exists an outer contact pair $(y, u)$ (i.e., $y \in \bd(K) \cap \bd(RL)$ and $u \in \bd(K^{\circ}) \cap \bd((RL)^{\circ})$ with $\langle y, u \rangle = 1$) and an inner contact pair $(z, v)$ (which means $z \in \bd(K) \cap \bd(rL)$ and $v \in \bd(K^{\circ}) \cap \bd((rL)^{\circ})$ with $\langle z, v \rangle = 1$), such that $\langle y, Au \rangle \geq \langle z, Av \rangle$. The corresponding decomposition condition states the existence of outer contact pairs $(y^1,u^1), \ldots, (y^N,u^N)$, inner contact pairs $(z^1,v^1), \ldots, (z^M,u^M)$, and weights $\lambda_1, \ldots, \lambda_N, \mu_1, \ldots, \mu_M > 0$ such that we have for all $x \in \R^n$ the equality
$$ \sum_{i=1}^N \lambda_i \langle x, u^i \rangle y^i = \sum_{i=1}^M \mu_i \langle x, v^i \rangle z^i. $$
Lewis' proof of \cite[Theorem~$2.1$]{lewis} provides slightly weaker conditions. The only difference is that $v \in \bd(K^{\circ})$ (resp.~$v^i \in \bd(K^\circ)$) may fail, where it is not apparent how to modify the proof to ensure these conditions in general as well. It is possible if $L$ is smooth, however: Clearly, the convex bodies $K$ and $rL$ share a supporting hyperplane at a contact point $z \in \bd(K) \cap \bd(rL)$. If $L$ is smooth, then this hyperplane must be the unique hyperplane supporting $r L$ at $z$, i.e., any hyperplane that supports $rL$ at $z$ also supports $K$. By changing the roles of $K$ and $L$ or using a polarity argument, we see that it would be enough if one of $K$ or $L$ is smooth or strictly convex to obtain the above necessary conditions.
\newpage

Let us already point out that the above necessary conditions would, in contrast to the Euclidean case, not be enough to guarantee the optimal Banach-Mazur distance position in general. This can be seen by the example of $\CC^n \subseteq \CP^n \subseteq n\CC^n$, where appropriately chosen inner and rescaled outer contact pairs form respective John decompositions. However, the Banach-Mazur distance between $\CP^n$ and $\CC^n$ is of order $\sqrt{n}$ (see for example \cite[Proposition~$37.6$]{tomczak}), so in general much smaller than $n$. It is not difficult to slightly modify this counterexample to additionally make both of the convex bodies smooth and strictly convex. Extending at least the necessary conditions to general pairs of convex bodies (i.e., without any regularity or even symmetry assumptions) would be an interesting direction for future research.
\end{remark}

In the following corollary, we show how to obtain the upper bound $d_{BM}(K, \B^n) \leq \sqrt{n}$ for symmetric convex bodies $K \subseteq \R^n$ using the Ader decomposition.

\begin{cor}
\label{cor:estimate}
For every symmetric convex body $K \subseteq \R^n$ we have $d_{BM}(K, \B^n) \leq \sqrt{n}$.
\end{cor}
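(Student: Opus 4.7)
The plan is to apply a linear transformation bringing $K$ into an optimal distance position with respect to $\B^n$, so that $r\B^n \subseteq K \subseteq R\B^n$ for some $R \geq r > 0$ with $R/r = d_{BM}(K,\B^n)$. Theorem~\ref{thm:ader_cond} then supplies an Ader decomposition with outer contact points $y^1, \ldots, y^N$, inner contact points $z^1, \ldots, z^M$, and positive weights $\lambda_i, \mu_j$. I would read the decomposition identity of (iii) as an equality of linear operators and set
$$ \Sigma := \sum_{i=1}^N \lambda_i y^i (y^i)^T = \sum_{j=1}^M \mu_j z^j (z^j)^T \in \ms{n}, $$
observing that $\Sigma$ is positive semidefinite and that, by the trace identity in Theorem~\ref{thm:ader_cond}, $\tr(\Sigma) = R^2 \sum_i \lambda_i = r^2 \sum_j \mu_j =: \omega$.

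The heart of the argument is to estimate $\tr(\Sigma^2)$ in two different ways. From below, applying Cauchy--Schwarz to the nonnegative eigenvalues of $\Sigma$ gives $\tr(\Sigma^2) \geq \tr(\Sigma)^2/n = \omega^2/n$. From above, expanding $\Sigma \cdot \Sigma$ using the first expression on the left factor and the second on the right produces
$$ \tr(\Sigma^2) = \sum_{i=1}^N \sum_{j=1}^M \lambda_i \mu_j \langle y^i, z^j \rangle^2. $$
The crucial geometric input here is the bound $|\langle y^i, z^j \rangle| \leq r^2$. I would deduce it by noting that $r\B^n$ is smooth, so the unique hyperplane supporting $r\B^n$ at $z^j$, namely $\{x : \langle x, z^j \rangle = r^2\}$, must also support $K$ at the boundary point $z^j$ (because $r\B^n \subseteq K$). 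Since $y^i \in K$ and $K$ is origin symmetric, this one-sided bound upgrades to the stated absolute value estimate.

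Combining the two estimates yields
$$ \frac{\omega^2}{n} \leq \tr(\Sigma^2) \leq r^4 \Bigl( \sum_i \lambda_i \Bigr) \Bigl( \sum_j \mu_j \Bigr) = \frac{r^2 \omega^2}{R^2}, $$
which rearranges directly to $R^2 \leq n r^2$, that is, $d_{BM}(K,\B^n) = R/r \leq \sqrt{n}$.

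The point I expect to require the most care, and which represents the genuine difference from the classical John-decomposition argument, is that Ader's decomposition does \emph{not} force $\Sigma$ to be a multiple of the identity. One therefore cannot simply evaluate $\langle x, \Sigma x \rangle$ at a vector $x \in K$ to read off $\|x\|^2 \leq n r^2$ directly. Replacing that pointwise evaluation by a global estimate via the trace, and bridging $\tr(\Sigma)$ to $\tr(\Sigma^2)$ through the eigenvalue Cauchy--Schwarz inequality, is precisely what allows the weaker information provided by Ader to yield the same bound as the stronger John identity.
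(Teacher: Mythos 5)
Your proposal is correct and follows essentially the same route as the paper: the paper likewise forms the matrix $A$ underlying the Ader decomposition, expands $\langle A, A\rangle_F = \sum_{i,j}\lambda_j\mu_i\langle y^j, z^i\rangle^2$, bounds this above by $r^4(\sum\lambda_j)(\sum\mu_i) = r^2/R^2$ using $|\langle x, z^i\rangle|\leq r^2$ for $x\in K$, and below by $\tr(A)^2/n$ via Cauchy--Schwarz against $I_n$ (which is your eigenvalue inequality). The only cosmetic difference is that the paper normalizes $\tr(A)=1$ whereas you carry the constant $\omega$ through.
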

\begin{proof}
Let $K$ be origin symmetric and let $R \geq r > 0$ be such that $r \B^n \subseteq K \subseteq R \B^n$ and $\frac{R}{r} = d_{BM}(K,\B^n)$. For an Ader decomposition as in Theorem~\ref{thm:ader_cond}~(iii), take $A \in \ms{n}$ such that for all $x \in \R^n$ we have
$$ A x = \sum_{i=1}^N \lambda_i \langle x, y^i \rangle y^i = \sum_{i=1}^M \mu_i \langle x, z^i \rangle z^i. $$
Then
$$ R^2 \sum_{i=1}^N \lambda_i = \tr(A) = r^2 \sum_{i=1}^M \mu_i > 0, $$
so after rescaling all $\lambda_i$ and $\mu_j$ by the same positive factor if necessary, we may assume $\tr(A) = 1$ and consequently $\sum_{i=1}^N \lambda_i = \frac{1}{R^2}$, $\sum_{i=1}^M \mu_i = \frac{1}{r^2}$. Next, we observe that since $z^i$ is a common boundary point of $K$ and $r \B^n$, we have $| \langle x, z^i \rangle | \leq r^2$ for every $x \in K$. Therefore,
\begin{align*}
\sum_{i=1}^M \mu_i \langle A z^i, z^i \rangle
& = \sum_{i=1}^M \mu_i \left \langle \sum_{j=1}^{N} \lambda_j \langle z^i, y^j \rangle y^j, z^i \right \rangle
= \sum_{i, j} \mu_i \lambda_j \langle z^i, y^j \rangle^2
\leq r^4 \sum_{i, j} \mu_i \lambda_j
\\
& = r^4 \left ( \sum_{i=1}^{M} \mu_i \right ) \left ( \sum_{j=1}^{N} \lambda_j \right ) = \frac{r^2}{R^2}
= \frac{1}{d_{BM}(K, \B^n)^2}.
\end{align*}
On the other hand, we obtain from \eqref{eq:frobenius} that
$$ \sum_{i=1}^M \mu_i \langle A z^i, z^i \rangle = \sum_{i=1}^M \mu_i \left\langle A, z^i (z^i)^T \right\rangle_F = \left\langle A, \sum_{i=1}^M \mu_i z^i (z^i)^T \right\rangle_F = \langle A, A \rangle_F. $$
Applying the Cauchy-Schwarz inequality for the Frobenius inner product yields
$$ n \langle A, A \rangle_F = \langle I_n, I_n \rangle_F \cdot \langle A, A \rangle_F \geq \langle I_n, A \rangle_F^2 = \tr(A)^2 = 1. $$
This shows that $\langle A, A \rangle_F \geq \frac{1}{n}$ and hence $d_{BM}(K, \B^n) \leq \sqrt{n}$.

By the equality case in the Cauchy-Schwarz inequality, the equality $d_{BM}(K,\B^n) = \sqrt{n}$ holds if and only if $A$ is a positive multiple of $I_n$ and $|\langle z^i, y^j \rangle| = r^2$ for all $i$ and $j$.
\end{proof}

The fact that $A$ is a positive multiple of the identity matrix in the equality case means that there exist John decompositions supported on the (rescaled) inner and outer contact points, respectively. This allows us to easily characterize the equality case in dimensions two and three. Before that, we need one additional simple observation.

\begin{lem}
\label{lem:n_John_decomp}
Suppose that vectors $u^1, \ldots, u^n \in \bd(\B^n)$ and weights $\lambda_1, \ldots, \lambda_n > 0$ form a John decomposition. Then the set $\{ u^1, \ldots, u^n \}$ is an orthonormal basis of $\R^n$.
\end{lem}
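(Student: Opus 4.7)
The plan is to reformulate the John decomposition as a matrix identity and exploit the fact that the number of vectors equals the dimension. The hypothesis $x = \sum_{i=1}^n \lambda_i \langle x, u^i\rangle u^i$ for all $x \in \R^n$ is equivalent to
$$ \sum_{i=1}^n \lambda_i u^i (u^i)^T = I_n. $$

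First, I would define the $n \times n$ matrix $U$ whose $i$-th column is $\sqrt{\lambda_i}\, u^i$. The identity above then reads precisely $UU^T = I_n$. Since $U$ is a square matrix, this implies that $U$ is orthogonal, and therefore also $U^T U = I_n$. Reading off the entries of $U^T U$ gives
$$ \sqrt{\lambda_i \lambda_j}\, \langle u^i, u^j\rangle = \delta_{ij} \quad \text{for } 1 \leq i,j \leq n. $$
Combined with $\|u^i\| = 1$, setting $i = j$ yields $\lambda_i = 1$ for all $i$, and then setting $i \neq j$ yields $\langle u^i, u^j\rangle = 0$. Hence $\{u^1, \ldots, u^n\}$ is an orthonormal basis of $\R^n$.

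There is essentially no hard step here; the entire content is the elementary linear algebra fact that for a square matrix one has $UU^T = I_n \Leftrightarrow U^T U = I_n$. The only mild subtlety is recognizing that the matrix version $\sum_i \lambda_i u^i(u^i)^T = I_n$ of the John decomposition can be written as $UU^T = I_n$ after absorbing the weights into the columns.
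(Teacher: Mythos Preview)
Your proof is correct and takes a genuinely different route from the paper's. The paper argues directly: applying the decomposition to each $u^j$ gives $1 = \lambda_j + \sum_{i \neq j} \lambda_i \langle u^j, u^i \rangle^2$, and summing over $j$ together with $\sum_j \lambda_j = n$ forces all cross terms $\lambda_i \langle u^j, u^i \rangle^2$ to vanish. Your argument instead packages the decomposition as $UU^T = I_n$ with $U$ square, and then uses the elementary fact that a one-sided inverse of a square matrix is two-sided to get $U^T U = I_n$. Both proofs are short and elementary; yours is a bit slicker in that it avoids any summation over indices and yields $\lambda_i = 1$ for free, while the paper's approach stays closer to the scalar form of the decomposition and does not invoke any matrix-inverse reasoning.
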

\begin{proof}
We only need to show that the vectors $u^i$ are pairwise orthogonal. From the John decomposition, we have for all $j \in \{1, \ldots, n\}$
$$1 = \langle u^j, u^j \rangle = \sum_{i=1}^n \lambda_i \langle u^j, u^i \rangle^2 = \lambda_j + \sum_{\substack{i=1 \\ i \neq j}}^n \lambda_i \langle u^j, u^i \rangle^2.$$
Summing over all $j$ yields
$$n = \sum_{j=1}^n \lambda_j + \sum_{j=1}^n \sum_{\substack{i=1 \\ i \neq j}}^n \lambda_i \langle u^j, u^i \rangle^2 = n + \sum_{j=1}^n \sum_{\substack{i=1 \\ i \neq j}}^n \lambda_i \langle u^j, u^i \rangle^2.$$
Every remaining term of the form $\lambda_i \langle u^j, u^i \rangle^2$ is non-negative, so all of them must be zero. Since $\lambda_i > 0$, this implies $\langle u^j, u^i \rangle = 0$ for all $i,j \in \{1, \ldots, n\}$ such that $i \neq j$, so the conclusion follows.
\end{proof}

\begin{twr}
\label{thm:eqcase}
Let $K \subseteq \R^n$ be a symmetric convex body such that $d_{BM}(K, \B^n)=\sqrt{n}$. If $n \leq 3$, then $K$ is an affine transformation of $\CP^n$ or $\CC^n$.
\end{twr}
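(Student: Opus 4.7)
The approach is to unpack the rigidity of the equality case sketched at the end of the proof of Corollary~\ref{cor:estimate}. After applying a suitable invertible linear map, I may assume $\B^n\subseteq K\subseteq \sqrt n\,\B^n$, so that $r=1$ and $R=\sqrt n$. The equality discussion in that corollary yields outer contact points $y^1,\ldots,y^N\in \bd(K)\cap\bd(\sqrt n\,\B^n)$ and inner contact points $z^1,\ldots,z^M\in \bd(K)\cap\bd(\B^n)$ together with positive weights such that, writing $\tilde y^i:=y^i/\sqrt n$, each of the sets $\{\tilde y^i\}$ and $\{z^j\}$ forms a John decomposition in the sense of Theorem~\ref{thm:john}, and in addition $|\langle \tilde y^i,z^j\rangle|=1/\sqrt n$ for all $i,j$. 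The case $n=1$ is trivial since every symmetric convex body in $\R$ is affinely a segment.

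For $n=2$, I would pick two linearly independent $\tilde y^i$'s; the two equations $|\langle z,\tilde y^i\rangle|=1/\sqrt 2$ have exactly two antipodal pairs of solutions, and both pairs must appear among the $z^j$'s in order for them to span $\R^2$. By Lemma~\ref{lem:n_John_decomp}, the two representatives are orthonormal, so after a rotation one may take $z^1=e_1$ and $z^2=e_2$. The orthogonality condition then pins the $\tilde y^i$'s to $\tfrac{1}{\sqrt 2}(1,\pm 1)$, and the inclusions $\conv\{\pm y^i\}\subseteq K\subseteq \bigcap_j\{x:|\langle x,z^j\rangle|\le 1\}$ both collapse to $[-1,1]^2$, yielding $K=\CP^2$.

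For $n=3$, the plan is to fix three linearly independent $\tilde y^1,\tilde y^2,\tilde y^3$. Writing $Y$ for the matrix whose columns are these vectors, each $z^j$ must lie in one of four antipodal solution pairs, parametrised by sign patterns $\epsilon\in\{\pm 1\}^3$ modulo negation, via $z^{(\epsilon)}=\tfrac{1}{\sqrt 3}Y^{-T}\epsilon$. The argument then splits on how many of these four antipodal pairs actually appear among the $z^j$'s. If only three appear, Lemma~\ref{lem:n_John_decomp} forces them to be orthonormal; rotating to $z^j=e_j$ the $\tilde y^i$'s are trapped in $\tfrac{1}{\sqrt 3}\{-1,1\}^3$, a short calculation shows the John decomposition requires all four antipodal representatives to appear, and the sandwich $\conv\{\pm y^i\}\subseteq K\subseteq\bigcap_j\{x:|\langle x,z^j\rangle|\le 1\}$ collapses to the cube $[-1,1]^3=\CP^3$. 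If on the other hand all four antipodal $z$-pairs appear, the four unit norm conditions $\|z^{(\epsilon)}\|^2=1$ become four equations $\epsilon^T(Y^TY)^{-1}\epsilon=3$, and pairwise subtraction forces every off-diagonal entry of $(Y^TY)^{-1}$ to vanish. Hence $\tilde y^1,\tilde y^2,\tilde y^3$ are themselves orthonormal; any additional $\tilde y^i$ is then pinned onto $\{\pm e_1,\pm e_2,\pm e_3\}$ by the same $|\langle \tilde y^i,z^j\rangle|=1/\sqrt 3$ conditions, and the analogous sandwich gives $K=\sqrt 3\,\CC^3$.

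The main obstacle is the second subcase for $n=3$: one really has to run the Gram-matrix computation to show that the four unit norm conditions on $z^{(\epsilon)}$ force $(Y^TY)^{-1}$ to be diagonal, and subsequently that the inner product conditions in the rotated coordinates drive every remaining $\tilde y^i$ onto the coordinate axes. These are short but genuine linear algebra checks. Everything else is bookkeeping: confirming that the two subcases above cover all possibilities---in particular, $(N,M)=(3,3)$ is excluded because no three distinct vectors in $\tfrac{1}{\sqrt 3}\{-1,1\}^3$ are pairwise orthogonal, while the case $N,M\ge 4$ is excluded by combining the two subcases, each of which collapses one of $N$ and $M$ down to $3$.
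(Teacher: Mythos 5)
Your proposal is correct, and it works within the same framework as the paper's proof: both extract from the equality analysis in Corollary~\ref{cor:estimate} that the inner and rescaled outer contact points carry John decompositions, and both invoke Lemma~\ref{lem:n_John_decomp} to force orthonormality when a decomposition has minimal support. The difference lies in how the contact configurations are then pinned down. The paper uses the cardinality bound $N+M\le\frac{n(n+1)}{2}+1$ from Theorem~\ref{thm:ader_cond} to force at least one decomposition onto exactly $n$ points, and disposes of the cross-polytope case in dimension three by passing to $K^\circ$. You instead exploit the other piece of information from the Cauchy--Schwarz equality case, namely $|\langle z^i,y^j\rangle|=r^2$ (i.e.\ $|\langle \tilde y^i,z^j\rangle|=1/\sqrt n$) for \emph{all} pairs $i,j$, which confines every $z^j$ to at most $2^{n-1}$ antipodal classes determined by $n$ independent $\tilde y^i$'s; the dichotomy ``three classes vs.\ four classes'' then replaces the paper's counting argument, and the Gram-matrix computation $\epsilon^T(Y^TY)^{-1}\epsilon=3$ for all sign patterns (whose pairwise differences kill the off-diagonal entries) replaces the duality step. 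I checked the two steps you flag as needing genuine work: the Gram computation does force $(Y^TY)^{-1}$ diagonal, and the conditions $|\langle\tilde y^i,\epsilon\rangle|=1$ for all $\epsilon\in\{\pm1\}^3$ together with $\|\tilde y^i\|=1$ do force $\tilde y^i\in\{\pm e_1,\pm e_2,\pm e_3\}$. Your route buys independence from the Kirchberger bound on $N+M$ and avoids polarity, at the cost of a slightly longer linear-algebra verification; your closing remarks about $(N,M)=(3,3)$ and $N,M\ge4$ are redundant given that the three-vs-four-class dichotomy is already exhaustive (at least three classes are needed for the $z^j$ to span $\R^3$).
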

\begin{proof}
Let us suppose that $\B^n \subseteq K \subseteq \sqrt{n}\B^n$. The proof of the inequality $d_{BM}(K,\B^n) \leq \sqrt{n}$ based on the Ader decomposition (Corollary~\ref{cor:estimate}) shows that any Ader decomposition for $K$ must be comprised of two John decompositions, which are supported on the inner and rescaled outer contact points, respectively. Additionally, we know from Theorem~\ref{thm:ader_cond} that these John decompositions can be chosen to be supported on at most $\frac{n(n+1)}{2} + 1$ points in total. It is also clear that any John decomposition must be supported on at least $n$ points (not including symmetric pairs). If one of them is supported on precisely $n$ points, then those points must form an orthogonal set by Lemma~\ref{lem:n_John_decomp}. Based on these observations, we shall consider the two cases for $n = 2$ and $n = 3$ separately.

If $n=2$, then both John decompositions are supported on precisely two points since $\frac{n(n+1)}{2} + 1 = 4 = n + n$. Let $\pm x, \pm y \in \bd(K) \cap \bd(\B^2)$ be the corresponding inner contact points (where $x \neq \pm y$). In this case, $x, y$ form an orthonormal basis and we have $K \subseteq P$ for the square $P$ induced by the tangents to $\B^2$ at the points $\pm x, \pm y$. Moreover, $K$ must contain all four vertices of $P$ since they are the only points in $P \cap \bd(\sqrt{2} \B^2)$, i.e., the only possible pairs of outer contact points for $K$ and $\sqrt{2}\B^2$. It follows that $K=P$ is a square.

If $n=3$, then at least one of the decompositions is supported on precisely three points, while the other decomposition is supported on three or four points. We first assume that the decomposition for $\B^3$ is supported on precisely three points $x,y,z$. We know that $x, y, z$ form an orthonormal basis, so let $C$ be the cube induced by the planes supporting $\B^3$ at $\pm x, \pm y, \pm z$. Then $C$ intersects $\sqrt{3} \B^3 $ in precisely its eight vertices, which are therefore the only possible outer contact points. Since none of these vertices are orthogonal to each other, it is not possible for the John decomposition for $K$ and $\sqrt{3}\B^3$ to be supported on only three points. Hence, the decomposition must be supported on four points, so that $K$ must contain all vertices of $C$. We conclude that $K = C$ is a cube. If instead the decomposition for $\sqrt{3}\B^3$ is based on precisely three points, then we can use a duality argument. Indeed, we still have $d_{BM}(K^\circ, \B^3) = \sqrt{3}$ and the previous reasoning applied to $K^\circ$ shows that $K^\circ$ is a cube. Consequently, $K$ is a cross-polytope and the proof is complete.
\end{proof}

The above arguments no longer work if $n \geq 4$ since there are simply too many possibilities to split the number of contact points between the decompositions for $\B^n$ and $\sqrt{n}\B^n$ in this case. In the remark below, we provide an explicit example of symmetric convex bodies that are affinely non-equivalent to $\CP^n$ and $\CC^n$ with the distance $\sqrt{n}$ to $\B^n$ for all $n \geq 4$. Essentially the same example can be found in a paper of Leichtweiss \cite{leichtweiss}, which is, however, available only in German. For the convenience of non-German speaking readers, we provide the example here as well (with a simplified proof based on Theorem~\ref{thm:ader_cond}).

\begin{remark}
\label{rem:higher_dim}
For $n \geq 4$, let $K \subseteq \R^n$ be a symmetric convex body that arises from $\CP^n$ by cutting a single pair of antipodal vertices $\pm v$ off of $\CP^n$ with hyperplanes $\pm H$, where $H$ is sufficiently close to $v$ such that $\B^n \subseteq K$. The resulting convex body $K$ is clearly not affinely equivalent to $\CP^n$ or $\CC^n$ since it has $2n+2$ facets, which is different from $2n$ and $2^n$ for $n \geq 4$. By Theorem~\ref{thm:ader_cond}, there exists an Ader decomposition for $\CP^n$ that is supported on at most $\frac{n (n+1)}{2} + 1$ inner and outer contact points with $\B^n$ and $\sqrt{n} \B^n$ in total. By the proof of Corollary~\ref{cor:estimate} and $d_{BM}(\CP^n,\B^n) = \sqrt{n}$, the matrix underlying this decomposition must be the identity matrix. In other words, there exist two John decompositions supported on the inner and rescaled outer contact points of $\CP^n$ with $\B^n$ and $\sqrt{n} \B^n$, respectively, such that they use at most $\frac{n (n+1)}{2} + 1$ points in total. All inner contact points of $\CP^n$ and $\B^n$ are by $\B^n \subseteq K \subseteq \CP^n$ also inner contact points of $K$ and $\B^n$. Since the John decomposition based on the inner contact points uses at least $n$ points, we know that the decomposition for the rescaled outer contact points uses at most $\frac{n (n+1)}{2} + 1 - n$ points, which is less than $2^{n-1}$ for $n \geq 4$. Therefore, $\CP^n$ has at least one vertex $w$ such that $w$ and $-w$ do not appear in the decomposition for the outer contact points. If we choose $v = w$, then all outer contact points between $\CP^n$ and $\B^n$ that appear in the Ader decomposition for $\CP^n$ are still contact points between $K$ and $\B^n$. Altogether, the Ader decomposition for $\CP^n$ is also an Ader decomposition for $K$, which shows $d_{BM}(K,\B^n) = \sqrt{n}$.

It is clear that there is much more freedom in choosing $K$, as the argument works for every symmetric convex body $K'$ such that $K \subseteq K' \subseteq \CP^n$ for $K$ as constructed above. In particular, we can obtain polytopes with an arbitrarily large number of facets as $K'$, but $K'$ does not even need to be a polytope.
\end{remark}

The last goal of this section is to establish a result attributed to Maurey about the uniqueness of distance ellipsoids. It has been mentioned in multiple different papers (cf.~\cite{tomczakstructure}, \cite[Remark~$1.2$]{arias}, or \cite[Theorem~$3.1$]{praetorius}), but no proof has ever been published.
\newpage

A pair of ellipsoids $E_1, E_2 \subseteq \R^n$ is said to be a \cemph{pair of distance ellipsoids} for a symmetric convex body $K \subseteq \R^n$ if $E_1$ and $E_2$ are homothetic with ratio $d_{BM}(K,\B^n)$ and $E_1 \subseteq K \subseteq E_2$. Note that we do not assume anything about the positions of $E_1$ and $E_2$. In particular, they need not necessarily be concentric. The symmetry of $K$ implies that there always exists at least one pair of distance ellipsoids centered at the symmetry center of $K$.

The strongest version of Maurey's result mentioned in the literature (cf.~\cite[Remark~$1.2$]{arias}) states that if an origin symmetric convex body $K \subseteq \R^n$ has two different pairs of origin concentric distance ellipsoids, then there exists a proper linear subspace $U \subseteq \R^n$ such that $K \cap U$ has a unique pair of origin concentric distance ellipsoids and $d_{BM}(K\cap U, \B^n \cap U) = d_{BM}(K,\B^n)$. Here and in the following, $d_{BM}(K \cap U,\B^n \cap U)$ means the Banach-Mazur distance between the two convex bodies relative to their affine hull $U$. It is not entirely clear whether Maurey's original result concerned only ellipsoids centered at the origin. We shall anyway prove the more general version where the ellipsoids are allowed to be arbitrarily centered.

\begin{twr}
\label{thm:maurey}
Let $K \subseteq \R^n$ be an origin symmetric convex body. Then there exists a linear subspace $U \subseteq \R^n$ (possibly $U = \R^n$) such that $d_{BM}(K \cap U, \B^n \cap U) = d_{BM}(K,\B^n)$ and the pair of distance ellipsoids for $K \cap U$ is unique.
\end{twr}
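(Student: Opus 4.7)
I would proceed by induction on $n$, the case $n = 1$ being immediate. For the inductive step, if the pair of distance ellipsoids is already unique we take $U = \R^n$; otherwise I aim to produce a proper linear subspace $U \subsetneq \R^n$ with $d_{BM}(K \cap U, \B^n \cap U) = d_{BM}(K, \B^n) =: d$ and apply the induction hypothesis to the origin-symmetric body $K \cap U$ in $U$. After an affine transformation, $(\B^n, d\B^n)$ is one pair. Given a second pair $(E_1, dE_1)$, the symmetry $K = -K$ makes $(-E_1, -dE_1)$ another pair, and a Minkowski-averaging argument (using that $\tfrac{1}{2}(E_1 + (-E_1))$ is an origin-centered ellipsoid contained in $K$ and that $\tfrac{1}{2}(dE_1 + (-dE_1))$ contains $K$) produces an origin-centered second pair $(E, dE) \neq (\B^n, d\B^n)$, with defining positive definite matrix $A \in \ms{n}$ satisfying $A \neq I$. (In the degenerate case where this averaging procedure returns $(\B^n, d\B^n)$ — precisely when $E_1$ is a translated unit ball — a short direct argument using the symmetry of $K$ in the direction of translation produces the required proper subspace.)

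\medskip

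The central computation couples the Ader decomposition for the first pair with the existence of the second pair. Writing $\sum_i \lambda_i y^i(y^i)^T = \sum_j \mu_j z^j(z^j)^T$ for an Ader decomposition, with $y^i \in \bd(K) \cap \bd(d\B^n)$ and $z^j \in \bd(K) \cap \bd(\B^n)$, the inclusions $E \subseteq K \subseteq dE$ yield $\|y^i\|_E \leq d = \|y^i\|$ and $\|z^j\|_E \geq 1 = \|z^j\|$, equivalent to $\langle y^i, (A-I) y^i\rangle \leq 0$ and $\langle z^j, (A-I) z^j\rangle \geq 0$. Taking the Frobenius inner product of the decomposition with $A - I$ gives an equality between a non-positive and a non-negative quantity, forcing every individual summand to vanish: each contact point satisfies $\langle x, (A-I) x\rangle = 0$ and therefore lies simultaneously on the corresponding boundary of the second pair. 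If $A - I$ happens to be positive or negative semi-definite, the set $\{x : \langle x, (A-I)x\rangle = 0\}$ coincides with the proper linear subspace $U := \ker(A-I)$; the Ader decomposition is then supported in $U$, and restricting it to inputs $x \in U$ yields an Ader decomposition for the pair $(\B^n \cap U, d\B^n \cap U)$ in $U$ with ratio $d$, so Theorem~\ref{thm:ader_cond} (applied in $U$) gives $d_{BM}(K \cap U, \B^n \cap U) = d$, closing the induction.

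\medskip

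The main obstacle is the general case where $A - I$ is indefinite and the zero set above is merely a cone. Here I would invoke Lemma~\ref{lem:mean_ellpsoid} on means of ellipsoids: the set $\mathcal{D} := \{A' > 0 : E_{A'} \subseteq K \subseteq dE_{A'}\}$ of matrices of origin-centered inscribed distance ellipsoids is a compact convex subset of $\ms{n}$ (its elements' eigenvalues being constrained to $[1/d^2, d^2]$), contains $I$, and contains $A \neq I$ by hypothesis. Averaging members of $\mathcal{D}$ as in the lemma — combined with an extremality argument over a suitable functional on $\mathcal{D}$ — should produce an element $A^* \in \mathcal{D} \setminus \{I\}$ with $A^* - I$ semi-definite, reducing the situation to the easy case above via $U = \ker(A^* - I)$. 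Producing this semi-definite $A^*$ from an arbitrary indefinite direction in $\mathcal{D} - I$ is the one step of the argument that genuinely requires the mean-of-ellipsoids machinery; everything else is a direct consequence of Ader's decomposition and a straightforward linear algebra computation on the subspace $U$.
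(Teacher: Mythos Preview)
Your Frobenius computation is elegant and correct: pairing the Ader decomposition with $A-I$ does force every contact point $y^i,z^j$ onto the quadric $\{\langle x,(A-I)x\rangle=0\}$. When $A-I$ is semi-definite this quadric is the linear subspace $\ker(A-I)$ and your argument closes. The gap is entirely in the indefinite case, and your proposed fix is headed in the wrong direction.

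You want to manufacture some $A^*\in\mathcal{D}\setminus\{I\}$ with $A^*-I$ semi-definite. But neither the arithmetic mean $I+t(A-I)$ nor the geometric mean $A^\lambda$ from Lemma~\ref{lem:mean_ellpsoid} changes the signature of $A-I$, and a generic ``extremality argument over a suitable functional on $\mathcal{D}$'' does not obviously produce one either. Indeed, a nonzero positive semi-definite $A^*-I$ in $\mathcal{D}$ would (by your own Frobenius argument) force all outer contact points of $K$ with $d\B^n$ into $\ker(A^*-I)$; a negative semi-definite one would confine all inner contact points. There is no reason either contact set must fail to span $\R^n$, so such an $A^*$ need not exist, and you have given no mechanism for finding one.

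The paper avoids this entirely because Lemma~\ref{lem:mean_ellpsoid} does more than assert $E_\lambda\in\mathcal{D}$: its parts~(ii) and~(iii) say that for any $\lambda\in(0,1)$, \emph{all} contact points of $K$ with $E_\lambda$ and $dE_\lambda$ already lie in the proper subspace $V=\ker(A-I)$, regardless of the signature of $A-I$. One then applies Lemma~\ref{lem:contact_space} to $E_\lambda$ rather than to $\B^n$ and inducts. If you prefer to keep your Ader decomposition for $\B^n$, your Frobenius step shows each $z^j$ lies in $\bd(\B^n)\cap\bd(E)$; then the AM--GM step from the proof of Lemma~\ref{lem:mean_ellpsoid} gives $\|z^j\|_{E_\lambda}\le 1$, while $z^j\in\bd(K)$ and $E_\lambda\subseteq K$ give $\|z^j\|_{E_\lambda}\ge 1$, whence equality and $z^j\in V$ (and similarly for the $y^i$). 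Either route works; searching for a semi-definite $A^*-I$ does not.
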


Before giving the proof, we need some auxiliary results. We start with the following standard fact about polar ellipsoids, which follows immediately from the Cauchy-Schwarz inequality.

\begin{lem}
\label{lem:polar_ellipsoid}
Suppose that vectors $v^1, \ldots, v^n \in \R^n$ form an orthonormal basis, $\alpha_1, \ldots, \alpha_n > 0$ are reals, and $E \subseteq \R^n$ is an ellipsoid defined as
$$E = \left\{ x \in \R^n : \sum_{i=1}^n \frac{\langle x, v^i \rangle^2}{\alpha_i^{2}} \leq 1 \right\}.$$
Then the ellipsoid
$$F = \left\{ y \in \R^n : \sum_{i=1}^n \alpha_i^{2} \langle y, v^i \rangle^2 \leq 1 \right\} $$
is the polar of $E$, and for any $x \in \R^n$ with $\|x\|_E=1$, the unique vector $y \in F$ satisfying $\langle x, y \rangle =1$ (i.e., that a hyperplane perpendicular to $y$ supports $E$ at $x$) is $y = \sum_{i=1}^n \frac{\langle x, v^i \rangle }{\alpha_i^2} v^i$.
\end{lem}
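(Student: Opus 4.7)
The plan is to reduce the verification to a direct application of the Cauchy--Schwarz inequality in the orthonormal coordinates provided by $v^1, \ldots, v^n$, where $E$ and $F$ become axis-aligned ellipsoids with reciprocal semi-axes. For any $x, y \in \R^n$, I would write
\[ \langle x, y \rangle = \sum_{i=1}^n \langle x, v^i \rangle \langle y, v^i \rangle = \sum_{i=1}^n \frac{\langle x, v^i \rangle}{\alpha_i} \cdot \alpha_i \langle y, v^i \rangle \]
and apply Cauchy--Schwarz to obtain $|\langle x, y \rangle| \leq \|x\|_E \cdot \|y\|_F$, where I set $\|y\|_F^2 := \sum_{i=1}^n \alpha_i^2 \langle y, v^i \rangle^2$. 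From this, the inclusion $F \subseteq E^\circ$ is immediate, since $\|x\|_E \leq 1$ and $\|y\|_F \leq 1$ force $\langle x, y \rangle \leq 1$.

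For the reverse inclusion $E^\circ \subseteq F$, I would saturate the inequality. Given $y \notin F$ (so $\|y\|_F > 1$), I choose $x := \|y\|_F^{-1} \sum_{i=1}^n \alpha_i^2 \langle y, v^i \rangle v^i$, directly verify $\|x\|_E = 1$, and compute $\langle x, y \rangle = \|y\|_F > 1$, which shows $y \notin E^\circ$. Hence $E^\circ = F$.

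For the second assertion, set $y := \sum_{i=1}^n \frac{\langle x, v^i \rangle}{\alpha_i^2} v^i$. A routine computation gives $\|y\|_F^2 = \sum_{i=1}^n \frac{\langle x, v^i \rangle^2}{\alpha_i^2} = \|x\|_E^2 = 1$ and $\langle x, y \rangle = \|x\|_E^2 = 1$, so this $y \in \bd(F)$ satisfies both conditions. Uniqueness then follows from the Cauchy--Schwarz equality case: any $y' \in F$ with $\langle x, y' \rangle = 1$ must satisfy $1 = \langle x, y' \rangle \leq \|x\|_E \cdot \|y'\|_F \leq 1$, forcing $\|y'\|_F = 1$ and proportionality of the coordinate vectors $(\langle x, v^i \rangle / \alpha_i)_{i}$ and $(\alpha_i \langle y', v^i \rangle)_{i}$; combined with the normalization, this pins down $y' = y$.

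I do not anticipate any serious obstacle: the lemma is essentially a clean restatement of the duality between the two reciprocal quadratic forms defining $E$ and $F$, and Cauchy--Schwarz together with its equality case delivers every component of the statement in a uniform way.
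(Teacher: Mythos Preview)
Your proposal is correct and follows essentially the same approach as the paper: both proofs rest on the Cauchy--Schwarz inequality applied to the decomposition $\langle x, y \rangle = \sum_i \frac{\langle x, v^i \rangle}{\alpha_i} \cdot \alpha_i \langle y, v^i \rangle$, together with its equality case to identify the unique supporting direction. Your write-up is slightly more explicit in separating the two inclusions $F \subseteq E^\circ$ and $E^\circ \subseteq F$, whereas the paper leaves the latter implicit in the equality analysis, but this is a presentational rather than a substantive difference.
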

\begin{proof}
For any vectors $x,y \in \R^n$, we have by the Cauchy-Schwarz inequality
$$ \langle x, y \rangle^2 = \left ( \sum_{i=1}^{n} \langle x, v^i \rangle \langle y, v^i \rangle \right )^2 \leq \left ( \sum_{i=1}^n \frac{\langle x, v^i \rangle^2}{\alpha_i^{2}} \right ) \left ( \sum_{i=1}^n \alpha_i^{2} \langle y, v^i \rangle^2 \right ) = \|x\|_E^2 \|y\|_F^2. $$
Hence, the inequality $\langle x, y \rangle \leq \|x\|_E \|y\|_{F}$ is true. Moreover, by the equality condition in the Cauchy-Schwarz inequality, we see for $\| x \|_E = 1$ that equality holds if and only if for some $t \geq 0$, we have $\alpha_i \langle y, v^i \rangle = t \frac{\langle x, v^i \rangle}{\alpha_i}$ for every $i \in \{1, \ldots, n\}$. In this case, $\langle x, y \rangle = 1$ holds precisely when $t=1$, so the conclusion follows.
\end{proof}

The next lemma provides a way of taking means of two different ellipsoids giving the Banach-Mazur distance to the Euclidean ball for a convex body $K \subseteq \R^n$. Clearly, the assumption that one of these ellipsoids is $\B^n$ is not actually restrictive, as this is only a question of applying a suitable affine transformation to $K$. The main benefit provided by the mean ellipsoids is that their contact points with $K$ are reduced to be in a particular subspace (see Figure~\ref{fig:mean_ellipsoid} for an example).

The result is inspired by and can be partly derived from geometric means of ellipsoids (cf.~\cite{brandenberg,milmanrotem}). However, since parts (ii) and (iii) require a closer analysis, we provide a complete proof that does not require familiarity with the notion of geometric means of ellipsoids.

\begin{lem}
\label{lem:mean_ellpsoid}
Let $K \subseteq \R^n$ be a convex body such that $\B^n \subseteq K \subseteq d \B^n$ for some $d \geq 1$. Moreover, suppose that vectors $v^1, \ldots, v^n \in \R^n$ form an orthonormal basis, $\alpha_1, \ldots, \alpha_n>0$ are reals, $\lambda \in [0,1]$ is a real parameter, and the ellipsoid $E_{\lambda} \subseteq \R^n$ is defined as
$$ E_\lambda = \left\{ x \in \R^n : \sum_{i=1}^n \frac{\langle x, v^i \rangle^2}{\alpha_i^{2 \lambda}} \leq 1 \right\}. $$
Let $V = \lin\{ v^i : \alpha_i=1, i=1\ldots,n\}$. If $E_1 \subseteq K \subseteq d E_1$, we have for every $\lambda \in (0,1)$ that
\begin{enumerate}[(i)]
\item $E_\lambda \subseteq K \subseteq d E_\lambda$,
\item $\bd(K) \cap \bd(d E_\lambda) \subseteq V$, and
\item $\bd(K) \cap \bd( E_\lambda) \subseteq V$.
\end{enumerate}
\end{lem}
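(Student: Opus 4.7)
The common thread across all three statements is a Hölder-type interpolation. For any $x \in \R^n$, writing $\frac{\langle x, v^i\rangle^2}{\alpha_i^{2\lambda}} = (\langle x, v^i\rangle^2)^{1-\lambda}\left(\frac{\langle x, v^i\rangle^2}{\alpha_i^2}\right)^{\lambda}$ and applying Hölder's inequality with exponents $\frac{1}{1-\lambda}$ and $\frac{1}{\lambda}$ yields the key estimate
$$\|x\|_{E_\lambda}^2 \leq \|x\|_{E_0}^{2(1-\lambda)} \|x\|_{E_1}^{2\lambda},$$
since $\|x\|_{E_0}^2 = \sum_i\langle x, v^i\rangle^2$ and $\|x\|_{E_1}^2 = \sum_i\frac{\langle x, v^i\rangle^2}{\alpha_i^2}$. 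For (i), the upper inclusion $K \subseteq d E_\lambda$ is immediate from this estimate, since $x \in K$ satisfies $\|x\|_{E_0}, \|x\|_{E_1} \leq d$. For the inner inclusion $E_\lambda \subseteq K$, I switch to polars: by Lemma~\ref{lem:polar_ellipsoid}, $E_\mu^\circ$ is built from the reciprocals $\alpha_i^{-1}$ in the same way $E_\mu$ is built from the $\alpha_i$, so the analogous estimate gives $\|y\|_{E_\lambda^\circ}^2 \leq \|y\|_{E_0^\circ}^{2(1-\lambda)}\|y\|_{E_1^\circ}^{2\lambda} \leq 1$ for $y \in K^\circ \subseteq E_0^\circ \cap E_1^\circ$. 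Thus $K^\circ \subseteq E_\lambda^\circ$, equivalently $E_\lambda \subseteq K$.

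For (ii), if $x \in \bd(K) \cap \bd(d E_\lambda)$, then the chain $d = \|x\|_{E_\lambda} \leq \|x\|_{E_0}^{1-\lambda}\|x\|_{E_1}^{\lambda} \leq d$ collapses to equality, and the equality case of Hölder forces $\alpha_i^2$ to share a common value $c$ across all indices with $\langle x, v^i\rangle \neq 0$, while simultaneously forcing $\|x\|_{E_0} = \|x\|_{E_1} = d$. Comparing $\sum_{i:\langle x,v^i\rangle\neq 0} \langle x, v^i\rangle^2 = d^2$ with $\sum_{i:\langle x,v^i\rangle\neq 0} \langle x, v^i\rangle^2/c = d^2$ gives $c = 1$, i.e., $\alpha_i = 1$ whenever $\langle x, v^i\rangle \neq 0$. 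Hence $x \in V$.

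For (iii) the same direct approach fails, because $\|x\|_{E_\lambda} = 1$ together with $\|x\|_{E_0}, \|x\|_{E_1} \geq 1$ does not force equality anywhere. I instead pass to the outer normal direction at $x$: the ellipsoid $E_\lambda$ is strictly convex, and since $E_\lambda \subseteq K$ by (i) and $x \in \bd(K) \cap \bd(E_\lambda)$, any hyperplane supporting $K$ at $x$ must also support $E_\lambda$ at $x$, so the supporting hyperplane of $K$ at $x$ is unique and coincides with that of $E_\lambda$. Normalizing so that $\langle x, y \rangle = 1$, Lemma~\ref{lem:polar_ellipsoid} identifies the outer normal as $y = \sum_i \frac{\langle x, v^i\rangle}{\alpha_i^{2\lambda}} v^i \in \bd(E_\lambda^\circ) \cap \bd(K^\circ)$. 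Applying the polar analogue of (ii) — proved by the identical Hölder equality argument with $\alpha_i$ replaced by $\alpha_i^{-1}$, and with the same subspace $V$ since $\alpha_i = 1$ iff $\alpha_i^{-1} = 1$ — yields $y \in V$. Finally, $\langle y, v^i\rangle = \langle x, v^i\rangle/\alpha_i^{2\lambda}$ vanishes precisely when $\langle x, v^i\rangle$ does, so $x \in V$ as well. The main obstacle throughout is the clean polar bookkeeping needed for (iii); everything else reduces to the single Hölder interpolation and its equality case.
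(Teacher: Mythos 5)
Your proof is correct and follows essentially the same route as the paper: an interpolation inequality (you use Hölder where the paper uses the equivalent weighted AM--GM $\alpha^{-2\lambda} \leq \lambda\alpha^{-2} + (1-\lambda)$), its equality case for (ii), and duality via Lemma~\ref{lem:polar_ellipsoid} for the inner inclusion and for transferring the boundary condition to the outer normal in (iii). The only cosmetic difference is that the multiplicative Hölder form requires the extra step of pinning the proportionality constant to $1$ by comparing $\|x\|_{E_0}$ with $\|x\|_{E_1}$, whereas the additive AM--GM form yields ``$\langle x, v^i\rangle = 0$ or $\alpha_i = 1$'' termwise.
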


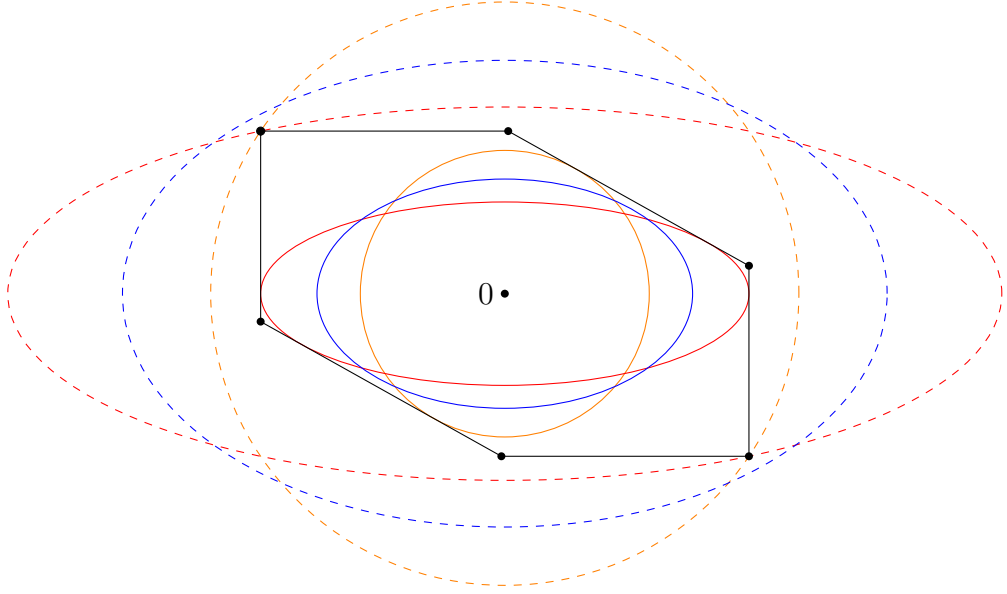
\begin{figure}[ht]
\def\a{1.69}
\def\sa{1.3}
\def\b{0.64}
\def\sb{0.8}
\def\d{sqrt((\a^2-\b^2)/(1-\b^2))}
\def\scale{1.6}
\centering
\begin{tikzpicture}[scale=\scale]
\draw[orange] (0,0) circle(1);
\draw[orange,dashed] (0,0) circle(\d);

\draw[red] (0,0) ellipse (\a cm and \b cm);
\draw[red,dashed] (0,0) ellipse (\d*\a cm and \d*\b cm);

\draw[blue] (0,0) ellipse (\sa cm and \sb cm);
\draw[blue,dashed] (0,0) ellipse (\d*\sa cm and \d*\sb cm);

\draw (-\a,{\b*sqrt((\a^2-1)/(1-\b^2))}) circle(\ds)
        -- (-\a,{-(sqrt(\a^2-\b^2)-\a*sqrt(1-\b^2))/sqrt(\a^2-1)})
        -- ({-(sqrt((\a^2-\b^2)*(1-\b^2))-(\a^2-1)*\b)/(1-\b^2)},
            {-\b*sqrt((\a^2-1)/(1-\b^2))})
        -- (\a,{-\b*sqrt((\a^2-1)/(1-\b^2))})
        -- (\a,{(sqrt(\a^2-\b^2)-\a*sqrt(1-\b^2))/sqrt(\a^2-1)})
        -- ({(sqrt((\a^2-\b^2)*(1-\b^2))-(\a^2-1)*\b)/(1-\b^2)},
            {\b*sqrt((\a^2-1)/(1-\b^2))})
        -- cycle;

\fill ({(sqrt((\a^2-\b^2)*(1-\b^2))-(\a^2-1)*\b)/(1-\b^2)},
            {\b*sqrt((\a^2-1)/(1-\b^2))}) circle(\ds);
\fill ({-(sqrt((\a^2-\b^2)*(1-\b^2))-(\a^2-1)*\b)/(1-\b^2)},
            {-\b*sqrt((\a^2-1)/(1-\b^2))}) circle(\ds);

\fill (\a,{(sqrt(\a^2-\b^2)-\a*sqrt(1-\b^2))/sqrt(\a^2-1)}) circle(\ds);
\fill (-\a,{-(sqrt(\a^2-\b^2)-\a*sqrt(1-\b^2))/sqrt(\a^2-1)}) circle(\ds);

\fill (-\a,{\b*sqrt((\a^2-1)/(1-\b^2))}) circle(\ds);
\fill (\a,{-\b*sqrt((\a^2-1)/(1-\b^2))}) circle(\ds);

\fill (0,0) circle(\ds) node[anchor=east] {$0$};
\end{tikzpicture}
\caption{
An example for Lemma~\ref{lem:mean_ellpsoid}: $K$ (black), $\B^2$ (orange, solid), $E_1$ (red, solid), $E_{1/2}$ (blue, solid). The dashed ellipses are obtained from the solid ellipses by scaling with factor $d \approx 2$. Neither of the principal semi-axes of $E_1$ has length $1$, so $\bd(K)$ is guaranteed to not intersect $\bd(E_\lambda)$ and $\bd(d E_\lambda)$ for any $\lambda \in (0,1)$.
}
\label{fig:mean_ellipsoid}
\end{figure}

\begin{proof} Let us take any vector $x \in K$. By the inequality of weighted arithmetic and geometric means, we have for any $\alpha > 0$ and $\lambda \in (0, 1)$ that
$$ \frac{1}{\alpha^{2 \lambda}} \leq \frac{\lambda}{\alpha^2} + (1-\lambda). $$
Hence,
\begin{align*}
\|x\|_{E_\lambda}^2
& = \sum_{i=1}^n \frac{\langle x, v^i \rangle^2}{\alpha_i^{2 \lambda}}
\leq \sum_{i=1}^n \langle x, v^i \rangle^2 \left( \frac{\lambda}{\alpha_i^2} + (1-\lambda) \right)
= \lambda \sum_{i=1}^n \frac{\langle x, v^i \rangle^2 }{\alpha_i^2} + (1-\lambda) \sum_{i=1}^n \langle x, v^i \rangle^2  \\
& =\lambda \|x\|_{E_1}^2 + (1-\lambda) \|x\|^2
\leq \lambda d^2 + (1-\lambda) d^2
= d^2,
\end{align*}
so $x \in d E_\lambda$. By the equality case in the inequality of arithmetic and geometric means, the equality $\|x\|_{E_\lambda} = d$ holds only if we have $\langle x, v^i \rangle  = 0$ or $\alpha_i = 1$ for every $i \in \{1, \ldots, n\}$. Since the vectors $v^1, \ldots, v^n$ form an orthonormal basis of $\R^n$, we obtain $x = \sum_{i=1}^n \langle x, v^i \rangle v^i \in V$ in this case. As $x \in K$ has been chosen arbitrarily, we have $K \subseteq d E_\lambda$ and $\bd(K) \cap \bd(d E_\lambda) \subseteq V$.

To establish the other inclusion and (iii) from the first part of the proof, it is enough to use a duality argument. Indeed, the inclusions $\B^n \subseteq K$ and $E_1 \subseteq K$ imply that $K^{\circ} \subseteq \B^n$ and $K^{\circ} \subseteq E_1^{\circ}$. Hence, by Lemma~\ref{lem:polar_ellipsoid} and the same reasoning as in the previous part, we get that $K^{\circ} \subseteq E_{\lambda}^{\circ}$, which yields the desired inclusion $E_{\lambda} \subseteq K$. Moreover, if $y \in K^{\circ}$ satisfies $\|y\|_{E_{\lambda}^{\circ}}=1$, then $y \in V$. To prove the third part of the lemma, let us now take $x \in \bd(K) \cap \bd(E_\lambda)$ and a vector $y \in \bd(K^{\circ}) \cap \bd(E^{\circ}_\lambda)$ with $\langle x, y \rangle = 1$. We already know that $y \in V$ and it follows from Lemma~\ref{lem:polar_ellipsoid} that $y = \sum_{i=1}^n \frac{\langle x, v^i \rangle }{\alpha_i^2} v^i$. Thus, if for some $i \in \{1, \ldots, n\}$ we have $\alpha_i \neq 1$, then we must have $\langle x, v^i \rangle = 0$. This shows that $x$ also lies in the subspace $V$, and the conclusion follows.
\end{proof}

The following lemma, which is a straightforward consequence of the Ader decomposition, provides the final, crucial ingredient for our proof of Theorem~\ref{thm:maurey}.

\begin{lem}
\label{lem:contact_space}
Let $K \subseteq \R^n$ be an origin symmetric convex body and let $R \geq r > 0$ be such that $r\B^n \subseteq K \subseteq R\B^n$ and $\frac{R}{r} = d_{BM}(K,\B^n)$. If $U \subseteq \R^n$ is a linear subspace with $\bd(K) \cap \bd(R\B^n) \subseteq U$ or $\bd(K) \cap \bd(r\B^n) \subseteq U$, then $d_{BM}(K \cap U,\B^n \cap U) = d_{BM}(K,\B^n)$.
\end{lem}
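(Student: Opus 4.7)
The plan is to invoke Theorem~\ref{thm:ader_cond} twice: once in $\R^n$ to produce an Ader decomposition for $K$, and once in $U$ to convert the restricted decomposition into a certificate that $d_{BM}(K \cap U, \B^n \cap U) = R/r$. The containment $r(\B^n \cap U) \subseteq K \cap U \subseteq R(\B^n \cap U)$ already gives $d_{BM}(K \cap U, \B^n \cap U) \leq R/r$, so only the reverse inequality requires work.

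By Theorem~\ref{thm:ader_cond}~(i)~$\Rightarrow$~(iii), fix an Ader decomposition
$$ \sum_{i=1}^N \lambda_i \langle x, y^i \rangle y^i = \sum_{j=1}^M \mu_j \langle x, z^j \rangle z^j, \quad x \in \R^n, $$
for $K$. Suppose $\bd(K) \cap \bd(R\B^n) \subseteq U$; the case $\bd(K) \cap \bd(r\B^n) \subseteq U$ is identical after swapping the roles of the two sides. Then each $y^i \in U$, so for any $x \in U^\perp$ the left side vanishes, leaving $\sum_j \mu_j \langle x, z^j \rangle z^j = 0$. Pairing this identity with $x$ gives $\sum_j \mu_j \langle x, z^j \rangle^2 = 0$, and since every $\mu_j > 0$, we deduce $\langle x, z^j \rangle = 0$ for all $x \in U^\perp$ and all $j$. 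Thus each $z^j$ lies in $U$ as well.

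Having placed all contact points in $U$, restrict the decomposition to $x \in U$; both sides remain in $U$ and the identity holds within $U$. Each $y^i$ satisfies $\|y^i\| = R$, so it lies on $\bd_U(R(\B^n \cap U))$, and the hyperplane $\{x \in \R^n : \langle x, y^i \rangle = R^2\}$, which is the common tangent of $K$ and $R\B^n$ at $y^i$, intersects $U$ in a hyperplane of $U$ supporting $K \cap U$ at $y^i$; hence $y^i \in \bd_U(K \cap U)$. An analogous argument handles each $z^j$ using the hyperplane $\{x \in \R^n : \langle x, z^j \rangle = r^2\}$. Thus the original decomposition is an Ader decomposition for $K \cap U$ inside the ambient space $U$ endowed with the restricted Euclidean norm, and applying Theorem~\ref{thm:ader_cond}~(iii)~$\Rightarrow$~(i) inside $U$ yields $d_{BM}(K \cap U, \B^n \cap U) = R/r$.

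The main step is the propagation argument showing that when the outer contact points lie in $U$ then so do the inner ones, which uses the positivity of the weights $\mu_j$ combined with the Ader identity evaluated on $U^\perp$. Once this is established, verifying that the decomposition still consists of genuine contact pairs for the restricted bodies in $U$ is routine.
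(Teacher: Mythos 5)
Your proof is correct and follows essentially the same route as the paper: take an Ader decomposition for $K$, evaluate the identity on $U^\perp$ and use the positivity of the weights to propagate membership in $U$ from one family of contact points to the other, then read off the restricted identity as an Ader decomposition for $K \cap U$ relative to $U$ and apply Theorem~\ref{thm:ader_cond} there. The only cosmetic difference is that you verify explicitly that the contact points remain contact points for the restricted bodies, which the paper leaves implicit.
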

\begin{proof}
We may assume that $U \neq \R^n$, as otherwise the result is trivially true. Let an Ader decomposition as in Theorem~\ref{thm:ader_cond}~(iii) be given. Assume that all $y^i$ belong to $U$, but $z^1$ does not. Take any $u \in U^\perp$ such that $\langle u, z^1 \rangle \neq 0$. Then we obtain from the Ader decomposition that
$$ 0 = \sum_{i=1}^N \lambda_i \langle u, y^i \rangle^2 = \sum_{i=1}^M \mu_i \langle u, z^i \rangle^2 \geq \mu_1 \langle u, z^1 \rangle^2 > 0, $$
which is a contradiction. Thus, if all $y^i$ belong to $U$, then so do all $z^i$. Similarly, we conclude that if all $z^i$ belong to $U$, then so do all $y^i$. Altogether, we obtain that there also exists an Ader decomposition for $K \cap U$ relative to $U$. Since the Euclidean in- and circumradius of $K \cap U$ are $r$ and $R$, respectively, we obtain $d_{BM}(K \cap U, \B^n \cap U) = \frac{R}{r} = d_{BM}(K,\B^n)$ as desired.
\end{proof}

As a by-product of the previous lemma, we obtain an optimal estimate on the number of contact points between a symmetric convex body and its distance ellipsoid. We provide the details in the following remark.

\begin{remark}
\label{rem:contact_pairs}
Lewis proved in \cite[Theorem~$2.1$]{lewis} that for any pair of origin symmetric convex bodies in $\R^n$ (where $n \geq 2$), for any linear transformations realizing the Banach-Mazur distance there must exist at least two antipodal pairs of inner and outer contact points. The previous lemma allows us to improve this observation in the case where one of the convex bodies is the Euclidean ball. Indeed, let $K \subseteq \R^n$ be an origin symmetric convex body and let $R \geq r > 0$ be such that $r\B^n \subseteq K \subseteq R\B^n$ and $\frac{R}{r} = d_{BM}(K,\B^n)$. Let $U$ be the linear subspace spanned by the inner (or outer) contact points. The previous lemma and the general upper bound on the distance to the Euclidean ball show that
$$ d_{BM}(K,\B^n) = d_{BM}(K \cap U, \B^n \cap U) \leq \sqrt{\dim(U)}. $$
It follows that for any origin symmetric convex body $K$, the number of linearly independent inner (or outer) contact points is at least $\left\lceil d_{BM}(K, \B^n)^2 \right\rceil$. This estimate is sharp by the example given in \cite[Theorem~$4.3$~(a)]{praetorius}.
\end{remark}

We finally turn to the proof of Theorem~\ref{thm:maurey}.

\begin{proof}[Proof of Theorem~\ref{thm:maurey}]
We proceed by induction on $n$. If $n=1$, the result is trivially true since we can choose the subspace $U = \R^1$. Let us assume that $n \geq 2$. By the origin symmetry of $K$, there exists a pair of origin concentric distance ellipsoids $(E_1,E_2)$ for $K$. If there is no different pair of distance ellipsoids, then there is nothing to prove since we can take $U = \R^n$. Thus, let $(E_1',E_2') \neq (E_1, E_2)$ be a different pair of distance ellipsoids for $K$. By applying an appropriate linear transformation if necessary, we may assume that $E_1'$ and $E_2'$ are Euclidean balls. Let $F_1'$ and $F_2'$ be the origin symmetric translates of $E_1'$ and $E_2'$, respectively. By the origin symmetry of $K$, we have $F_1' \subseteq K \subseteq F_2'$.

First, we show that actually $F_2' = E_2'$. To this end, let $v \in \R^n$ be the center of $E_2'$, i.e., $E_2' = F_2' + v$. If $v\neq 0$, then $K, K + v \subseteq E_2'$ and the strict convexity of $E_2'$ would imply for $x \in K$ that
$$ x + \frac{v}{2} = \frac{1}{2} ( x + (x+v) ) \in \inte(E_2'). $$
However, this would mean that some smaller homothet of $E_2'$ contains $K$, which would contradict the fact that $(E_1',E_2')$ is a pair of distance ellipsoids for $K$. Thus, we have $v=0$ and $F_2' = E_2'$ as claimed.

Next, suppose that $F_1' \neq E_1'$. Let $w \in \R^n \setminus \{0\}$ be the center of $E_1'$, i.e., $E_1' = F_1' + w$. By the origin symmetry of $K$, we clearly have $F_1' - w = -E_1' \subseteq K$. From the inclusions $F_1' - w, F_1' + w \subseteq K$ and the fact that $F_1'$ is a Euclidean ball, it follows that every common boundary point of $K$ and $F_1'$ must lie in the proper linear subspace $U' = \lin\{w\}^\perp$. Indeed, $F_1 \subseteq  \conv( (F_1' - w) \cup (F_1' + w)) \subseteq K$ and it is easy to verify that every common boundary point of $F_1'$ and $\conv( (F_1' - w) \cup (F_1' + w))$ lies in $U'$. Consequently, Lemma~\ref{lem:contact_space} shows that $d_{BM}(K \cap U', \B^n \cap U') = d_{BM}(K,\B^n)$. Applying the induction hypothesis to $K \cap U'$ completes the case $F_1' \neq E_1'$. Thus, we may from now on assume that $F_1' = E_1'$.
\newpage

Finally, since $(F_1',F_2') = (E_1',E_2') \neq (E_1,E_2)$, Lemma~\ref{lem:mean_ellpsoid} shows that there exists another pair of distance ellipsoids for $K$ centered at the origin such that all contact points of $K$ with the new inner and outer ellipsoid lie in some proper subspace $U'' \subsetneq \R^n$. Lemma~\ref{lem:contact_space} thus shows $d_{BM}(K \cap U'',\B^n \cap U'') = d_{BM}(K,\B^n)$. Applying the induction hypothesis to $K \cap U''$ completes the proof.
\end{proof}

We end this section with an immediate consequence of Theorem~\ref{thm:maurey}. A very similar result is also given in \cite[Corollary~$3.2$]{praetorius}, though the statement there is restricted to pairs of origin concentric distance ellipsoids and its proof was based on the at that time still unpublished result by Maurey. See also \cite[Satz~$5$]{behrend} for the planar case, where arbitrary translations of the distance ellipsoids are included.

\begin{cor}
\label{cor:unique}
Let $K \subseteq \R^n$ be a symmetric convex body with $d_{BM}(K,\B^n) > \sqrt{n-1}$. Then, the pair of distance ellipsoids for $K$ is unique. If $d_{BM}(K,\B^n) = \sqrt{n}$, then this unique pair of distance ellipsoids consists of the John and Loewner ellipsoids of $K$. In particular, the pair of distance ellipsoids is unique for any planar symmetric convex body.
\end{cor}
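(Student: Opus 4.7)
The plan is to derive all three statements as direct consequences of Theorem~\ref{thm:maurey} combined with the bound from Corollary~\ref{cor:estimate}. For the uniqueness claim, I would apply Theorem~\ref{thm:maurey} to $K$ to extract a linear subspace $U \subseteq \R^n$ such that $d_{BM}(K \cap U, \B^n \cap U) = d_{BM}(K, \B^n)$ and the pair of distance ellipsoids of $K \cap U$ within $U$ is unique. The key observation is that $U$ cannot be a proper subspace: otherwise $\dim U \leq n-1$, and Corollary~\ref{cor:estimate} applied relative to $U$ would give $d_{BM}(K \cap U, \B^n \cap U) \leq \sqrt{n-1}$, contradicting the hypothesis $d_{BM}(K, \B^n) > \sqrt{n-1}$. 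Hence $U = \R^n$ and the uniqueness for $K \cap U$ is just the uniqueness for $K$.

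For the case $d_{BM}(K, \B^n) = \sqrt{n}$, the strict inequality $\sqrt{n} > \sqrt{n-1}$ means that the first step already yields uniqueness, so it only remains to identify the unique pair as the John-Loewner pair. Placing $K$ so that its John ellipsoid coincides with $\B^n$ and invoking Corollary~\ref{cor:estimate} gives $E_J \subseteq K \subseteq \sqrt{n} E_J$, making $(E_J, \sqrt{n} E_J)$ a pair of distance ellipsoids. The analogous reasoning applied to $K^\circ$, combined with the standard duality identifying the polar of the John ellipsoid of $K^\circ$ with the Loewner ellipsoid of $K$, yields the dual chain $\frac{1}{\sqrt{n}} E_L \subseteq K \subseteq E_L$, so $(\frac{1}{\sqrt{n}} E_L, E_L)$ is likewise a pair of distance ellipsoids. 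Uniqueness forces these two pairs to coincide, whence $E_L = \sqrt{n} E_J$ and the unique pair is indeed $(E_J, E_L)$.

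For the planar case I would split according to whether $K$ is an ellipse. If $K$ is an ellipse then $d_{BM}(K, \B^2) = 1$; the two members of any pair of distance ellipsoids are translates of one another (since the homothety ratio is $1$), and the inclusions $E_1 \subseteq K \subseteq E_2$ then force $E_1 = E_2 = K$, giving trivial uniqueness. If $K$ is not an ellipse then $d_{BM}(K, \B^2) > 1 = \sqrt{2-1}$, and the first step applies directly. I do not anticipate any serious obstacle: the only non-mechanical ingredient is the polarity exchange between John and Loewner ellipsoids used in the second paragraph, which is a classical fact in the symmetric setting.
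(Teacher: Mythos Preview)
Your proposal is correct and follows essentially the same route as the paper: apply Theorem~\ref{thm:maurey}, use the $\sqrt{\dim U}$ bound to force $U=\R^n$, and then identify the unique pair via the John and Loewner ellipsoids. The only differences are cosmetic---the paper invokes directly that both the John and Loewner ellipsoids yield the $\sqrt{n}$ bound (rather than passing through $K^\circ$), and it leaves the trivial ellipse case in the planar statement implicit, whereas you spell it out.
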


\begin{proof}
Without loss of generality we may assume that $K$ is origin symmetric. Let $U \subseteq \R^n$ be the subspace obtained from Theorem~\ref{thm:maurey}. The general upper bound on the distance to the Euclidean ball shows that
$$ n-1 < d_{BM}(K,\B^n)^2 = d_{BM}(K \cap U, \B^n \cap U)^2 \leq \dim(U), $$
which implies $U = \R^n$. Thus, the pair of distance ellipsoids for $K$ is unique. If the distance is equal to $\sqrt{n}$, then the pair of distance ellipsoids consists of the John and Loewner ellipsoids since they both lead to the upper bound of $\sqrt{n}$ on the Banach-Mazur distance.
\end{proof}

\section{Planar Symmetric Convex Bodies With Almost Maximal Distance to the Euclidean Disc}
\label{sec:2d}

In this section, we prove Theorem~\ref{thm:stability}, that is, a stability of the parallelogram as the unique symmetric convex body with the maximal Banach-Mazur distance to the Euclidean disc. We base our argument on John decompositions instead of Ader decompositions here since knowing the matrix in the decomposition explicitly appears more straightforward to work with. Our main idea is to show for a symmetric convex body $K \subseteq \R^2$ with $d_{BM}(K,\B^2)$ close to $\sqrt{2}$ and John ellipse $\B^2$ that any John decomposition contains a pair of almost orthogonal vectors. From this, we derive that $K$ must be close to a certain square. In other words, we provide a stability version of Lemma~\ref{lem:n_John_decomp} in the planar case. The following lemma is the key to executing this idea and obtaining the linear upper bound in the final stability estimate.

\begin{lem}
\label{lem:orth}
Let $K \subseteq \R^2$ be an origin symmetric convex body with John ellipse $\B^2$. For $v \in K \setminus \{ 0 \}$, let $e^1, e^2$ be the two orthogonal Euclidean unit vectors satisfying $\langle v, e^1 \rangle = \langle v, e^2 \rangle = \frac{\|v\|}{\sqrt{2}}$. Then for any $x \in K$ and $i = 1,2$, we have
$$|\langle x, e^i \rangle | \leq \frac{\sqrt{2}}{\|v\|}.$$
\end{lem}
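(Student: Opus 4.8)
The plan is to exploit the John decomposition of $K$ together with the geometry forced by having $v\in K$ lying in the direction bisecting $e^1$ and $e^2$. Set $\|v\|=s$, so $v = \frac{s}{\sqrt2}(e^1+e^2)$. Since $\B^2$ is the John ellipse of $K$, Theorem~\ref{thm:john} gives contact points $u^1,\dots,u^N\in\bd(K)\cap\bd(\B^2)$ and weights $\lambda_i>0$ with $x=\sum_i\lambda_i\langle x,u^i\rangle u^i$ for all $x$, and $\sum_i\lambda_i=2$. Writing each $u^i$ in the orthonormal basis $\{e^1,e^2\}$ as $u^i=(\cos\theta_i,\sin\theta_i)$, the John decomposition applied coordinate-wise is equivalent to the three scalar identities $\sum_i\lambda_i\cos^2\theta_i=1$, $\sum_i\lambda_i\sin^2\theta_i=1$, and $\sum_i\lambda_i\cos\theta_i\sin\theta_i=0$; the first two already encode $\sum_i\lambda_i=2$ and $\sum_i\lambda_i\cos2\theta_i=0$, and the third says $\sum_i\lambda_i\sin2\theta_i=0$.

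The key step is to extract information from the constraint $v\in K$, i.e.\ that the supporting hyperplane of $K$ at each contact point $u^i$ does not cut off $v$: since $\langle\cdot,u^i\rangle\le 1$ on $K$ (as $u^i$ is a common contact point of $K$ and $\B^2$ with outer normal $u^i$), we get $\langle v,u^i\rangle\le 1$ for every $i$, that is, $\frac{s}{\sqrt2}(\cos\theta_i+\sin\theta_i)\le 1$, hence $\cos\theta_i+\sin\theta_i\le \frac{\sqrt2}{s}$ for all $i$ (applying the same bound to $-v\in K$ and using symmetry of the contact points, one in fact gets $|\cos\theta_i+\sin\theta_i|\le\frac{\sqrt2}{s}$). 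Now I want to bound $|\langle x,e^1\rangle|$ for arbitrary $x\in K$. Using the John decomposition, $\langle x,e^1\rangle=\sum_i\lambda_i\langle x,u^i\rangle\langle u^i,e^1\rangle=\sum_i\lambda_i\langle x,u^i\rangle\cos\theta_i$, and since $|\langle x,u^i\rangle|\le1$ (again because $u^i$ is a contact normal and $x\in K$), we get $|\langle x,e^1\rangle|\le\sum_i\lambda_i|\cos\theta_i|$. The task reduces to showing $\sum_i\lambda_i|\cos\theta_i|\le\frac{\sqrt2}{s}$, and symmetrically $\sum_i\lambda_i|\sin\theta_i|\le\frac{\sqrt2}{s}$.

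The main obstacle is turning the pointwise bounds $|\cos\theta_i+\sin\theta_i|\le\frac{\sqrt2}{s}$ into the weighted-sum bound $\sum_i\lambda_i|\cos\theta_i|\le\frac{\sqrt2}{s}$; the issue is that $|\cos\theta|$ is not controlled by $|\cos\theta+\sin\theta|$ alone (e.g.\ $\theta$ near $-45^\circ$ makes the latter small). This is where the John identities must be used. I would argue as follows: split the indices by the sign of $\cos\theta_i\sin\theta_i$. For indices with $\cos\theta_i\sin\theta_i\ge0$, one has $|\cos\theta_i|\le|\cos\theta_i+\sin\theta_i|\le\frac{\sqrt2}{s}$ directly. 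For the remaining indices the pointwise bound is too weak, so instead I combine the identity $\sum_i\lambda_i\cos\theta_i\sin\theta_i=0$ with $\sum_i\lambda_i(\cos\theta_i+\sin\theta_i)^2 = \sum_i\lambda_i(\cos^2\theta_i+\sin^2\theta_i) + 2\sum_i\lambda_i\cos\theta_i\sin\theta_i = \sum_i\lambda_i = 2$. Hence $\sum_i\lambda_i(\cos\theta_i+\sin\theta_i)^2 = 2$, and since each summand satisfies $(\cos\theta_i+\sin\theta_i)^2\le\frac{2}{s^2}$, comparing with $\sum_i\lambda_i=2$ forces — when $s=\sqrt2$, i.e.\ $v$ of maximal length — every $\theta_i$ to satisfy $(\cos\theta_i+\sin\theta_i)^2=1$; in general it gives strong concentration. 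I would then feed this back through Cauchy–Schwarz: $\big(\sum_i\lambda_i|\cos\theta_i+\sin\theta_i|\big)^2\le\big(\sum_i\lambda_i\big)\big(\sum_i\lambda_i(\cos\theta_i+\sin\theta_i)^2\big)=2\cdot2=4$, so $\sum_i\lambda_i|\cos\theta_i+\sin\theta_i|\le 2$, and similarly handle $|\cos\theta_i-\sin\theta_i|$ using the identity $\sum_i\lambda_i(\cos\theta_i-\sin\theta_i)^2=2$. Writing $2|\cos\theta_i| = |(\cos\theta_i+\sin\theta_i)+(\cos\theta_i-\sin\theta_i)| \le |\cos\theta_i+\sin\theta_i| + |\cos\theta_i-\sin\theta_i|$ and taking weighted sums will bound $2\sum_i\lambda_i|\cos\theta_i|$; the delicate point is that this crude route gives a bound like $\frac{4}{2}=2$ rather than the sharp $\frac{\sqrt2}{s}$, so I expect the actual argument needs to avoid Cauchy–Schwarz on one of the two pieces and instead use the pointwise bound on the "good-sign" indices as above, together with the observation that the "bad-sign" contribution to $\sum_i\lambda_i|\cos\theta_i|$ is matched by an equal contribution to $\sum_i\lambda_i|\sin\theta_i|$ with opposite sign in the product, letting the identity $\sum_i\lambda_i\cos\theta_i\sin\theta_i=0$ absorb it. Making this bookkeeping precise — carefully separating the four sign patterns of $(\cos\theta_i,\sin\theta_i)$ and using symmetry of the contact system to pair them up — is the crux, and I would organize it as a short case analysis rather than a single inequality.
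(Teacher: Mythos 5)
Your proposal has a genuine gap at its central reduction. After expanding $\langle x,e^1\rangle=\sum_i\lambda_i\langle x,u^i\rangle\cos\theta_i$ and applying the triangle inequality with $|\langle x,u^i\rangle|\le 1$, you reduce the lemma to the claim $\sum_i\lambda_i|\cos\theta_i|\le\frac{\sqrt2}{\|v\|}$. This target inequality is \emph{false}. Take $K$ to be the regular hexagon whose John ellipse is $\B^2$: the contact points are the edge midpoints at angles $0^\circ,60^\circ,120^\circ$ (plus antipodes) with the unique weights $\lambda_i=\frac23$, and let $v$ be the vertex at angle $90^\circ$, so $\|v\|=\frac{2}{\sqrt3}$ and $e^1$ points at $45^\circ$. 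Then $\sum_i\lambda_i|\cos\theta_i|=\frac23\left(\cos 45^\circ+\cos 15^\circ+\cos 75^\circ\right)\approx 1.288$, whereas $\frac{\sqrt2}{\|v\|}=\frac{\sqrt6}{2}\approx 1.225$. The lemma itself holds for this $K$ (the maximum of $\langle x,e^1\rangle$ over the hexagon is $\frac{2}{\sqrt3}\cos 15^\circ\approx 1.115$), so the loss occurs precisely in the step where you discard the signs of the coefficients $\langle x,u^i\rangle$; no amount of bookkeeping on the angles $\theta_i$ alone can recover it. Consistent with this, the Cauchy--Schwarz estimates you outline only yield $\sum_i\lambda_i|\cos\theta_i|\le 2$ (or $\sqrt2$ with the sharper identity $\sum_i\lambda_i\cos^2\theta_i=1$), and you yourself flag that the final case analysis is left open, so the argument is also incomplete as written.

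The paper's proof uses the same two ingredients you identify (the John identity $\sum_i\lambda_i\langle w,u^i\rangle^2=\|w\|^2$ and the constraint that $v\in K$ forces $\langle v,u^i\rangle\le 1$), but combines them differently: rather than bounding a weighted sum over \emph{all} contact points, it shows that at least \emph{one} contact point $u^1$ must lie on a specific short arc of $\bd(\B^2)$ determined by the tangent lines from $v$. This is done by applying the observation that every $w\in\bd(\sqrt2\,\B^2)$ satisfies $|\langle w,u^i\rangle|\ge 1$ for some $i$ to a sequence of slightly rotated points $w^k$, and noting that all candidate contact points outside the target arc would lie in $\inte\bigl(\conv(\{v\}\cup\B^2)\bigr)\subseteq\inte(K)$, hence cannot be contact points. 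The explicit coordinates of such a $u^1$, together with $K\subseteq\sqrt2\,\B^2$, then give the sharp bound via the single inequality $\langle x,u^1\rangle\le 1$. If you want to salvage your approach, you would need an argument of this single-contact-point type rather than a weighted-average one.
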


For obtaining the stability result, we are mostly interested in the case when $\|v\|$ is close to $\sqrt{2}$. However, let us point out that the above inequality is tight for any value of $\| v \| \geq 1$, which can be seen by choosing $K = \CP^2$ and $v \in \bd(\CP^2)$.

\begin{proof}
If $d := \|v\| \leq 1$, then the assertion follows from the Cauchy-Schwarz inequality together with the inclusion $K \subseteq \sqrt{2} \B^2$ following from the John Ellipsoid Theorem. We may thus assume that $d > 1$.

By applying an appropriate rotation if necessary, we may assume $e^1 = (1,0)$, $e^2 = (0,1)$, and $v = \frac{d}{\sqrt{2}} (1,1)$. In this case, the two vectors
$$ p^\pm := \frac{1}{\sqrt{2} d} \left( 1 \pm \sqrt{d^2 - 1}, 1 \mp \sqrt{d^2 - 1} \right) $$
are the only vectors $p \in \bd(\B^2)$ satisfying $\langle p, v \rangle = 1$, i.e., $p^{\pm}$ are tangency points of tangents from $v$ to $\B^2$. Defining the two lines
$$ L^\pm := \left\{ x \in \R^2 : \langle p^\pm, x \rangle = 1 \right\}, $$
one can easily verify that
$$ a^\pm := \frac{\sqrt{2}}{d} \left( 1, \mp \sqrt{d^2 - 1} \right)
\quad \text{and} \quad
b^\pm := \frac{\sqrt{2}}{d} \left( \pm \sqrt{d^2 - 1}, 1 \right) $$
satisfy
$$ L^\pm \cap \bd \left( \sqrt{2} \B^2 \right) = \left\{ a^\pm, b^\pm \right\} $$
(cf.~Figure~\ref{fig:stab_lem_points}).

\begin{figure}[ht]
\def\d{1.15}
\def\scale{2}
\centering
\begin{tikzpicture}[scale=\scale]
\draw circle(1);
\draw circle({sqrt(2)});

\draw ({sqrt(2)/\d},{-sqrt(2)/\d*sqrt(\d^2-1)}) -- ({sqrt(2)/\d*sqrt(\d^2-1)},{sqrt(2)/\d});
\draw ({sqrt(2)/\d},{sqrt(2)/\d*sqrt(\d^2-1)}) -- ({-sqrt(2)/\d*sqrt(\d^2-1)},{sqrt(2)/\d});

\fill ({\d/sqrt(2)},{\d/sqrt(2)}) circle(\ds) node[anchor=south west] {$v$};

\fill (1,0) circle(\ds) node[anchor=east] {$e^1$};
\fill (0,1) circle(\ds) node[anchor=north] {$e^2$};

\fill ({sqrt(2)/\d},{-sqrt(2)/\d*sqrt(\d^2-1)}) circle(\ds) node[anchor=north west] {$a^+$};
\fill ({sqrt(2)/\d},{sqrt(2)/\d*sqrt(\d^2-1)}) circle(\ds) node[anchor=south west] {$a^-$};

\fill ({sqrt(2)/\d*sqrt(\d^2-1)},{sqrt(2)/\d}) circle(\ds) node[anchor=south west] {$b^+$};
\fill ({-sqrt(2)/\d*sqrt(\d^2-1)},{sqrt(2)/\d}) circle(\ds) node[anchor=south east] {$b^-$};

\fill ({(1+sqrt(\d^2-1))/(sqrt(2)*\d)},{(1-sqrt(\d^2-1))/(sqrt(2)*\d)}) circle(\ds) node[anchor=east] {$p^+$};
\fill ({(1-sqrt(\d^2-1))/(sqrt(2)*\d)},{(1+sqrt(\d^2-1))/(sqrt(2)*\d)}) circle(\ds) node[anchor=north] {$p^-$};

\fill (0,0) circle(\ds) node[anchor=south west] {$0$};
\end{tikzpicture}
\caption{
An example of the situation in the proof of Lemma~\ref{lem:orth} for $d = \d$.
}
\label{fig:stab_lem_points}
\end{figure}

Since $\B^2$ is the John ellipsoid of $K$, there exist common boundary points $u^1, \ldots, u^N$ of $K$ and $\B^2$ and some weights $\lambda_1, \ldots, \lambda_N > 0$ that form a John decomposition. In this case, we have $\sum_{i=1}^N \lambda_i = 2$.

We observe for any $w \in \bd(\sqrt{2} \B^2)$ that there exists at least one index $i \in \{1, \ldots, N\}$ such that $|\langle w, u^i \rangle| \geq 1$. Indeed, we would otherwise have
$$ 2 = \langle w, w \rangle = \sum_{i=1}^m \lambda_i \langle w, u^i \rangle^2 < \sum_{i=1}^m \lambda_i = 2. $$
Furthermore, if $\theta: \R^2 \to \R^2$ is the counterclockwise rotation by $90^\circ$, then $p^+$ and $\theta(p^+)$ are the only vectors $q \in \bd(\B^2)$ with $\langle q, b^+ \rangle = 1$.

Now, for integer $k \geq 1$ let $\psi_k: \R^2 \to \R^2$ be the counterclockwise rotation by $\frac{1}{k}$ degrees. For any $w^k := \psi_k(b^+)$, the set of vectors $q \in \bd(\B^2)$ with $\langle q, w^k \rangle \geq 1$ is precisely the minor circular arc connecting $\psi_k(p^+)$ and $\psi_k(\theta(p^+))$. It is clear that any such vector $q \neq p^-$ that also lies on the minor circular arc connecting $p^+$ and $p^-$ is in the interior of $\conv(\{v\} \cup \B^2)$ and thus also in the interior of $K$ (cf.~Figure~\ref{fig:stab_lem_rot}). Hence, for any $k \in \N$, there must be, by the above observation applied for $w^k$, at least one contact point $\pm u^i$ that lies on the minor circular arc connecting $p^-$ and $\psi_k(\theta(p^+))$. Since there are only finitely many contact points $\pm u^i$, by letting $k$ tend to $\infty$ we see that at least one of the $\pm u^i$ must lie on the minor circular arc connecting $\theta(p^+)$ and $p^-$. Let us suppose that this is the case for $u^1$.

\begin{figure}[ht]
\def\d{1.15}
\def\deg{20}
\def\scale{3}
\centering
\begin{tikzpicture}[scale=\scale]

\draw (1,0) arc (0:180:1);
\draw ({sqrt(2)},0) arc (0:180:{sqrt(2)});

\draw ({(sqrt(2)*\d))/(1+sqrt(\d^2-1))},0) -- ({sqrt(2)/\d*sqrt(\d^2-1)},{sqrt(2)/\d});
\draw ({sqrt(2)/\d},{sqrt(2)/\d*sqrt(\d^2-1)}) -- ({-sqrt(2)/\d*sqrt(\d^2-1)},{sqrt(2)/\d});
\draw[dashed] ({sqrt(2)/\d*sqrt(\d^2-1)},{sqrt(2)/\d}) -- ({-sqrt(2)/\d},{sqrt(2)/\d*sqrt(\d^2-1)});

\draw[very thick,dotted,red]
    ({cos(\deg)*(1+sqrt(\d^2-1))/(sqrt(2)*\d)-sin(\deg)*(1-sqrt(\d^2-1))/(sqrt(2)*\d)},
     {sin(\deg)*(1+sqrt(\d^2-1))/(sqrt(2)*\d)+cos(\deg)*(1-sqrt(\d^2-1))/(sqrt(2)*\d)})
    arc ({atan((1-sqrt(\d^2-1))/(1+sqrt(\d^2-1)))+\deg}:{atan((1+sqrt(\d^2-1))/(1-sqrt(\d^2-1)))}:1);

\draw[very thick,red]
    ({(1-sqrt(\d^2-1))/(sqrt(2)*\d)},{(1+sqrt(\d^2-1))/(sqrt(2)*\d)})
    arc ({atan((1+sqrt(\d^2-1))/(1-sqrt(\d^2-1)))}:{atan((1-sqrt(\d^2-1))/(1+sqrt(\d^2-1)))+\deg+90}:1);

\draw[dashed,red] ({cos(\deg)*sqrt(2)/\d*sqrt(\d^2-1)-sin(\deg)*sqrt(2)/\d},{sin(\deg)*sqrt(2)/\d*sqrt(\d^2-1)+cos(\deg)*sqrt(2)/\d})
            -- ({cos(\deg)*(1+sqrt(\d^2-1))/(sqrt(2)*\d)-sin(\deg)*(1-sqrt(\d^2-1))/(sqrt(2)*\d)},
                {sin(\deg)*(1+sqrt(\d^2-1))/(sqrt(2)*\d)+cos(\deg)*(1-sqrt(\d^2-1))/(sqrt(2)*\d)});

\draw[dashed,red] ({cos(\deg)*sqrt(2)/\d*sqrt(\d^2-1)-sin(\deg)*sqrt(2)/\d},{sin(\deg)*sqrt(2)/\d*sqrt(\d^2-1)+cos(\deg)*sqrt(2)/\d})
            -- ({-cos(\deg)*(1-sqrt(\d^2-1))/(sqrt(2)*\d)-sin(\deg)*(1+sqrt(\d^2-1))/(sqrt(2)*\d)},
                {-sin(\deg)*(1-sqrt(\d^2-1))/(sqrt(2)*\d)+cos(\deg)*(1+sqrt(\d^2-1))/(sqrt(2)*\d)});

\fill ({\d/sqrt(2)},{\d/sqrt(2)}) circle(\ds) node[anchor=south west] {$v$};

\fill (0,1) circle(\ds) node[anchor=north] {$e^2$};

\fill ({sqrt(2)/\d},{sqrt(2)/\d*sqrt(\d^2-1)}) circle(\ds) node[anchor=south west] {$a^-$};

\fill ({sqrt(2)/\d*sqrt(\d^2-1)},{sqrt(2)/\d}) circle(\ds) node[anchor=south west] {$b^+$};
\fill ({-sqrt(2)/\d*sqrt(\d^2-1)},{sqrt(2)/\d}) circle(\ds) node[anchor=south east] {$b^-$};

\fill ({(1+sqrt(\d^2-1))/(sqrt(2)*\d)},{(1-sqrt(\d^2-1))/(sqrt(2)*\d)}) circle(\ds) node[anchor=east] {$p^+$};
\fill ({(1-sqrt(\d^2-1))/(sqrt(2)*\d)},{(1+sqrt(\d^2-1))/(sqrt(2)*\d)}) circle(\ds) node[anchor=north] {$p^-$};
\fill ({-(1-sqrt(\d^2-1))/(sqrt(2)*\d)},{(1+sqrt(\d^2-1))/(sqrt(2)*\d)}) circle(\ds) node[anchor=north] {$\theta(p^+)$};

\fill ({cos(\deg)*sqrt(2)/\d*sqrt(\d^2-1)-sin(\deg)*sqrt(2)/\d},{sin(\deg)*sqrt(2)/\d*sqrt(\d^2-1)+cos(\deg)*sqrt(2)/\d})
		circle(\ds) node[anchor=south west] {$w$};

\fill (0,0) circle(\ds) node[anchor=south west] {$0$};
\end{tikzpicture}
\caption{
An example of the situation in the proof of Lemma~\ref{lem:orth} for $d = \d$:
The vector $w$ is constructed like $w^k$ but using the angle $\deg^\circ$ instead of $\frac{1}{k}^\circ$. One of the $\pm u^i$ must lie on the solid red circular arc.
}
\label{fig:stab_lem_rot}
\end{figure}

Since $u^1$ lies on the minor circular arc connecting
$$ \theta(p^+) = \left( - \frac{1 - \sqrt{d^2 - 1}}{\sqrt{2} d}, \frac{1 + \sqrt{d^2 - 1}}{\sqrt{2} d} \right)
\quad \text{and} \quad
p^- = \left( \frac{1 - \sqrt{d^2 - 1}}{\sqrt{2} d}, \frac{1 + \sqrt{d^2 - 1}}{\sqrt{2} d} \right), $$
we immediately see
$$ | u^1_1 | \leq \frac{1 - \sqrt{d^2 - 1}}{\sqrt{2} d}
\quad \text{and} \quad
u^1_2 \geq \frac{1 + \sqrt{d^2 - 1}}{\sqrt{2} d}. $$
Now, assume for a contradiction that there exists some $x \in K$ with $|x_2| = |\langle x, e^2 \rangle| > \frac{\sqrt{2}}{d}$. By symmetry of $K$, we may assume
$$ x_2 > \frac{\sqrt{2}}{d}. $$
Since $K \subseteq \sqrt{2} \B^2$, it follows that
$$ |x_1| < \frac{\sqrt{2}}{d} \sqrt{d^2 - 1}. $$
However, we now obtain
$$ \langle x, u^1 \rangle \geq x_2 u^1_2 - |x_1 u^1_1| > \frac{1 + \sqrt{d^2 - 1}}{d^2} - \frac{1 - \sqrt{d^2 - 1}}{d^2} \sqrt{d^2 - 1} = 1, $$
contradicting $u^1 \in K^\circ$. Hence, every $x \in K$ satisfies $|\langle x, e^2 \rangle| \leq \frac{\sqrt{2}}{d}$ as claimed. The second inequality $|\langle x, e^1 \rangle| \leq \frac{\sqrt{2}}{d}$ follows from symmetry of the situation with respect to reflection at the line spanned by the vector $v$.
\end{proof}

To obtain the linear bound in the stability estimate, we require the following technical lemma.

\begin{lem}
\label{lem:stab_poly_ineq}
For all $r \in [0.95,1)$, the inequality
$$ \frac{\sqrt{2}}{r} \cdot \frac{x(r) + y(r) \sqrt{x(r)^2 + y(r)^2 - 1}}{x(r)^2 + y(r)^2} < 1 + 10 (1 - r) $$
is true, where the functions $x, y \colon [0.95,1) \to \R$ are defined as
$$ x(r) = \frac{\sqrt{2} (2 r^6 - 1)}{r (r^6 + r^2 - 1)}
\quad \text{and} \quad
y(r) = \frac{\sqrt{2} r (1 - r^4)}{r^6 + r^2 - 1}. $$
\end{lem}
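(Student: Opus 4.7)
At $r = 1$, direct substitution gives $x(1) = \sqrt{2}$ and $y(1) = 0$, so the left hand side equals $\frac{\sqrt{2}}{1} \cdot \frac{\sqrt{2}}{2} = 1$, matching $1 + 10 \cdot 0$. Hence the inequality is sharp at the right endpoint, and the content is to show that the left hand side grows more slowly than $1 + 10(1-r)$ as $r$ decreases from $1$ to $0.95$. A Taylor expansion in $s := 1 - r$ yields that the left hand side equals $1 + 8 s + O(s^2)$, exhibiting a robust first-order slack of $2(1-r)$ against $1 + 10 s$, so the inequality should hold comfortably on the short interval of length $0.05$.

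My plan is to make this rigorous by reducing to a polynomial inequality. Set $C(r) := x(r)^2 + y(r)^2$ and $k(r) := r(1 + 10(1-r)) = 11r - 10r^2$. The desired inequality is equivalent to
\[
\sqrt{2}\, y(r) \sqrt{C(r) - 1} < k(r) C(r) - \sqrt{2}\, x(r),
\]
and one checks directly from the formulas that $x(r), y(r), k(r), C(r) - 1$ are all positive on $[0.95, 1)$ (with $y(1) = 0$). I would first verify that the right hand side above is strictly positive on $[0.95, 1)$: after clearing the common denominator $r(r^6 + r^2 - 1)$, this is a polynomial inequality vanishing at $r = 1$, which can be confirmed by factoring out $(1-r)$ and inspecting the quotient. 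Once positivity is established, both sides are non-negative and squaring yields
\[
(k(r) C(r) - \sqrt{2}\, x(r))^2 - 2 y(r)^2 (C(r) - 1) > 0.
\]

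Clearing the common denominator $r^2 (r^6 + r^2 - 1)^2$ turns this into $Q(r) > 0$ for a polynomial $Q$ with $Q(1) = 0$. Since $y(r)$ itself has $(1-r)$ as a factor and the first-order slack is $2(1-r)$, one expects $(1-r)^2$ to divide $Q(r)$. Writing $Q(r) = (1-r)^m \tilde{Q}(r)$ with $\tilde{Q}(1) > 0$, the task reduces to verifying $\tilde{Q}(r) > 0$ throughout the compact interval $[0.95, 1]$. This can be done by computing the explicit positive rational value $\tilde{Q}(1)$ and combining it with an explicit upper bound on $|\tilde{Q}'|$ over $[0.95, 1]$ to rule out any sign change on this short interval.

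The only obstacle is algebraic bulk: $x$ and $y$ are rational functions of $r$ whose numerators and denominators have degree up to $7$, so $Q(r)$ is a polynomial of high degree (on the order of $14$) with no apparent structural simplification. There is however no conceptual difficulty, and the verification is purely mechanical; it can be carried out by direct expansion and elementary estimation by hand, or more efficiently by a computer algebra system.
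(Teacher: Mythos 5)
Your reduction follows the paper's proof almost step for step: verify that the non-radical side is positive so that squaring is legitimate, square, clear denominators, factor out $(1-r)^2$ (the paper confirms the exponent is exactly $2$), and certify positivity of the cofactor on $[0.95,1]$. Your preliminary observations are also correct: the left-hand side does equal $1$ at $r=1$ and has derivative $-8$ there, so the first-order slack against $1+10(1-r)$ is $2(1-r)$.

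The gap is in the one concrete certification method you commit to for the decisive step. Writing $Q(r)=(1-r)^2\tilde{Q}(r)$, you propose to prove $\tilde{Q}>0$ on $[0.95,1]$ from the single value $\tilde{Q}(1)$ together with a global bound on $|\tilde{Q}'|$ over the interval. This fails quantitatively. In the paper's normalization the cofactor factors as $\tilde{Q}=2fg$ with explicit degree-$12$ polynomials $f,g$ satisfying $f(1)=10$, $g(1)=1$, hence $\tilde{Q}(1)=20$, while $f'(1)=452$ and $g'(1)=24$ give $\tilde{Q}'(1)=1384$; so the best possible conclusion from your estimate is $\tilde{Q}(r)\geq 20-0.05\cdot 1384<0$. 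The failure is not an artifact of the starting point: near $r=0.95$ one has $f\approx 1$ and $g\approx 0.25$, so $\min_{[0.95,1]}\tilde{Q}\approx 0.5$ while $\tilde{Q}(1)=20$, which by the mean value theorem already forces $0.05\cdot\max|\tilde{Q}'|\geq 19.5$ — no single first-order estimate over the whole interval can beat the minimum value of roughly $0.5$. The inequality is genuinely tight there (the paper's remark that the constant $10$ can barely be improved to about $9.39$ reflects this), and your Taylor-expansion heuristic that it "should hold comfortably" is misleading: the $O(s^2)$ term eats most of the slack. To close the gap one must exploit more structure, as the paper does: split off the benign factor $g$ (monotone increasing with $g(0.95)>0.2$), show $f''>0$ on the interval so that $f'$ is increasing, deduce $f>0$ on $[0.96,1)$ from $f(0.96),f'(0.96)>0$, and only then apply a Lipschitz bound — on the short subinterval $[0.95,0.96]$, where $\max|f'|<36$ so that $0.01\cdot 36<1<f(0.96)$. (A minor additional slip: the common denominator after squaring is $r^4(r^6+r^2-1)^4$, not $r^2(r^6+r^2-1)^2$.)
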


Let us briefly observe that the left-hand side in the above inequality is indeed real valued since $x(r) > 1$ for the given range of $r$.

\begin{proof}
We can rewrite the left-hand side as
$$\frac{(2 r^6 - 1) (r^6 + r^2 - 1) + r (1 - r^4) \sqrt{2 \left( 2 r^6 - 1 \right)^2 + 2 r^4 (1 - r^4)^2 - r^2 (r^6 + r^2 - 1)}}{(2 r^6 - 1)^2 + r^4 (1 - r^4)^2}.$$
The claimed inequality is therefore equivalent to
\begin{align}
\begin{split}
\label{eq:poly_ineq_pre_square}
& r (1 - r^4) \sqrt{2 \left( 2 r^6 - 1 \right)^2 + 2 r^4 (1 - r^4)^2 - r^2 (r^6 + r^2 - 1)^2} \\
& < (1 + 10 (1 - r)) ((2 r^6 - 1)^2 + r^4 (1 - r^4)^2) - (2 r^6 - 1) (r^6 + r^2 - 1).
\end{split}
\end{align}
We want to square both sides to eliminate the square root, for which we first need to show that the above right-hand side is positive. To this end, it suffices to prove
$$ (1 + 10 (1 - r)) (2 r^6 - 1)^2 > (2 r^6 - 1) (r^6 + r^2 - 1), $$
which can by $2 r^6 - 1 > 0.4$ for $r \geq 0.95$ be rearranged to
$$ 10 (1 - r) (2 r^6 - 1) > r^2 (1 - r^4) = (1 - r) (r^5 + r^4 + r^3 + r^2). $$
Now, the left-hand side is larger than $4 (1-r)$, whereas the right-hand side is at most $4 (1-r)$. Altogether, \eqref{eq:poly_ineq_pre_square} is equivalent to the inequality with both sides squared, i.e., to the term
\begin{align*}
&\left( (1 + 10 (1 - r)) ((2 r^6 - 1)^2 + r^4 (1 - r^4)^2) - (2 r^6 - 1) (r^6 + r^2 - 1) \right)^2 \\
&- r^2 (1 - r^4)^2 \left( 2 (2 r^6 - 1)^2 + 2 r^4 (1 - r^4)^2 - r^2 (r^6 + r^2 - 1)^2 \right)
\end{align*}
being positive for all $r \in [0.95,1)$. It can be verified by a direct computation that the above expression can be factorized as
$$ 2 (r-1)^2 f(r) g(r),$$
where
$$ f(r) = 250 r^{12} - 30 r^{11} - 29 r^{10} - 28 r^9 - 128 r^8 + 12 r^7 - 190 r^6 + 18 r^5 + 68 r^4 + 8 r^3 + 9 r^2 + 50 $$
and
$$ g(r) = 5 r^{12} - 2 r^8 - 4 r^6 + r^4 + 1$$
It is thus enough to prove that both polynomials $f$ and $g$ are positive on the interval $[0.95, 1]$. We shall rely on the following general observation: if $u \colon \R \to \R$ is a differentiable function such that for some $a \in \R$ we have $u'(r) \geq 0$ for every $r \geq a$ and also $u(a) > 0$, then $u(r) > 0$ for every $r \geq a$.

Let us start with the polynomial $g$. We calculate that
$$g'(r)= 4 r^3 (15 r^8 - 4 r^4 - 6 r^2 + 1), $$
which is positive for $r \in [0.95,1)$ by $15 \cdot 0.95^8 - 4 - 6 + 1 > 0.9$. Therefore, $g$ is increasing on this interval with $g(0.95) > 0.2$, which shows that it is positive for all $r \in [0.95,1)$, as claimed.

Now, we shall prove that $f$ is positive for $r \in [0.95, 1)$. This case requires a more delicate analysis, as $f$ is not monotonic on this interval. The first derivative of $f$ is
$$ 2 r (1500 r^{10} - 165 r^9 - 145 r^8 - 126 r^7 - 512 r^6 + 42 r^5 - 570 r^4 + 45 r^3 + 136 r^2 + 12 r + 9) $$
and the second derivative can be computed to be
$$ 2 (16500 r^{10} - 1650 r^9 - 1305 r^8 - 1008 r^7 - 3584 r^6 + 252 r^5 - 2850 r^4 + 180 r^3 + 408 r^2 + 24 r + 9). $$
Lower estimating $r$ by $0.95$ and upper estimating $r$ by $1$ in all terms with positive resp.~negative coefficient shows that the second derivative is for $r \in [0.95,1)$ always larger than $400$, i.e., positive. Therefore, the first derivative is increasing on this interval. Since $f(0.96)$ and $f'(0.96)$ are both positive, we obtain that $f$ is positive on $[0.96,1)$.

We are left with with the case of $r \in [0.95, 0.96]$. We calculate that $f(0.96)>1$ and, because $f'$ is increasing on this interval, we have
$$ M := \max\{|f'(r)|: \ r \in [0.95, 0.96]\} = \max\{|f'(0.95)|, |f'(0.96)|\} < 36 $$
again by a direct evaluation. Therefore, the mean value theorem shows
$$ |f(0.96) - f(r)| \leq |r-0.96| \cdot M \leq \frac{M}{100} < 1. $$
As $f(0.96)>1$, this yields $f(r) > 0$ and the proof is finished.
\end{proof}

We are now ready to prove the stability estimate.

\begin{proof}[Proof of Theorem~\ref{thm:stability}]
We may assume
$$ r := \frac{d_{BM}(K,\B^2)}{\sqrt{2}} < 1 $$
by Theorem~\ref{thm:eqcase}, and additionally $\varepsilon \leq \frac{\sqrt{2}}{20}$, i.e., $r \geq 0.95$, as otherwise a result of Stromquist \cite{stromquist} shows that
$$ d_{BM}(K,\CP^2) \leq \frac{3}{2} < 1 + \frac{10}{\sqrt{2}} \varepsilon. $$

By applying a suitable affine transformation, we can further assume that $K$ is origin symmetric with John ellipse $\B^2$. In this case, $K$ cannot be contained in the interior of $r \sqrt{2} \B^2$, so by convexity there exists a vector $v \in K$  with $\|v\| = r \sqrt{2}$. For this vector $v$, we take an orthonormal basis $e^1, e^2 \in \B^2$ as in Lemma~\ref{lem:orth}. By applying an appropriate rotation, we may assume $e^1 = \frac{1}{\sqrt{2}} (1, 1)$ and $e^2 = \frac{1}{\sqrt{2}} (-1, 1)$. Then the point $v = (0, r \sqrt{2})$ lies in $K$. From Lemma~\ref{lem:orth} it follows that 
\begin{equation}
\label{eq:stab_inclusion1}
K \subseteq \frac{\sqrt{2}}{r} \CC^2.
\end{equation}

Next, we claim that there exists some $w \in K$ with
\begin{equation}
\label{eq:x(r)}
w_1 \geq \frac{\sqrt{2} (2 r^6 - 1)}{r (r^6 + r^2 - 1)} =: x(r).
\end{equation}
We remark that it is easy to verify that $x(r) \in \left( 1,r \sqrt{2} \right)$ for $r \in [0.95,1)$. Now, let us assume that the above condition does not hold for any $w \in K$. In this case, the inclusion \eqref{eq:stab_inclusion1} and the origin symmetry of $K$ show that $K$ is a subset of the hexagon $H$ with vertex set
$$ \frac{\sqrt{2}}{r} \left\{ ( 0, \pm 1 ), \left( \pm \frac{2 r^6 - 1}{r^6 + r^2 - 1}, \pm \frac{r^2 - r^6}{r^6 + r^2 - 1} \right) \right\} $$
(cf.~Figure~\ref{fig:stab_T(H)}). By construction, all vertices of $H$ lie on the sides of $\frac{\sqrt{2}}{r} \CC^2$, but none of them are contained in $K$ by $K \subseteq \sqrt{2} \B^2 \subseteq \inte( \frac{\sqrt{2}}{r} \B^2)$ and the assumption that \eqref{eq:x(r)} does not hold for any point in $K$.

\begin{figure}[ht]
\def\d{1.35}
\def\r{(\d/sqrt(2))}
\def\xr{(sqrt(2)*(2*\r^6-1)/(\r*(\r^6+\r^2-1)))}
\def\a{(\r^2*sqrt((2*\r^2-1)/(2*\r^6-1)))}
\def\b{(\d^2/2)}
\def\sq{(sqrt(2))}
\def\scale{2.0}
\centering
\begin{tikzpicture}[scale=\scale]
\filldraw[draw=yellow,fill=yellow!50] (0,{2/\d}) -- ({-\xr},{2/\d-\xr}) -- ({-\xr},{\xr-2/\d}) -- (0,{-2/\d}) -- ({\xr},{\xr-2/\d}) -- ({\xr},{2/\d-\xr}) -- cycle;

\draw[dashed] (0,{\d}) -- ({-\a*\xr},{\b*(2/\d-\xr)}) -- ({-\a*\xr},{\b*(\xr-2/\d)}) -- (0,{-\d}) -- ({\a*\xr},{\b*(\xr-2/\d)}) -- ({\a*\xr},{\b*(2/\d-\xr)}) -- cycle;

\draw (0,0) circle(1);
\draw (0,0) circle(\sq);
\draw[dotted] (0,0) circle(\d);

\draw[red] ({2/\d},0) -- (0,{2/\d}) -- ({-2/\d},0) -- (0,{-2/\d}) -- cycle;

\fill (0,{2/\d}) circle(\ds) node[anchor=south] {$\left( 0,\frac{\sqrt{2}}{r} \right)$};
\fill ({1/\sq},{1/\sq}) circle(\ds) node[anchor=north east] {$e^1$};
\fill ({-1/\sq},{1/\sq}) circle(\ds) node[anchor=north west] {$e^2$};
\fill (0,{\d}) circle(\ds) node[anchor=north] {$v$};
\fill (0,0) circle(\ds) node[anchor=south] {$0$};
\end{tikzpicture}
\caption{
An example of the situation in the proof of Theorem~\ref{thm:stability} for $d_{BM}(K,\B^2) \geq \d$: $\frac{\sqrt{2}}{r} \CC^2$ (red), $H$ (yellow), $T(H)$ (dashed), $\B^2$ and $\sqrt{2} \B^2$ (black), $\d \B^2$ (dotted). Since $K \subseteq H$ and none of the vertices of $H$ belong to $K$, we have $T(K) \subseteq \inte(\d \B^2)$.
}
\label{fig:stab_T(H)}
\end{figure}

Our goal is to obtain a contradiction to the fact that no ellipse yields a better bound on the Banach-Mazur distance to $K$ than $r \sqrt{2}$. To this end, we define parameters
$$ \alpha := r^2 \sqrt{\frac{2 r^2 - 1}{2 r^6 - 1}}
\quad \text{and} \quad
\beta := r^2 $$
and a linear transformation $T \colon \R^2 \to \R^2$ by
$$ T(x,y) = (\alpha x, \beta y). $$
Then clearly $T \left( 0,\frac{\sqrt{2}}{r} \right) = \left( 0,r \sqrt{2} \right) \in \bd(r \sqrt{2} \B^2)$. Moreover,
\begin{align*}
\left\| \frac{\sqrt{2}}{r} T \left( \pm \frac{2 r^6 - 1}{r^6 + r^2 - 1}, \pm \frac{r^2 - r^6}{r^6 + r^2 - 1} \right) \right\|^2
& = \frac{2}{r^2} \left( \alpha^2 \frac{(2 r^6 - 1)^2}{(r^6 + r^2 - 1)^2} + \beta^2 \frac{(r^2 - r^6)^2}{(r^6 + r^2 - 1)^2} \right) \\
& = 2 r^2 \frac{(2 r^2 - 1) (2 r^6 - 1) + (r^2 - r^6)^2}{(r^6 + r^2 - 1)^2}
= 2 r^2,
\end{align*}
so $T(H) \subseteq r \sqrt{2} \B^2$. Since none of the vertices of $T(H)$ lie in $T(K)$ and $r \sqrt{2} \B^2$ is strictly convex, it follows that $T(K) \subseteq \inte(r \sqrt{2} \B^2)$.

Moreover, we claim that $\B^2 \subseteq T(K)$ (cf.~Figure~\ref{fig:stab_inner_circle}). To this end, we first prove for
$$ u := \left( \sqrt{1 - \frac{1}{2 r^2}}, \frac{1}{r \sqrt{2}} \right) \in \bd(\B^2) $$
and $S := [v,u] \subseteq K$ that $T(S)$ does not meet $\inte(\B^2)$. The vector
$$ a := \left( \sqrt{1 - \frac{1}{2 \beta^2 r^2}}, \frac{1}{\beta r \sqrt{2}} \right) \in \bd(\B^2) $$
is well-defined by $\beta r \sqrt{2} \geq 0.95^3 \sqrt{2} > 1$, while satisfying $\langle a, T(v) \rangle = \langle a, (0,\beta r \sqrt{2}) \rangle = 1$ and
$$ \left\langle a, T(u) \right\rangle = \alpha \sqrt{\frac{(2 \beta^2 r^2 - 1) (2 r^2 - 1)}{4 \beta^2 r^4}} + \frac{1}{2 r^2} = \frac{2 r^2 - 1}{2 r^2} + \frac{1}{2 r^2} = 1. $$
Hence, the segment $T(S)$ belongs to the line tangential to $\B^2$ at $a$, so $T(S)$ indeed does not meet $\inte(\B^2)$. The part of $\B^2$ above $x_2 = \frac{r}{\sqrt{2}}$ is thus contained in $T(\conv(S \cup S')) \subseteq T(K)$, where $S'$ is the reflection of $S$ at the $y$-axis. Next, we observe that $\alpha > \beta$, so the function $[0,\frac{1}{r \sqrt{2}}] \ni y \mapsto \| T(\sqrt{1-y^2},y) \|$ is decreasing with minimum $\| T(u) \|$ attained for $y = \frac{1}{r \sqrt{2}}$. By the above, we have $\| T(u) \| \geq 1$. Therefore, a slice of $\B^2$ at $0 \leq x_2 \leq \frac{r}{\sqrt{2}}$ is contained in the image of the slice at $\frac{x_2}{r^2}$ under $T$. Altogether, we conclude from the origin symmetry of $T(K)$ that $\B^2 \subseteq T(K)$, contradicting the assumption $r \sqrt{2} = d_{BM}(K,\B^2)$ as desired.

\begin{figure}[ht]
\def\d{1.35}
\def\r{(\d/sqrt(2))}
\def\xr{(sqrt(2)*(2*\r^6-1)/(\r*(\r^6+\r^2-1)))}
\def\a{(\r^2*sqrt((2*\r^2-1)/(2*\r^6-1)))}
\def\b{(\d^2/2)}
\def\sq{(sqrt(2))}
\def\scale{3}
\centering
\begin{tikzpicture}[scale=\scale]
\draw[dashdotted] ({-\sq},{1/\d}) -- ({\sq},{1/\d});
\draw[dotted] ({-\sq},{\d/2}) -- ({\sq},{\d/2});

\draw (1,0) arc (0:180:1);
\draw ({\sq},0) arc (0:180:{\sq});

\draw[blue] (0,{\d}) -- ({sqrt(1-1/\d^2)},{1/\d});
\draw[blue] (0,{\d}) -- ({-sqrt(1-1/\d^2)},{1/\d});

\draw[dashed] (0,{\d^3/2}) -- ({\a*sqrt(1-1/\d^2)},{\d/2});
\draw[dashed] (0,{\d^3/2}) -- ({-\a*sqrt(1-1/\d^2)},{\d/2});

\draw[dashed] ({\a},0) arc [start angle=0, end angle={atan(1/sqrt(\d^2-1))}, x radius={\a}, y radius={\b}];
\draw[dashed] ({-\a*sqrt(1-1/\d^2)},{\d/2}) arc [start angle={180-atan(1/sqrt(\d^2-1))}, end angle=180, x radius={\a}, y radius={\b}];

\fill ({sqrt(1-1/\d^2)},{1/\d}) circle(\ds);
\node[anchor=west] at ({sqrt(1-1/\d^2)-0.02},{1/\d+0.06}) {$u$};
\fill ({-sqrt(1-1/\d^2)},{1/\d}) circle(\ds);

\fill ({\a*sqrt(1-1/\d^2)},{\d/2}) circle(\ds);
\node[anchor=east] at ({\a*sqrt(1-1/\d^2)+0.05},{\d/2-0.1}) {$T(u)$};
\fill ({-\a*sqrt(1-1/\d^2)},{\d/2}) circle(\ds);

\fill ({sqrt(1-1/(2*\r^6))},{1/(\r^3*sqrt(2))}) circle(\ds);
\node[anchor=east] at ({sqrt(1-1/(2*\r^6))-0.025},{1/(\r^3*sqrt(2))-0.01}) {$a$};
\fill ({-sqrt(1-1/(2*\r^6))},{1/(\r^3*sqrt(2))}) circle(\ds);

\fill (0,{\d}) circle(\ds);
\node[anchor=south] at (0,{\d+0.05}) {$v$};
\fill (0,{\d^3/2}) circle(\ds);
\node[anchor=north] at (0,{\d^3/2-0.04}) {$T(v)$};
\fill (0,0) circle(\ds) node[anchor=south] {$0$};
\end{tikzpicture}
\caption{
An example of the situation in the proof of Theorem~\ref{thm:stability} for $d_{BM}(K,\B^2) \geq \d$: $\B^2$ and $\sqrt{2} \B^2$ (black), lines at $x_2 = \frac{1}{r \sqrt{2}}$ (dash-dotted) and $x_2 = \frac{r}{\sqrt{2}}$ (dotted), $S$ and its reflection $S'$ at the $y$-axis (blue), $T(\conv(\{v\} \cup \B^2))$ (dashed). The parameters $\alpha$ and $\beta$ are chosen so that $\frac{\sqrt{2}}{r} \beta = r \sqrt{2}$ and $\alpha > 0$ is smallest possible with $\B^2 \subseteq T(\conv(\{v,-v\} \cup \B^2))$.
}
\label{fig:stab_inner_circle}
\end{figure}

Therefore, there must exist a point $w \in K$ satisfying \eqref{eq:x(r)}. By reflecting everything at the $x$-axis if necessary, we may assume $w_2 \geq 0$. The inclusion \eqref{eq:stab_inclusion1} yields now
\begin{equation}
\label{eq:y(r)}
w_2 \leq \frac{\sqrt{2}}{r} - w_1 \leq \frac{\sqrt{2}}{r} - x(r) = \frac{\sqrt{2} r (1 - r^4)}{r^6 + r^2 - 1} =: y(r).
\end{equation}
For $r \in [0.95, 1)$ we clearly have $y(r)<1$. Let us now define a point
$$ p := \frac{1}{w_1^2 + w_2^2} \left( w_1 + w_2 \sqrt{w_1^2+w_2^2-1}, w_2 - w_1 \sqrt{w_1^2+w_2^2-1} \right), $$
which satisfies $\langle p, p \rangle = \langle p, w \rangle = 1$. Thus, $p \in \B^2 \subseteq K$ is a tangency point of a tangent from $w$ to $\B^2$. Since $w_1 \geq x(r) > 1$ and thus $w_2 = \sqrt{w_2^2} < w_1 \sqrt{w_1^2 + w_2^2 - 1}$, we have
$$ \lambda := \frac{w_2 + w_2 (w_1^2 + w_2^2 - 1)}{w_1 \sqrt{w_1^2 + w_2^2 - 1} + w_2 (w_1^2 + w_2^2 - 1)} \in [0,1] $$
and therefore
$$ q := \left( \frac{w_1^2 + w_2^2}{w_1 + w_2 \sqrt{w_1^2 + w_2^2 - 1}}, 0 \right) = \lambda p + (1-\lambda) w \in \conv(\B^2 \cup \{w\}) \subseteq K.$$
As the point $q$ is the intersection point of the $x$-axis and the tangent of $w$ to $\B^2$ that meets the circle in a point with negative second coordinate, it is easy to see that $q_1$ decreases as $w_1$ decreases (staying larger than 1) or $w_2$ increases (cf.~Figure~\ref{fig:stab_q}): If $w$ is replaced by a point $w'$ with $1 < w'_1 \leq w_1$ and $w'_2 \geq w_2$, then clearly $\langle p, w' \rangle \leq \langle p, w \rangle = 1 < \langle (1,0), w' \rangle$. Thus, the corresponding tangent for $w'$ meets $\bd(\B^2)$ in a point on the minor circular arc between $p$ and $(1,0)$ and, consequently, the $x$-axis in a point $q'$ with $q'_1 \leq q_1$.

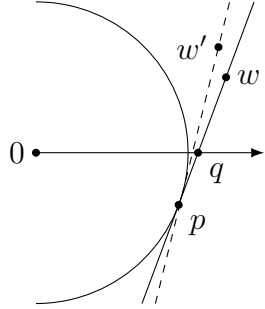
\begin{figure}[ht]
\def\wx{1.25}
\def\wy{0.5}
\def\wdx{1.125}
\def\wdy{0.7}
\def\px{((\wx+\wy*sqrt(\wx^2+\wy^2-1))/(\wx^2+\wy^2))}
\def\py{((\wy-\wx*sqrt(\wx^2+\wy^2-1))/(\wx^2+\wy^2))}
\def\pdx{((\wdx+\wdy*sqrt(\wdx^2+\wdy^2-1))/(\wdx^2+\wdy^2))}
\def\pdy{((\wdy-\wdx*sqrt(\wdx^2+\wdy^2-1))/(\wdx^2+\wdy^2))}
\def\scale{1.85}
\centering
\begin{tikzpicture}[scale=\scale]
\draw[->] (0,0) -- (1.5,0);

\draw (0,-1) arc (-90:90:1);

\draw ({-\px/(\py-1)},-1) -- ({\px/(\py+1)},1);

\draw[dashed] ({-\pdx/(\pdy-1)},-1) -- ({\pdx/(\pdy+1)},1);

\fill (\wx,\wy) circle(\ds) node[anchor=west] {$w$};

\fill (\wdx,\wdy) circle(\ds) node[anchor=east] {$w'$};

\fill ({\px},{\py}) circle(\ds) node[anchor=north west] {$p$};

\fill ({1/\px},0) circle(\ds) node[anchor=north west] {$q$};

\fill (0,0) circle(\ds) node[anchor=east] {$0$};
\end{tikzpicture}
\caption{
An example for the monotonicity of $q_1$ in the proof of Theorem~\ref{thm:stability}:
For $w_1 > 1$ and $w_2 \geq 0$, there is a tangent from $w$ to $\B^2$ that meets $\bd(\B^2)$ in the point $p$ lying in the fourth quadrant. The corresponding tangent for $w'$ with $1 < w'_1 \leq w_1$ and $w'_2 \geq w_2$ meets the $x$-axis in a point to the left of $q$.
}
\label{fig:stab_q}
\end{figure}

In summary, we obtain in any case from \eqref{eq:x(r)} and \eqref{eq:y(r)} that
$$ \left( \frac{x(r)^2 + y(r)^2}{x(r) + y(r) \sqrt{x(r)^2 + y(r)^2 - 1}}, 0 \right) \in K.$$
Since this point is the convex combination of some point in $\B^2$ and $(x(r),y(r))$, where $x(r) > 1$, it follows
$$ s(r) := \frac{x(r)^2 + y(r)^2}{x(r) + y(r) \sqrt{x(r)^2 + y(r)^2 - 1}} \leq x(r) < r \sqrt{2}. $$
Since $v = (0,r \sqrt{2}) \in K$, we conclude that $s(r) \CC^2 \subseteq K$. Combining this with the inclusion \eqref{eq:stab_inclusion1}, we get
$$ d_{BM}(K,\CP^2) \leq \frac{\sqrt{2}}{r} \cdot \frac{1}{s(r)} = \frac{\sqrt{2}}{r} \cdot \frac{x(r) + y(r) \sqrt{ x(r)^2 + y(r)^2 - 1 }}{x(r)^2 + y(r)^2}. $$
The desired estimate now follows directly from Lemma~\ref{lem:stab_poly_ineq}.
\end{proof}

Theorem~\ref{thm:stability} states that if a planar symmetric convex body is far from the Euclidean disc in the Banach-Mazur distance, then it is close to the parallelogram. Consequently, at least one these two distances is not too large for any convex body. We formalize this observation in the corollary below.

\begin{cor}
\label{cor:BM_cover}
For any symmetric convex body $K \subseteq \R^2$ we have $d_{BM}(K, \B^2) < d$ or $d_{BM}(K, \CP^2) < d$, where $d := \frac{11 \sqrt{2}}{10 + \sqrt{2}} < 1.363$.
\end{cor}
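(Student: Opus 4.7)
The strategy is an immediate dichotomy followed by application of Theorem~\ref{thm:stability}. If $d_{BM}(K,\B^2) < d$, there is nothing to prove. Otherwise, I invoke Theorem~\ref{thm:stability} with a carefully chosen $\varepsilon > 0$ so that the resulting upper bound on $d_{BM}(K,\CP^2)$ lands exactly at $d$. The value of $d$ given in the statement is precisely the one that makes this balance work.

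Concretely, suppose $d_{BM}(K,\B^2) \geq d$. Since $d < \sqrt{2}$, I set $\varepsilon := \sqrt{2} - d > 0$, so that the hypothesis $d_{BM}(K,\B^2) \geq \sqrt{2} - \varepsilon$ of Theorem~\ref{thm:stability} is satisfied. The theorem then yields
$$ d_{BM}(K,\CP^2) < 1 + 5\sqrt{2}\,\varepsilon = 1 + 5\sqrt{2}(\sqrt{2} - d) = 11 - 5\sqrt{2}\,d. $$
To close the argument, I need $11 - 5\sqrt{2}\,d \leq d$, equivalently $d \geq \frac{11}{1 + 5\sqrt{2}}$. A short rationalization shows
$$ \frac{11}{1 + 5\sqrt{2}} = \frac{11(5\sqrt{2} - 1)}{49} = \frac{11\sqrt{2}(10 - \sqrt{2})}{98} = \frac{11\sqrt{2}}{10 + \sqrt{2}} = d, $$
so the inequality holds with equality. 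Combined with the strict inequality from Theorem~\ref{thm:stability}, this gives $d_{BM}(K,\CP^2) < d$, as desired. The numerical bound $d < 1.363$ follows from $\sqrt{2} < 1.4143$ (so that $11\sqrt{2} < 15.557$ and $10 + \sqrt{2} > 11.414$, giving $d < 1.363$).

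There is no real obstacle here: the proof is a two-line reduction after choosing the correct $\varepsilon$, plus one algebraic identity verifying that the given $d$ is the fixed point of the affine map $d \mapsto 1 + 5\sqrt{2}(\sqrt{2} - d)$. The only subtle point is that the choice $\varepsilon = \sqrt{2} - d$ is uniform in $K$ (one does not take $\varepsilon$ depending on $d_{BM}(K,\B^2)$), which is what allows Theorem~\ref{thm:stability} to yield a single covering radius.
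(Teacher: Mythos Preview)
Your proof is correct and follows exactly the same approach as the paper: assume $d_{BM}(K,\B^2) \geq d$, apply Theorem~\ref{thm:stability} with $\varepsilon = \sqrt{2} - d$, and verify that $1 + 5\sqrt{2}(\sqrt{2}-d) = d$. The paper's version is terser (it simply writes $d_{BM}(K,\CP^2) < 1 + \frac{10}{\sqrt{2}}(\sqrt{2}-d) = d$ without the intermediate rationalization), but the content is identical.
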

\noindent\emph{Proof.}
Let $K \subseteq \R^2$ be a symmetric convex body and suppose that $d_{BM}(K,\B^2) \geq d$. Then Theorem~\ref{thm:stability} yields 
$$ d_{BM}(K,\CP^2) < 1 + \frac{10}{\sqrt{2}} \left( \sqrt{2} - d \right) = d.\eqno\qed $$

A straightforward estimate in the other direction comes from the unit ball of the $\ell_4^2$ space, for which it is well-known that it is at distance $\sqrt[4]{2} \approx 1.189$ to both $\B^2$ and $\CP^2$ (see for example \cite[Proposition~$37.6$]{tomczak}). As there is a quite large gap between $1.189$ and $1.363$, it is not clear how close the above estimate is to the optimal one. It is interesting to note that the problem about covering the symmetric Banach-Mazur compactum with balls centered at $\B^2$ and $\CP^2$ has been proposed during the open problem session of the workshop ``Interplay between Geometric Analysis and Discrete Geometry'' that was held in $2023$ in Mexico (see \cite{workshopreport} for the report available online). In general, it is natural to study the possibility of covering the Banach-Mazur compactum with some $k \geq 1$ balls, with centers either in some specific convex bodies or completely arbitrary ones. For large $k$ and arbitrary centers, this question was studied by Bronstein in \cite{bronstein}. However, for small $k$, apart from the well-studied case of $k=1$ and our planar result for $k=2$, it seems that no other results in this direction are known. It should be noted that Stromquist~\cite{stromquist} constructed a planar symmetric convex body that is of distance at most $\sqrt{1.5} \approx 1.225$ to any other symmetric convex body.

\begin{remark}
A closer analysis of the left-hand side in Lemma~\ref{lem:stab_poly_ineq} suggests a likely improvement of the results obtained in this section, albeit at the cost of additional technicalities. Numerical data suggests that Lemma~\ref{lem:stab_poly_ineq} remains valid if the right-hand side is replaced by $1 + 6.64 \sqrt{2} (1 - r)$, where $6.64 \sqrt{2} \approx 9.39$. Consequently, this would lead to an improvement of the constant $c$ in Theorem~\ref{thm:stability} to $6.64$ and the upper bound on at least one of the Banach-Mazur distances in Corollary~\ref{cor:BM_cover} to $1.36$. However, such constants would probably still be not the best possible (we recall that for a linear constant in the stability estimate, an obvious lower bound is $\frac{1}{\sqrt{2}}$).
\end{remark}

Taking into account the results of this and the previous section, it is natural to ask about the stability of the parallelotope and the cross-polytope as the only three-dimensional symmetric convex bodies with the maximal distance to $\B^3$. However, this situation is more complicated to handle since there is not just a single maximizer. For example, this makes it hard to obtain an analog of Lemma~\ref{lem:orth} for dimension $3$. Instead, a possible way forward might be to use Ader decompositions. The proof of Corollary~\ref{cor:estimate} already shows for a symmetric convex body $K \subseteq \R^3$ with $\B^3 \subseteq K \subseteq d_{BM}(K,\B^3) \B^3$ that the matrix underlying the Ader decomposition for $K$ must be ``close'' to being a multiple of the identity matrix.

\section{Maximal Distance Between Planar \texorpdfstring{$1$}{1}-Symmetric Convex Bodies}
\label{sec:1sym}

In this section, we determine the maximal distance between planar $1$-symmetric convex bodies and characterize the equality case. The main idea to obtain the estimate is to use the $1$-symmetry assumption to place the convex bodies involved in a similar position. This is also already enough to show that the equality case requires one of the convex bodies to be a square. However, to obtain the full characterization of the equality case, we have to consider other linear transformations as well. We begin with the following simple, but somewhat technical, lemma.

\begin{lem}
\label{lem:stab_ineq}
Let $\frac{1}{2} \leq x \leq \frac{1}{\sqrt{2}}$ and $\frac{1}{\sqrt{2}} \leq y \leq 1$ be reals with $x \leq \frac{y(\sqrt{2}-1)}{1-y(2-\sqrt{2})}$. Then
$$\frac{\sqrt{2}y(2x-1)(1-x)}{x + y - 2xy} \leq \sqrt{2} x + y - \sqrt{2},$$
with equality if and only if $(x,y) = \left( \frac{1}{2}, \frac{1}{\sqrt{2}} \right)$ or $(x,y) = \left( \frac{1}{\sqrt{2}},1 \right)$.
\end{lem}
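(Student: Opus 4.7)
The plan is to cross-multiply, reduce the claim to the non-negativity of a single polynomial $g(x,y)$, and then analyze $g$ as a concave function of $y$ in order to reduce the assertion to estimates at the two endpoints of the admissible range of $y$. To begin, I would check that $x + y - 2xy > 0$ on the whole domain: writing it as $x + y(1 - 2x)$, positivity is immediate when $1 - 2x \geq 0$, while if $1 - 2x < 0$, then $y \leq 1$ gives $x + y(1 - 2x) \geq 1 - x > 0$. Multiplying both sides of the claimed inequality by this positive quantity and expanding reduces the assertion to
$$ g(x,y) := \sqrt{2}\, x^2 - \sqrt{2}\, x + xy + y^2 - 2xy^2 \geq 0. $$

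The next step is to observe that $g(x, y) = (1 - 2x) y^2 + x y + \sqrt{2}\, x (x - 1)$ is, for fixed $x \in [1/2, 1/\sqrt{2}]$, a concave function of $y$, since the leading coefficient $1 - 2x$ is non-positive. I would then rewrite the constraint $x \leq y(\sqrt{2} - 1)/(1 - y(2 - \sqrt{2}))$ by multiplying through by the positive quantity $1 - y(2 - \sqrt{2})$, obtaining $y \geq y_0(x) := x/(a + b x)$, where $a := \sqrt{2} - 1$ and $b := 2 - \sqrt{2}$. Because a concave function on a closed interval takes its minimum at an endpoint, it suffices to verify $g(x, 1) \geq 0$ and $g(x, y_0(x)) \geq 0$ for every $x \in [1/2, 1/\sqrt{2}]$. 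The first is immediate from the factorization $g(x, 1) = (1 - x)(1 - \sqrt{2}\, x)$, which is non-negative on the interval with equality iff $x = 1/\sqrt{2}$.

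The remaining endpoint $y = y_0(x)$ is the main obstacle. Substituting $y = x/(a + b x)$ into $g$ and using the identities $a + 1 = \sqrt{2}$ and $b - 2 = -\sqrt{2}$ (so that $a + b x + 1 - 2 x = \sqrt{2}(1 - x)$) yields, after clearing denominators, the factorization
$$ (a + b x)^2 \, g(x, y_0(x)) = \sqrt{2}\, x (1 - x) \bigl( x - (a + b x)^2 \bigr). $$
It then remains to check that $h(x) := x - (a + b x)^2$ is non-negative on $[1/2, 1/\sqrt{2}]$. I would note that $h$ is concave, since $h''(x) = -2 b^2 < 0$, and that $h(1/2) = 0$ (directly from $a + b/2 = 1/\sqrt{2}$), $h'(1/2) = 1 - 2 b/\sqrt{2} = 3 - 2\sqrt{2} > 0$, and the unique critical point $h'(x) = 0$ is at $x = (1 - 2ab)/(2 b^2) = 3/4 > 1/\sqrt{2}$. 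Hence $h$ is strictly increasing on $[1/2, 1/\sqrt{2}]$, giving $h \geq 0$ with equality only at $x = 1/2$.

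Combining the two endpoint estimates, concavity of $g(x, \cdot)$ in $y$ yields $g(x, y) \geq 0$ throughout the admissible region. For the equality case, strict concavity of $g(x, \cdot)$ when $x > 1/2$ (and direct linear analysis when $x = 1/2$) forces any equality to occur at one of the two endpoints $y = y_0(x)$ or $y = 1$; the endpoint analyses pin these down to the two claimed points $(1/2, 1/\sqrt{2})$ and $(1/\sqrt{2}, 1)$. I expect the cleanest factorization in the third paragraph, which relies crucially on the identity $a + 1 + (b - 2)x = \sqrt{2}(1 - x)$, to be the main technical hurdle; once it is in hand, the remainder follows from elementary estimates.
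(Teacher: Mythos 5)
Your proof is correct. It reduces the claim, exactly as the paper does, to the non-negativity of the same quadratic polynomial $\sqrt{2}x^2 + (-2y^2+y-\sqrt{2})x + y^2$, but from there the two arguments diverge. The paper reads this as a convex quadratic in $x$, so it must first show that the whole feasible $x$-range lies on the decreasing side of the parabola, i.e., to the left of the vertex $x^*(y)=\frac{2y^2-y+\sqrt{2}}{2\sqrt{2}}$ --- a step that costs a sign analysis of a cubic in $y$ including a numerical evaluation at $y=0.85$ --- and then checks the two candidate right endpoints $x=\tfrac{1}{\sqrt{2}}$ and $x=\frac{y(\sqrt{2}-1)}{1-y(2-\sqrt{2})}$, where the expression factorizes into visibly non-negative products such as $2(\sqrt{2}-1)y(y-1)^2\left(y-\tfrac{1}{\sqrt{2}}\right)$. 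You instead read the polynomial as a concave quadratic in $y$ (leading coefficient $1-2x\le 0$), so the reduction to the endpoints $y=y_0(x)$ and $y=1$ is immediate and no vertex-location argument or numerical verification is needed; the price is that the boundary value $g(x,y_0(x))$ requires the auxiliary function $h(x)=x-(a+bx)^2$, whose positivity you settle by concavity and the location of its critical point at $x=\tfrac34$. I checked the key computations: the identity $1-2x+a+bx=\sqrt{2}(1-x)$ and the resulting factorization $(a+bx)^2\,g(x,y_0(x))=\sqrt{2}\,x(1-x)\bigl(x-(a+bx)^2\bigr)$, the factorization $g(x,1)=(1-x)(1-\sqrt{2}x)$, $h(1/2)=0$, $h'(1/2)=3-2\sqrt{2}>0$, and $\frac{1-2ab}{2b^2}=\frac{3(3-2\sqrt{2})}{4(3-2\sqrt{2})}=\frac34$; also $y_0$ is increasing with $y_0(1/2)=\tfrac{1}{\sqrt{2}}$, so $[y_0(x),1]$ indeed contains the feasible $y$-range. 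The equality discussion is complete as well: strict concavity in $y$ for $x>\tfrac12$ (and linearity for $x=\tfrac12$) pushes equality to the endpoints, where your factorizations isolate exactly $\left(\tfrac12,\tfrac{1}{\sqrt{2}}\right)$ and $\left(\tfrac{1}{\sqrt{2}},1\right)$, both admissible. Your variant is fully exact, avoiding the paper's numerical checks, at the cost of a slightly less transparent endpoint factorization.
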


\begin{proof}
We have
$$ x + y -2xy = (x-y)^2 + x (1-x) + y (1-y) > 0. $$
Thus, the desired inequality can be equivalently rewritten as
$$\sqrt{2}y(2x-1)(1-x) \leq (\sqrt{2} x + y - \sqrt{2})(x+y-2xy)$$
or
\begin{equation}
\label{eq:ineq_xy}
\sqrt{2}x^2 + (-2y^2 + y - \sqrt{2})x + y^2 \geq 0.
\end{equation}
The left-hand side is for fixed $y \in \R$ a quadratic function in $x$, which attains its global minimum at $x^*(y) = \frac{2y^2-y+\sqrt{2}}{2\sqrt{2}}$. We claim now that
\begin{equation}
\label{eq:ineq_xy2}
\min \left\{ \frac{1}{\sqrt{2}}, \frac{y(\sqrt{2}-1)}{1-y(2-\sqrt{2})} \right\} \leq \frac{2y^2-y+\sqrt{2}}{2\sqrt{2}}.
\end{equation}
The inequality 
$$ \frac{1}{\sqrt{2}} \leq \frac{2y^2-y+\sqrt{2}}{2\sqrt{2}} $$
holds when
$$y \geq \frac{1}{4} + \frac{1}{4} \sqrt{17 - 8 \sqrt{2}} \approx 0.846.$$
In particular, it is satisfied for $y \geq 0.85$. Moreover, since $1-y(2-\sqrt{2}) \geq 1 - (2-\sqrt{2}) > 0$, the inequality
$$ \frac{y(\sqrt{2}-1)}{1-y(2-\sqrt{2})} \leq \frac{2y^2-y+\sqrt{2}}{2\sqrt{2}} $$
can be equivalently rewritten as
$$ p(y) := (4-2\sqrt{2}) y^3 +  (\sqrt{2}-4) y^2 + 3y - \sqrt{2} \leq 0. $$
It is easy to check that the derivative of this third degree polynomial $p$ is positive everywhere, so $p$ is increasing. Thus, it is enough to calculate directly that $p(0.85)$ is negative (it is approximately equal to $-0.013$). Hence, the inequality \eqref{eq:ineq_xy2} follows.

Now, to prove \eqref{eq:ineq_xy}, it is by \eqref{eq:ineq_xy2} and
$$x \leq \min \left\{ \frac{1}{\sqrt{2}}, \frac{y(\sqrt{2}-1)}{1-y(2-\sqrt{2})} \right\}$$
enough to verify it at $x = \frac{1}{\sqrt{2}}$ and $x=\frac{y(\sqrt{2}-1)}{1-y(2-\sqrt{2})}$, as the considered quadratic function is strictly decreasing in $x\in (-\infty,x^*(y)]$. For $x = \frac{1}{\sqrt{2}}$, the inequality \eqref{eq:ineq_xy} in $y$ can be rewritten as
$$(\sqrt{2}-1)(1-y)\left ( y - \frac{1}{\sqrt{2}} \right ) \geq 0,$$
which is true by the assumptions. For $x = \frac{y(\sqrt{2}-1)}{1-y(2-\sqrt{2})}$, the inequality \eqref{eq:ineq_xy} can be rewritten as
$$ 2 (\sqrt{2}-1)y(y-1)^2 \left( y-\frac{1}{\sqrt{2}} \right) \geq 0, $$
which is also true.

Finally, we see that equality holds if and only if $x \in \{ \frac{1}{\sqrt{2}}, \frac{y(\sqrt{2}-1)}{1-y(2-\sqrt{2})} \}$ and $y \in \{ \frac{1}{\sqrt{2}}, 1 \}$. For the resulting four pairs of values, it is straightforward to verify that only $(x,y) = ( \frac{1}{2}, \frac{1}{\sqrt{2}} )$ and $(x,y) = ( \frac{1}{\sqrt{2}},1 )$ satisfy all assumptions in the lemma, so the conclusion follows.
\end{proof}

The proof of Theorem~\ref{thm:1sym} is split into two lemmas. The following lemma establishes the inequality and part of the equality case. The full characterization of the equality case is covered by a second lemma below.

\begin{lem}
\label{lem:1symm_ineq}
Let $K, L \subseteq \R^2$ be $1$-symmetric convex bodies. Then
$$ d_{BM}(K, L) \leq \sqrt{2}, $$
and if equality holds, then one of $K$ and $L$ is a square.
\end{lem}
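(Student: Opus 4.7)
My plan is to normalize both $K$ and $L$ so that $(1, 0) \in \bd(K) \cap \bd(L)$, and denote by $k, l \in [1/2, 1]$ the diagonal parameters given by $(k, k) \in \bd(K)$ and $(l, l) \in \bd(L)$. I would then observe that any $1$-symmetric body $M$ with parameter $m$ is sandwiched between the hexagon $P_m := \conv\{(\pm 1, 0), (0, \pm 1), (\pm m, \pm m)\}$ and the octagon $Q_m := \CP^2 \cap 2m\CC^2$, giving $\|\cdot\|_{Q_m} \leq \|\cdot\|_M \leq \|\cdot\|_{P_m}$. Since the rotation $\varphi$ by $45^\circ$ preserves the $1$-symmetric class and sends the parameter $m$ to $1/(2m)$ after rescaling, I may (after swapping $K$ and $L$ and applying $\varphi$ to one of them if needed) reduce to the canonical configuration $k \in [1/2, 1/\sqrt{2}]$ and $l \in [1/\sqrt{2}, 1]$, matching the hypotheses of Lemma~\ref{lem:stab_ineq}.

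The key move is then to consider the candidate transformations $T = I$ and $T = \varphi$ and to take whichever gives the smaller bound. For $T = I$, the quantity to bound is
$$
d_{T=I}(K, L) = \Bigl(\sup_{x \neq 0} \frac{\|x\|_L}{\|x\|_K}\Bigr)\Bigl(\sup_{x \neq 0} \frac{\|x\|_K}{\|x\|_L}\Bigr),
$$
and by the sandwiching each supremum is dominated by an explicit polygon ratio such as $\sup_x \|x\|_{P_l}/\|x\|_{Q_k}$. Using $1$-symmetry, the analysis reduces to the angular interval $[0, \pi/4]$, and a direct inspection of the piecewise-linear descriptions of the relevant norms identifies the critical direction as the vertex angle $\arctan(2k - 1)$ of $Q_k$ (and analogously $\arctan(2l - 1)$ of $Q_l$ for the second factor). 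The analogous computation for $T = \varphi$ is obtained by substituting $l \leftrightarrow 1/(2l)$.

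The main obstacle is showing that one of these two bounds is at most $\sqrt{2}$ throughout the canonical configuration. Substituting the explicit boundary values at the critical direction and simplifying, the required inequality becomes (with $x = k$, $y = l$) exactly the statement of Lemma~\ref{lem:stab_ineq}; the hypothesis $x \leq y(\sqrt{2}-1)/(1 - y(2-\sqrt{2}))$ there carves out the sub-region of the canonical configuration where the $T = I$ estimate is the tight one. In the complementary sub-region one switches to $T = \varphi$, where after the substitution $l \to 1/(2l)$ one is back in the region to which Lemma~\ref{lem:stab_ineq} applies. Together these cases exhaust the canonical configuration and give the desired bound $d_{BM}(K, L) \leq \sqrt{2}$.

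For the equality case, if $d_{BM}(K, L) = \sqrt{2}$ then both supremum bounds above must be saturated, forcing equality in Lemma~\ref{lem:stab_ineq}. From that equality one obtains $(k, l) \in \{(1/2, 1/\sqrt{2}), (1/\sqrt{2}, 1)\}$. A parameter equal to $1/2$ forces the corresponding body to be $\CC^2$, since $P_{1/2} = Q_{1/2} = \CC^2$ collapse to the same body and the sandwich then pins $K$ (or $L$) down; similarly, a parameter equal to $1$ forces the body to be $\CP^2$. Either way, one of $K, L$ is a square, establishing the second assertion of the lemma.
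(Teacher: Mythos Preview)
There is a genuine gap in your approach: the polygon over-estimate that bounds each factor $\sup_x \|x\|_L/\|x\|_K$ and $\sup_x \|x\|_K/\|x\|_L$ \emph{independently} is too lossy to yield $\sqrt{2}$. When you replace $K$ by $Q_k$ in the first factor and by $P_k$ in the second, you decouple the two suprema from the single body $K$, and the resulting product can exceed $\sqrt{2}$ even inside your ``$T=I$'' sub-region. For a concrete instance take $(k,l)=(0.55,0.95)$: the condition $k \leq \tfrac{l(\sqrt{2}-1)}{1-l(2-\sqrt{2})}$ is satisfied (the right-hand side is about $0.89$), yet the second factor $\sup_x \|x\|_{P_{0.55}}/\|x\|_{Q_{0.95}}$ alone already exceeds $1.7$ (evaluate at the vertex $(1,0.9)$ of $Q_{0.95}$), so the product is well above $\sqrt{2}$. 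Thus your assertion that the $T=I$ bound ``becomes exactly the statement of Lemma~\ref{lem:stab_ineq}'' cannot be right as stated; the inequality you would actually need is strictly stronger than Lemma~\ref{lem:stab_ineq}. The switch to $T=\varphi$ does not save this either, since your description of which sub-region is handled by which transformation is tied to the same unverified identification.

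The paper circumvents this loss by a different normalization: rather than fixing both bodies to pass through $(1,0)$, it scales $L$ so that $L \subseteq K$ with $\bd(L)\cap\bd(K)\neq\emptyset$, and then shows directly $K \subseteq \sqrt{2}\,L$ via the single containment $v \in \conv\{0,\sqrt{2}w^1,\sqrt{2}w^2\}$. The touching condition is the essential coupling you are missing: it forces the diagonal parameter $\tau$ of $L$ to be large in terms of $\rho,\eta$ (inequality $\tau \geq \tfrac{\rho+\eta-2\rho\eta}{2(1-\rho)}$), and it is exactly this lower bound on $\tau$ that, after substitution, produces both the extra hypothesis $x \leq \tfrac{y(\sqrt{2}-1)}{1-y(2-\sqrt{2})}$ of Lemma~\ref{lem:stab_ineq} and reduces the needed inequality to it. In your setup no such coupling is available, so the hypothesis of Lemma~\ref{lem:stab_ineq} has no natural source and the sandwich bounds alone are not tight enough.
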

\begin{proof}
Since $K$ is $1$-symmetric, we may, by applying a rotation by $45^\circ$ if necessary, assume that
$$\|(\pm 1, 0)\|_K = \|(0, \pm 1)\|_K \leq \frac{1}{\sqrt{2}} \|(1, \pm 1)\|_K = \frac{1}{\sqrt{2}} \|(-1, \pm 1)\|_K$$
and similarly for $L$. Additionally, by applying appropriate dilatations, we can further assume that $\|(1,0)\|_K = 1$ and $L \subseteq K$ with $\bd(L) \cap \bd(K) \neq \emptyset$. Our goal is to prove under these assumptions that $K \subseteq \sqrt{2} L$, which then yields the desired inequality. Because of the $1$-symmetry of $K$ and $L$, it is enough to prove for $K' = \{ x \in K: \ x_1 \geq x_2 \geq 0\}$ and $L'=\{ x \in L: \ x_1 \geq x_2 \geq 0\}$ that $K' \subseteq \sqrt{2}L'$.

From the above assumptions it follows that $v^1 := (1,0), v^2 := (\rho,\rho) \in \bd(K)$ for some $\rho \in (0,\frac{1}{\sqrt{2}}]$ and $w^1 := (\eta,0), w^2 := (\tau,\tau) \in \bd(L)$ for some $\eta \in (0,1]$ and $\tau \in (0,\min \{ \frac{\eta}{\sqrt{2}},\rho \}]$. Since $K$ and $L$ are $1$-symmetric, lines perpendicular to $(1,0)$ support $K$ and $L$ at $v^1$ and $w^1$, respectively, and similarly lines perpendicular to $(1,1)$ support $K$ and $L$ at $v^2$ and $w^2$, respectively. The two respective lines that support $K$ intersect in the point $v := (1, 2\rho - 1)$, and the ones supporting $L$ intersect in $w := (\eta, 2\tau - \eta)$. Thus, $\rho \geq \frac{1}{2}$ and $\tau \geq \frac{\eta}{2}$. Moreover,
$$ \conv \{ 0, v^1, v^2 \} \subseteq K' \subseteq \conv \{ 0, v^1, v^2, v \} $$
and
$$ \conv\{ 0, w^1, w^2 \} \subseteq L' \subseteq \conv \{ 0, w^1, w^2, w \} $$
(cf.~Figure~\ref{fig:1symm_general_situation}).
\newpage

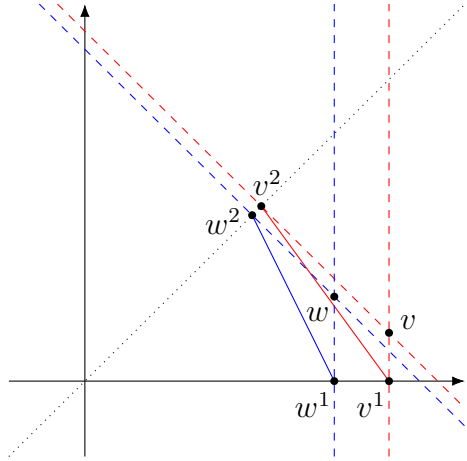
\begin{figure}[ht]
\def\r{0.58}
\def\e{0.82}
\def\t{0.55}
\def\scale{4}
\centering
\begin{tikzpicture}[scale=\scale]
\draw[->] (-0.25,0) -- (1.25,0);
\draw[->] (0,-0.25) -- (0,1.25);
\draw[dotted] (-0.25,-0.25) -- (1.25,1.25);

\draw[red] (1,0) -- (\r,\r);
\draw[red,dashed] (1,-0.25) -- (1,1.25);
\draw[red,dashed] (2*\r-1.25,1.25) -- (1.25,2*\r-1.25);

\draw[blue] (\e,0) -- (\t,\t);
\draw[blue,dashed] (\e,-0.25) -- (\e,1.25);
\draw[blue,dashed] (2*\t-1.25,1.25) -- (1.25,2*\t-1.25);

\fill (1,0) circle(\ds);
\node[anchor=north east] at (1.025,0) {$v^1$};
\fill (\r,\r) circle(\ds);
\node[anchor=south] at (\r+0.02,\r) {$v^2$};
\fill (1,2*\r-1) circle(\ds);
\node[anchor=south west] at (1,2*\r-1.025) {$v$};

\fill (\e,0) circle(\ds);
\node[anchor=north east] at (\e+0.025,0) {$w^1$};
\fill (\t,\t) circle(\ds);
\node[anchor=east] at (\t,\t-0.02) {$w^2$};
\fill (\e,2*\t-\e) circle(\ds);
\node[anchor=north east] at (\e+0.02,2*\t-\e) {$w$};
\end{tikzpicture}
\caption{
An example of the situation in the proof of Lemma~\ref{lem:1symm_ineq}: The blue (resp.~red) solid segment belongs to $L$ (resp.~$K$), whereas the blue (resp.~red) dashed lines support $L$ (resp.~$K$). Since $\bd(K) \cap \bd(L) \neq \emptyset$, $w$ cannot lie in the interior of $K$ and must therefore lie to the top right of the red segment.
}
\label{fig:1symm_general_situation}
\end{figure}

To prove the inclusion $K' \subseteq \sqrt{2}L'$, it is enough to show that $v \in \sqrt{2}L$, as then by $1$-symmetry of $\sqrt{2}L$ we would also have $v^1, v^2 \in \sqrt{2}L$ and in consequence
$$K' \subseteq \conv\{ 0, v^1, v^2, v \} \subseteq \sqrt{2}L.$$
It is thus sufficient to verify that $v \in \conv\{ 0, \sqrt{2}w^1, \sqrt{2}w^2 \}$. In other words, we want to write $v$ in the form $\lambda_1w^1 + \lambda_2w^2$ with $\lambda_1, \lambda_2 \geq 0$ and $\lambda_1+\lambda_2 \leq \sqrt{2}$. Clearly, if 
$$v=(1, 2\rho-1)=\lambda_1w^1+\lambda_2w^2 = (\lambda_1 \eta + \lambda_2 \tau, \lambda_2 \tau),$$
then $\lambda_1=\frac{2-2\rho}{\eta}$ and $\lambda_2 = \frac{2\rho - 1}{\tau}$. Such $\lambda_1, \lambda_2$ are always non-negative by $\rho \in [\frac{1}{2},\frac{1}{\sqrt{2}}]$, and the condition $\lambda_1 + \lambda_2 \leq \sqrt{2}$ can be rewritten as
\begin{equation}
\label{eq:ineq_lambda}
\frac{(2\rho-1)\eta}{\tau} \leq \sqrt{2}\eta + 2 \rho - 2.
\end{equation}

To establish \eqref{eq:ineq_lambda}, we shall use the fact that $\bd(L) \cap \bd(K) \neq \emptyset$. In particular, the $1$-symmetry of $K$ and $L$ shows $w \not \in \inte(\conv\{ 0, v^1, v^2 \})$. This means that if we write $w$ as $w=\mu_1v^1 + \mu_2v^2$ with $\mu_1, \mu_2 \geq 0$, then $\mu_1 + \mu_2 \geq 1$. It is easy to calculate that $\mu_1 = 2 \eta - 2 \tau \geq 0$ and $\mu_2 = \frac{2\tau - \eta}{\rho} \geq 0$, so
\begin{equation}
\label{eq:ineq_tau}
\tau \geq \frac{\rho + \eta - 2\rho \eta}{2(1-\rho)}.
\end{equation}
Since $\rho \geq \frac{1}{2}$, $\eta \leq 1$, and $\tau \leq \frac{\eta}{\sqrt{2}}$ this yields
$$ \frac{1}{2} = \frac{\rho - (2 \rho - 1)}{2 (1 - \rho)} \leq \frac{\rho - \eta (2 \rho - 1)}{2 (1 - \rho)} = \frac{\rho + \eta - 2\rho \eta}{2(1-\rho)} \leq \frac{\eta}{\sqrt{2}}$$
and in particular $\eta \geq \frac{1}{\sqrt{2}}$. Furthermore, the last inequality can be rewritten as
$$ \rho \leq \frac{\eta (\sqrt{2} - 1)}{1 - \eta (2 - \sqrt{2})}. $$
By \eqref{eq:ineq_tau}, we have now
\begin{equation}
\label{eq:ineq_eq}
\frac{(2\rho-1)\eta}{\tau} \leq \frac{2\eta(2\rho-1)(1-\rho)}{\rho + \eta - 2\rho \eta}
\end{equation}
(we note that $\rho + \eta - 2\rho \eta=(\rho-\eta)^2 + \rho(1-\rho) + \eta(1-\eta)>0$). To establish \eqref{eq:ineq_lambda}, it is in summary enough to prove
$$\frac{\sqrt{2}\eta(2\rho-1)(1-\rho)}{\rho + \eta - 2\rho \eta} \leq \eta + \sqrt{2} \rho - \sqrt{2}.$$
This follows directly from Lemma~\ref{lem:stab_ineq}, as all the assumptions are met. This concludes the proof of the inclusion $K \subseteq \sqrt{2}L$.

Assume now that the equality $d_{BM}(K,L) = \sqrt{2}$ holds. We proved before that $v$ belongs to the triangle with vertices $\{0, \sqrt{2}w^1, \sqrt{2}w^2\}$. If $d_{BM}(K,L) = \sqrt{2}$ holds, then $v$ clearly has to be on the side $[\sqrt{2}w^1, \sqrt{2}w^2]$, as otherwise $v$ would be in the interior of $L$ and thus also $K \subseteq \inte(\sqrt{2}L)$ by $1$-symmetry of $K$ and $L$. In particular, we need to have equality in the estimate \eqref{eq:ineq_lambda}. Looking at the estimate \eqref{eq:ineq_eq}, we see that we need to have equality in the final estimate following from Lemma~\ref{lem:stab_ineq}. Therefore, by the equality condition in Lemma~\ref{lem:stab_ineq}, we have $(\rho, \eta)=\left ( \frac{1}{2}, \frac{1}{\sqrt{2}} \right )$ or $(\rho, \eta)=\left ( \frac{1}{\sqrt{2}}, 1 \right )$. In the first case, we have $v^2 = \frac{1}{2} (1,1) \in \bd(K)$. Since a line perpendicular to $(1,1)$ supports $K$ at $v^2$ and $v^1 = (1,0) \in K$, it follows that
$$ K'= \conv \left\{ 0, (1,0), \frac{1}{2} (1,1) \right\}. $$
The $1$-symmetry of $K$ now shows $K = \CC^2$, so $K$ is a square in this case and the conclusion follows.

In the second case, the equality in \eqref{eq:ineq_eq} implies $\tau=\frac{1}{2}$. Therefore, we have $w^1 = (1,0) \in L$ and $w^2 = \frac{1}{2} (1,1) \in \bd(L)$. The conclusion follows now in the same way as before, this time showing that $L=\CC^2$ is a square. This finishes the proof. 
\end{proof}

Let us point out another direct consequence of the above proof for the equality case.  From the conditions on $\rho$, $\eta$, and $\tau$ in the equality case it follows that if $L$ is a square then $K$ must satisfy
$$ \|(1,0)\|_K = \frac{1}{\sqrt{2}} \|(1,1)\|_K. $$
In other words, rotating $K$ by $45^\circ$ does not change the $K$-norm of the points on the coordinate axes and their angle bisectors. However, this condition is not enough to guarantee that a $1$-symmetric convex body $K$ has the distance $\sqrt{2}$ to the square. In the following lemma, we describe the equality condition fully.

\begin{lem}
\label{lem:1symm_equal_cond}
Let $K \subseteq \R^2$ be a $1$-symmetric convex body and let $\varphi \colon \R^2 \to \R^2$ be a rotation by $45^\circ$. Then $d_{BM}(K,\CP^2) = \sqrt{2}$ holds if and only if
\begin{equation}
\label{eq:1symm_equal_cond}
\| x\|_K \| x\|_{\varphi(K^\circ)} \geq \| x \|^2
\end{equation}
for every $x \in \R^2$.
\end{lem}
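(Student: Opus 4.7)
The plan is to prove both implications by contradiction, using explicit parallelogram constructions tailored to the failure of the opposite side.

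For the $(\Leftarrow)$ direction, I would combine condition \eqref{eq:1symm_equal_cond} with the bound $d_{BM}(K, \CP^2) \leq \sqrt{2}$ from Lemma~\ref{lem:1symm_ineq} and rule out $d_{BM}(K, \CP^2) < \sqrt{2}$. Given a parallelogram $P = T(\CP^2)$ with $P \subseteq K \subseteq sP$ and $s < \sqrt{2}$, the key algebraic identity is $\varphi + \varphi^{-1} = \sqrt{2}\, I_2$, which for $A := T^T T$ and $B := T^T \varphi^{-1} T$ gives $B + B^T = \sqrt{2}\, A$, hence $y^T B y = \tfrac{1}{\sqrt{2}} y^T A y$ for every $y \in \R^2$. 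I would then show via a short sign-matching case analysis (splitting on $y_1 y_2 = \pm 1$ and using the AM–GM bound $|\det T| \leq \sqrt{\det A} \leq \max(\beta + \gamma, \alpha - \gamma)$ for $A = \begin{pmatrix} \alpha & \gamma \\ \gamma & \beta \end{pmatrix}$) that some $y \in \{\pm 1\}^2$ satisfies $y_i (B y)_i \geq 0$ for both $i$, making Hölder's inequality an equality: $\|y\|_\infty \|By\|_1 = \tfrac{1}{\sqrt{2}} y^T A y$. Setting $x_0 := Ty$, one computes $\|x_0\|_P = 1$ and $\|x_0\|_{\varphi(P^\circ)} = \|T^T \varphi^{-1}(x_0)\|_1 = \|By\|_1$, so $\|x_0\|_P\|x_0\|_{\varphi(P^\circ)} = \tfrac{1}{\sqrt{2}} \|x_0\|^2$. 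The chain $\|x_0\|_K \leq \|x_0\|_P$ and $\|x_0\|_{\varphi(K^\circ)} \leq s\,\|x_0\|_{\varphi(P^\circ)}$ (from $P \subseteq K \subseteq sP$ and its polar form) then yields $\|x_0\|_K \|x_0\|_{\varphi(K^\circ)} \leq \tfrac{s}{\sqrt{2}} \|x_0\|^2 < \|x_0\|^2$, contradicting \eqref{eq:1symm_equal_cond}.

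For the $(\Rightarrow)$ direction, assume for contradiction that \eqref{eq:1symm_equal_cond} fails at some $x_0$. I would rescale so that $\|x_0\|_K = 1$, i.e., $x_0 \in \bd(K)$, and write $x_0 = \|x_0\|(\cos \alpha_0, \sin \alpha_0)$. The key construction is the Euclidean square $P := \conv\{\pm x_0, \pm R_{90^\circ}(x_0)\}$, whose diagonals are $\pm x_0$ and $\pm R_{90^\circ}(x_0)$. Since $R_{90^\circ}$ lies in the symmetry group of the $1$-symmetric body $K$, we have $R_{90^\circ}(x_0) \in K$ and hence $P \subseteq K$ by convexity. In the rotated frame $\tilde x := R_{-\alpha_0}(x)$, the square $P$ becomes $R_{\alpha_0}(\|x_0\|\,\CC^2)$, so $\|x\|_P = (|\tilde x_1| + |\tilde x_2|)/\|x_0\|$. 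Using the identity $|a| + |b| = \max(|a+b|, |a-b|)$ together with $\tilde x_1 \pm \tilde x_2 = \sqrt{2}\langle x, \hat u_\pm\rangle$, where $\hat u_\pm$ are unit vectors at angles $\alpha_0 \pm \pi/4$, the maximum of $\|x\|_P$ over $K$ equals $(\sqrt{2}/\|x_0\|)\max(h_K(\hat u_+), h_K(\hat u_-))$. Since $\hat u_+$ and $\hat u_-$ differ by $90^\circ$ and $K$ is invariant under $R_{90^\circ}$, we have $h_K(\hat u_+) = h_K(\hat u_-)$. The failure of \eqref{eq:1symm_equal_cond}, rewritten using $\|x_0\|_{\varphi(K^\circ)} = \|\varphi^{-1}(x_0)\|_{K^\circ} = \|x_0\|\,h_K(\hat u_-)$, reads precisely $h_K(\hat u_-) < \|x_0\|$. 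Thus $K \subseteq sP$ for $s := \sqrt{2}\,h_K(\hat u_+)/\|x_0\| < \sqrt{2}$, giving $d_{BM}(K, \CP^2) \leq s < \sqrt{2}$, a contradiction.

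The main obstacle is guessing the right parallelogram in the $(\Rightarrow)$ direction. A naive construction built from $x_0$ and $\varphi^{-1}(x_0)$ would fail because these two vectors are not orthogonal, so the resulting parallelogram has no clean polar description. Using $R_{90^\circ}(x_0)$ instead exploits the $1$-symmetry of $K$ to produce a genuine Euclidean square, after which $\|\cdot\|_P$ decomposes transparently through the orthogonal pair $\hat u_\pm$; the equality $h_K(\hat u_+) = h_K(\hat u_-)$ forced by $1$-symmetry is then exactly what is needed to convert the condition failure into the strict bound $s < \sqrt{2}$.
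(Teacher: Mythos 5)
Your proposal is correct. The forward direction is essentially the paper's argument in contrapositive form: both rest on the inscribed square $P_{x_0} = \conv\{\pm x_0, \pm\varphi^2(x_0)\} \subseteq K$ and on the fact that the smallest dilate of $P_{x_0}$ containing $K$ is $\frac{\sqrt{2}}{\|x_0\|} h_K(\hat u_\pm)$, where $90^\circ$-invariance of $K$ makes the two directions interchangeable; the paper phrases this via a contact point $y \in K \cap \bd(\sqrt{2} P_{x_0})$, you phrase it as $K \subseteq s P_{x_0}$ with $s < \sqrt{2}$. The reverse direction, however, is genuinely different. The paper argues geometrically: from $P_x \subseteq K$ it deduces that every $p \in K$ satisfies $|\langle \varphi(x), p\rangle| \leq \beta_x$ or $|\langle \varphi^3(x), p\rangle| \leq \beta_x$ (a quadrant argument), and then evaluates condition \eqref{eq:1symm_equal_cond} at a vertex of the optimal parallelogram lying on $\bd(K)$. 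You instead establish an intrinsic fact about parallelograms, independent of $K$: every $P = T(\CP^2)$ has a vertex $x_0 = Ty$ with $\|x_0\|_P \|x_0\|_{\varphi(P^\circ)} = \tfrac{1}{\sqrt{2}} \|x_0\|^2$, via $\varphi + \varphi^{-1} = \sqrt{2}\, I_2$ and the equality case of H\"older at a suitable sign vector; sandwiching $P \subseteq K \subseteq sP$ then contradicts \eqref{eq:1symm_equal_cond} when $s < \sqrt{2}$. This isolates the constant $\tfrac{1}{\sqrt{2}}$ as a property of parallelograms under $45^\circ$ rotation and avoids the paper's need to place a vertex of the optimal parallelogram on $\bd(K)$. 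One detail worth spelling out in the sign-matching step: writing $c = \tfrac{\det T}{\sqrt{2}}$ for the off-diagonal entry of the antisymmetric part of $B$, the two candidate sign vectors give the admissible ranges $\det T \in [-(\alpha+\gamma), \beta+\gamma] \cup [-(\beta-\gamma), \alpha-\gamma]$; these intervals always overlap, so their union is $[-\max(\alpha+\gamma, \beta-\gamma), \max(\beta+\gamma, \alpha-\gamma)] \supseteq \left[-\tfrac{\alpha+\beta}{2}, \tfrac{\alpha+\beta}{2}\right]$, which contains $\det T = \pm\sqrt{\alpha\beta - \gamma^2}$ by AM--GM exactly as you indicate. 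With that made explicit, the argument is complete.
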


We again note that the rotation direction of $\varphi$ does not matter here. planar $1$-symmetric convex bodies are invariant under rotation by $90^\circ$, so rotating $K^\circ$ by $45^\circ$ in either direction yields the same result.

\begin{proof}
First, assume $d_{BM}(K,\CP^2) = \sqrt{2}$. By homogeneity, it suffices to prove the required inequality for arbitrary $x \in \bd(K)$. To this end, we define the square
$$ P_x := \conv\{ x, \varphi^2(x), \varphi^4(x), \varphi^6(x) \}. $$
Since $K$ is $1$-symmetric, it is invariant under rotation by $90^\circ$, i.e., under $\varphi^2$. Therefore, we have $P_x \subseteq K$. Now, $d_{BM}(K,\CP^2) = \sqrt{2}$ implies $K \not\subseteq \inte(\sqrt{2} P_x)$, so by convexity of $K$, there must exist some $y \in \bd(\sqrt{2} P_x) \cap K$. Using the $1$-symmetry of $K$ again, we may assume $y \in \sqrt{2} [x,\varphi^6(x)]$ and observe that
\begin{equation}
\label{eq:P_x_inner_rep}
\langle \varphi^{-1}(x), \varphi^6(x) \rangle = \langle x, \varphi^{-1}(x) \rangle = \|x\| \|\varphi^{-1}(x)\| \cos(45^\circ) = \frac{\|x\|^2}{\sqrt{2}} =: \beta_x.
\end{equation}
Thus, by using the fact that $y$ lies on the segment $\sqrt{2} [x,\varphi^6(x)]$, we get
$$ \|x\|_{\varphi(K^\circ)} = \|\varphi^{-1}(x)\|_{K^\circ} \geq \langle y, \varphi^{-1}(x) \rangle = \beta_x \sqrt{2} = \|x\|^2. $$
Hence, the required inequality is proved.

Now, assume that $K$ satisfies \eqref{eq:1symm_equal_cond}. We observe for $x \in \bd(K)$ that $\inte(P_x) \subseteq \inte(K)$. Moreover, \eqref{eq:P_x_inner_rep} shows
$$ P_x = \left\{ y \in \R^2 : |\langle \varphi^k(x), y \rangle| \leq \beta_x, \: k=1,3 \right \}, $$
so for $k \in \{0, 2, 4, 6\}$ there is no intersection point of $K$ with the set
$$ \varphi^k(x) + (0,\infty) (\varphi^k(x) - \inte(P_x)) =\left\{ y \in \R^2 : \langle \varphi^{k-1}(x),y \rangle > \beta_x \: \text{ and } \: \langle \varphi^{k+1}(x), y \rangle > \beta_x \right\}, $$
as this would contradict $\varphi^k(x) \in \bd(K)$ (cf.~Figure~\ref{fig:1symm_equal_cond}). Hence, any point $p \in K$ satisfies
\begin{equation}
\label{eq:P_x_cond}
|\langle \varphi(x), p \rangle| \leq \beta_x
\quad \text{or} \quad
|\langle \varphi^3(x), p \rangle| \leq \beta_x.
\end{equation}

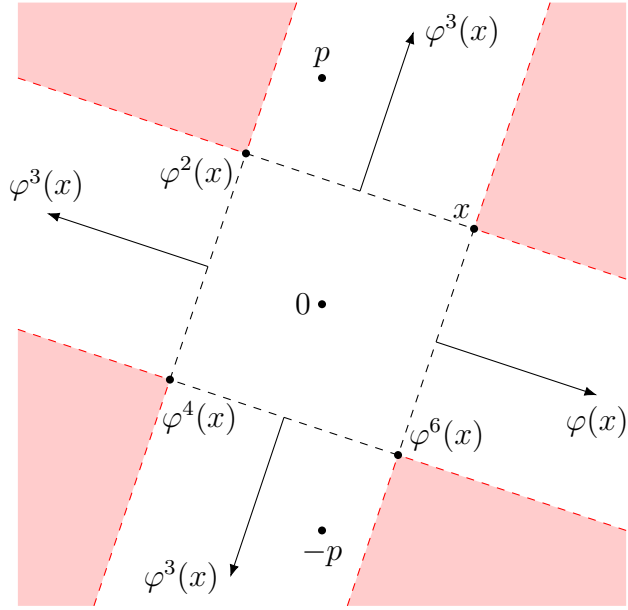
\begin{figure}[ht]
\def\scale{0.9}
\centering
\begin{tikzpicture}[scale=\scale]
\fill[red!20] (3,4) -- (2,1) -- (4,1/3) -- (4,4) -- cycle;
\fill[red!20] (-4,3) -- (-1,2) -- (-1/3,4) -- (-4,4) -- cycle;
\fill[red!20] (-3,-4) -- (-2,-1) -- (-4,-1/3) -- (-4,-4) -- cycle;
\fill[red!20] (4,-3) -- (1,-2) -- (1/3,-4) -- (4,-4) -- cycle;

\draw[dashed,red] (3,4) -- (2,1);
\draw[dashed] (2,1) -- (1,-2);
\draw[dashed,red] (1/3,-4) -- (1,-2);

\draw[dashed,red] (-4,3) -- (-1,2);
\draw[dashed] (-1,2) -- (2,1);
\draw[dashed,red] (4,1/3) -- (2,1);

\draw[dashed,red] (-3,-4) -- (-2,-1);
\draw[dashed] (-2,-1) -- (-1,2);
\draw[dashed,red] (-1/3,4) -- (-1,2);

\draw[dashed,red] (4,-3) -- (1,-2);
\draw[dashed] (1,-2) -- (-2,-1);
\draw[dashed,red] (-4,-1/3) -- (-2,-1);

\draw[->] (3/2,-1/2) -- ({3/2+3/sqrt(2)},{-1/2-1/sqrt(2)});
\node[anchor=north] at ({3/2+3/sqrt(2)-0.25},{-1/2-1/sqrt(2)}) {$-\varphi^3(x)$};
\draw[->] (1/2,3/2) -- ({1/2+1/sqrt(2)},{3/2+3/sqrt(2)});
\node[anchor=west] at ({1/2+1/sqrt(2)},{3/2+3/sqrt(2)}) {$\varphi(x)$};
\draw[->] (-3/2,1/2) -- ({-3/2-3/sqrt(2)},{1/2+1/sqrt(2)});
\node[anchor=south] at ({-3/2-3/sqrt(2)},{1/2+1/sqrt(2)}) {$\varphi^3(x)$};
\draw[->] (-1/2,-3/2) -- ({-1/2-1/sqrt(2)},{-3/2-3/sqrt(2)});
\node[anchor=east] at ({-1/2-1/sqrt(2)},{-3/2-3/sqrt(2)}) {$-\varphi(x)$};

\fill (2,1) circle(\ds);
\node[anchor=south east] at (2.1,1) {$x$};
\fill (-1,2) circle(\ds);
\node[anchor=north east] at (-1,2.1) {$\varphi^2(x)$};
\fill (-2,-1) circle(\ds);
\node[anchor=north west] at (-2.25,-1.15) {$\varphi^4(x)$};
\fill (1,-2) circle(\ds);
\node[anchor=south west] at (1,-2.1) {$\varphi^6(x)$};

\fill (0,3) circle(\ds) node[anchor=south] {$p$};
\fill (0,-3) circle(\ds) node[anchor=north] {$-p$};

\fill (0,0) circle(\ds) node[anchor=east] {$0$};
\end{tikzpicture}
\caption{
An example of the situation in the proof of Lemma~\ref{lem:1symm_equal_cond}: The black dashed square $P_x$ is a subset of $K$. Thus, no point $p \in K$ can lie in one of the open red areas (with red dashed boundaries), as this would contradict $x \in \bd(K)$. Hence, any point $p \in K$ must lie between at least one pair of dashed parallel lines.
}
\label{fig:1symm_equal_cond}
\end{figure}

Now, let $P$ be an origin symmetric parallelogram that satisfies for $d = d_{BM}(K, \CP^2)$ that
$$ P \subseteq K \subseteq d P. $$
Then there exist $x \in \bd(K)$ and $p \in K$ such that $P = \conv\{ \pm x, \pm p \}$. By \eqref{eq:P_x_cond}, there further exists $k \in \{1,3\}$ with $|\langle \varphi^k(x), p \rangle| \leq \beta_x$ and thus $\langle \varphi^k(x), y \rangle \leq \beta_x$ for all $y \in P$. Consequently, we have $\|\varphi^k(x)\|_{P^\circ} \leq \beta_x$. Since $K \subseteq d P$ implies $\| \cdot \|_{K^\circ} \leq d \| \cdot \|_{P^\circ}$, we obtain from the condition \eqref{eq:1symm_equal_cond} and $\|\varphi^{k+1}(x)\|_K = 1$ by $1$-symmetry of $K$ that
$$ \frac{\|x\|^2}{\sqrt{2}} = \beta_x \geq \| \varphi^k(x) \|_{P^\circ} \geq \frac{\| \varphi^k(x) \|_{K^\circ}}{d} = \frac{\| \varphi^{k+1}(x) \|_{\varphi(K^\circ)}}{d} \geq \frac{\| \varphi^{k+1}(x) \|^2}{d} = \frac{\|x\|^2}{d}. $$
This rearranges to $d \geq \sqrt{2}$. Since $d \leq \sqrt{2}$ is also true by Lemma~\ref{lem:1symm_ineq}, we obtain the claimed equality $d=d_{BM}(K,\CP^2) = \sqrt{2}$.
\end{proof}

The two previous lemmas combined directly yield Theorem~\ref{thm:1sym}. The final goal of this section is to investigate condition \eqref{eq:1symm_equal_cond} further. Let us first give a more geometric interpretation of \eqref{eq:1symm_equal_cond} in the remark below.

\begin{remark}
\label{rem:1symm_cond}
We use the notation given in Lemma~\ref{lem:1symm_equal_cond}. Since $\varphi(K^\circ) = (\varphi(K))^\circ$, we can use the support function of $\varphi(K)$ to rewrite condition \eqref{eq:1symm_equal_cond} for $x \in \bd(K)$ as
$$ h_{\varphi(K)}(x) = \max \{ \langle x, y \rangle : y \in \varphi(K) \} \geq \| x \|^2 = \langle x, x \rangle. $$
In other words, if we draw a line perpendicular to $x$ through $x$ itself, then $\varphi(K)$ must contain some point in the halfspace bounded by this line and not containing the origin. This is in turn equivalent to $\varphi(K)$ containing a point $y$ such that $y = \lambda x$ for some $\lambda \in [1,\infty)$ or $\conv\{0,x,y\}$ is a triangle with a right or obtuse angle at $x$.
\end{remark}

With the above observation, it is immediately clear that $K$ being invariant under a rotation by $45^\circ$ is sufficient to imply $\eqref{eq:1symm_equal_cond}$.

\begin{cor}
Let $K \subseteq \R^2$ be a $1$-symmetric convex body that is invariant under rotation by $45^\circ$. Then
$$ d_{BM}(K,\CP^2) = \sqrt{2}. $$
\end{cor}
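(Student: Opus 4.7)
The plan is to reduce the corollary immediately to Lemma~\ref{lem:1symm_equal_cond}, which asserts that $d_{BM}(K,\CP^2) = \sqrt{2}$ if and only if the inequality $\|x\|_K \|x\|_{\varphi(K^\circ)} \geq \|x\|^2$ holds for every $x \in \R^2$. Thus our entire task is to verify this inequality under the additional assumption that $K$ is invariant under rotation by $45^\circ$, i.e., $\varphi(K) = K$.

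First I would observe that polarity commutes with linear isometries, so $\varphi(K^\circ) = (\varphi(K))^\circ$. Combining this with the hypothesis $\varphi(K) = K$ gives $\varphi(K^\circ) = K^\circ$, and hence $\|x\|_{\varphi(K^\circ)} = \|x\|_K^\circ$ for every $x \in \R^2$. Consequently, \eqref{eq:1symm_equal_cond} simplifies to the well-known inequality $\|x\|_K \|x\|_{K^\circ} \geq \|x\|^2$, which is highlighted in the paper just after the statement of Theorem~\ref{thm:1sym}.

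For completeness, I would briefly recall the one-line proof of this fact: by the very definition of the polar body and the support function, we have
\[
\|x\|_{K^\circ} = \sup_{y \in K} \langle x, y \rangle \;\geq\; \left\langle x, \frac{x}{\|x\|_K} \right\rangle = \frac{\|x\|^2}{\|x\|_K},
\]
since $\tfrac{x}{\|x\|_K} \in K$. Rearranging yields the desired inequality. Applying Lemma~\ref{lem:1symm_equal_cond} then gives $d_{BM}(K,\CP^2) = \sqrt{2}$.

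There is no real obstacle here; the corollary is essentially a direct specialization of Lemma~\ref{lem:1symm_equal_cond}. The only minor subtlety worth flagging is the point already noted after Lemma~\ref{lem:1symm_equal_cond}, namely that the rotation direction of $\varphi$ is irrelevant because a planar $1$-symmetric convex body is invariant under rotation by $90^\circ$; this is what makes the identification $\varphi(K^\circ) = K^\circ$ unambiguous once $\varphi(K) = K$.
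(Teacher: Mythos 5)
Your proposal is correct and follows essentially the same route as the paper: both reduce the corollary to Lemma~\ref{lem:1symm_equal_cond} and observe that $\varphi(K)=K$ turns condition \eqref{eq:1symm_equal_cond} into the standard polar inequality $\|x\|_K\|x\|_{K^\circ}\geq\|x\|^2$ (the paper phrases this via the support-function reformulation of Remark~\ref{rem:1symm_cond}, which is the same computation). Only a cosmetic slip: you wrote $\|x\|_K^\circ$ where you meant $\|x\|_{K^\circ}$.
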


In particular, the above corollary generalizes a result by Lassak in \cite{lassak}, where it is proved that regular $8j$-gons, $j \in \N$, are at Banach-Mazur distance $\sqrt{2}$ from $\CP^2$. It is now natural to ask if the reverse implication is also true, that is, if condition \eqref{eq:1symm_equal_cond} is for a $1$-symmetric convex body actually equivalent to being invariant under rotation by $45^\circ$. However, Example~\ref{ex:stab_equal_cond_bodies} below shows that this is not the case in general. Moreover, because there is quite a lot of freedom in choosing the convex curve $\gamma$ in the construction provided below, it seems rather unlikely that condition \eqref{eq:1symm_equal_cond} could be expressed in some much simpler way.

\begin{exam}
\label{ex:stab_equal_cond_bodies}
Let $a = (1,0)$, $b = \frac{1}{\sqrt{2}}(1,1)$, and choose some $v \in \frac{a}{2} + \frac{1}{2} \B^2$ that also lies in the interior of the triangle
$$ T := \conv \left\{ a, b, \frac{a + b}{\sqrt{2}} \right\} $$
(cf.~Figure~\ref{fig:non-symmetric_example_1-symm}). Moreover, let $\gamma \colon [0,1] \to T$ be any convex curve with $\gamma(0) = a$, $\gamma(1) = b$, $\gamma(\frac{1}{4}) = v$, such that for any $t \in [0,\frac{1}{4}]$, $\gamma(t) \in \frac{a}{2} + \frac{1}{2} \B^2$ and $\gamma(1-t) \in \frac{b}{2} + \frac{1}{2} \B^2$, and for any $t \in [\frac{1}{4},\frac{1}{2}]$, $\gamma(t)$ is the reflection of $\gamma(1-t)$ at the line $V := \lin\{ a+b \}$. It is clear that there exists a unique planar $1$-symmetric convex body $K \subseteq \R^2$ whose boundary between $a$ and $b$ coincides with the set $\gamma([0, 1])$.

We show that such $K$ satisfies the condition \eqref{eq:1symm_equal_cond}. To do so, it is by Remark~\ref{rem:1symm_cond} enough to show that if $x \in \gamma([0,1])$, then the rotation of $K$ by $45^\circ$ contains a point $y$ such that $y = x$ or $\conv \{0,x,y\} $ is a triangle with a right or obtuse angle at $x$. Note that rotating $K$ by $45^\circ$ or reflecting it at the line $V$ yields by the $1$-symmetry of $K$ the same result, so it suffices to find $z \in K$ such that its reflection $y$ at the line $V$ fulfills one of the above properties.

Now, let $t \in [0,1]$ be such that $x = \gamma(t)$. If $t \in [\frac{1}{4},\frac{3}{4}]$, then $z = \gamma(1-t)$ is by the choice of $\gamma$ the reflection of $x$ at $V$ and thus an eligible choice. If $t \in [0,\frac{1}{4}]$, then $x \in \frac{a}{2} + \frac{1}{2} \B^2$. Thus, $x = a$ or the triangle $\conv \{0,x,a\} $ has a right or obtuse angle at $x$ since $[0,a]$ is a diameter of $\frac{a}{2} + \frac{1}{2} \B^2$. In either case, we may choose $z = b$. Finally, if $t \in [\frac{3}{4},1]$, we can argue analogously with roles of $a$ and $b$ exchanged that $z = a$ is a possible choice.

\begin{figure}[ht]
\def\r{0.25}
\def\ang{22.5}
\def\scale{7.5}
\centering
\begin{tikzpicture}[scale=\scale]
\draw[dashdotted] (0,0) -- (1,{sqrt(2)-1});
\draw[dashdotted] (0,0) -- (1,0);
\draw[dashdotted] (0,0) -- ({1/sqrt(2)},{1/sqrt(2)});

\draw[dotted] (1,0) arc (0:180:0.5);
\draw[dotted] (0,0) arc (-135:45:0.5);

\draw[blue,dashed] (1,0) -- (1,{sqrt(2)-1});
\draw[blue,dashed] ({1/sqrt(2)},{1/sqrt(2)}) -- (1,{sqrt(2)-1});
\draw[blue,dashed] ({1/sqrt(2)},{1/sqrt(2)}) -- ({(1+sqrt(2))/sqrt(8)},{1/sqrt(8)});
\draw[blue,dashed] (1,0) -- ({(1+sqrt(2))/sqrt(8)},{1/sqrt(8)});

\draw[red] (1,0) -- (0.98,0.08) -- (0.9,0.28)
		-- ({(1.8*cos(\ang)+0.56*sin(\ang))*cos(\ang) - 0.9},{(1.8*cos(\ang)+0.56*sin(\ang))*sin(\ang) - 0.28}) -- ({1/sqrt(2)},{1/sqrt(2)});

\fill[red] (1,0) circle(\ds);
\node[anchor=west] at (1,0) {$a$};
\fill (0.5,0) circle(\ds) node[anchor=south] {$\frac{a}{2}$};

\fill[red] ({1/sqrt(2)},{1/sqrt(2)}) circle(\ds);
\node[anchor=south west] at ({1/sqrt(2)},{1/sqrt(2)}) {$b$};
\fill ({1/sqrt(8)},{1/sqrt(8)}) circle(\ds) node[anchor=north west] {$\frac{b}{2}$};

\fill (1,{sqrt(2)-1}) circle(\ds) node[anchor=south west] {$\frac{a+b}{\sqrt{2}}$};

\fill[red] (0.9,0.28) circle(\ds);
\node[anchor=south west] at (0.9,0.28) {$v$};
\fill[red] ({(1.8*cos(\ang)+0.56*sin(\ang))*cos(\ang) - 0.9},
            {(1.8*cos(\ang)+0.56*sin(\ang))*sin(\ang) - 0.28}) circle(\ds);
\fill[red] (0.98,0.08) circle(\ds);

\fill (0,0) circle(\ds) node[anchor=east] {$0$};
\end{tikzpicture}
\caption{
An example for the convex curve $\gamma$ (red) in Example~\ref{ex:stab_equal_cond_bodies}.
}
\label{fig:non-symmetric_example_1-symm}
\end{figure}
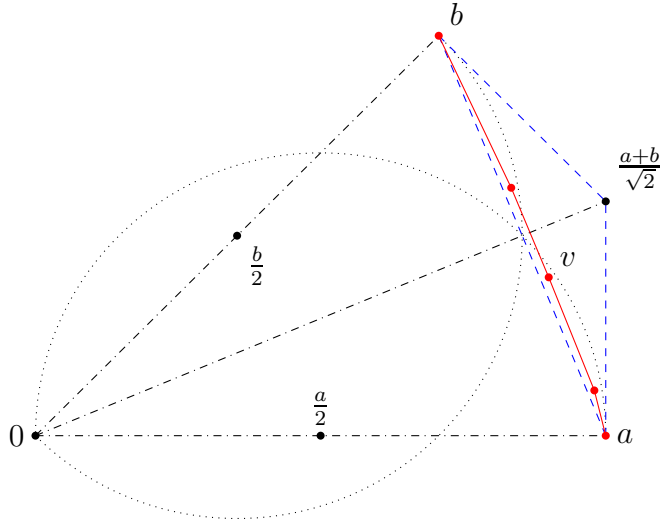

As seen in Figure~\ref{fig:non-symmetric_example_1-symm}, $K$ does not necessarily need to be invariant under reflection at the line $V$ (or equivalently under rotation by $45^\circ$) as $\gamma(t)$ does not need to be the reflection at $V$ of any point in $\gamma([0,1])$ for $t<\frac{1}{4}$.
\end{exam}
\newpage

We conclude the paper with the observation that the inequality
$$d_{BM}(K,L) \leq \sqrt{n}$$
is not true for all $1$-symmetric convex bodies $K,L \subseteq \R^n$ and arbitrary dimension $n$. It is shown in \cite{kobosvarivoda} that in the case $n = 3$ we have $d_{BM}(\CP^3, \CC^3) = \frac{9}{5} > \sqrt{3}$. Since the convex body $\CP^n$ (or $\CC^n$) is involved in all maximal distance situations that are considered throughout the present paper, it is perhaps natural to conjecture that $\frac{9}{5}$, which is the distance between $\CP^3$ and $\CC^3$, could be the maximal possible distance between two $1$-symmetric convex bodies in $\R^3$. However, we currently do not have much evidence to support such a conjecture.

\bigskip

\textsc{Department of Mathematics, Technical University of Munich, Germany} \\
\textit{E-mail address}: \textbf{florian.grundbacher@tum.de}.

\vskip 0.2in

\textsc{Faculty of Mathematics and Computer Science, Jagiellonian University in Cracow, Poland} \\
\textit{E-mail address}: \textbf{tomasz.kobos@uj.edu.pl}

\vfill\eject

\end{document}